\numberwithin{equation}{section}
\DeclareRobustCommand{\bbDelta}{{\mathpalette\bb@Delta\relax}}
\newcommand{\bb@Delta}[2]{%
  \begingroup
  \sbox\z@{$\m@th#1\Delta$}%
  \dimendef\Dht=6 \dimendef\Dwd=8
  \setlength{\Dwd}{\wd\z@}%
  \setlength{\Dht}{\ht\z@}%
  \begin{picture}(\Dwd,\Dht)
  \put(0,0){$\m@th#1\Delta$}
  \put(.42\Dwd,.7\Dht){\line(10,-26){.25\Dht}}
  \end{picture}%
  \endgroup
}
\numberwithin{equation}{section}
\newtheorem{thm}{Theorem}
\newtheorem{lemma}[thm]{Lemma}
\newtheorem{proposition}[thm]{Proposition}
\theoremstyle{definition}
\newtheorem{rmk}[thm]{Remark}
\newtheorem{define}[thm]{Definition}
\newcommand{\abs}[1]{\left|#1\right|}
\title[Geometric description of some Loewner chains]{Geometric description of some Loewner chains with infinitely many slits}
\author[Eleftherios Theodosiadis]{E. K. Theodosiadis}
\author[Konstantinos Zarvalis]{K. Zarvalis}
\address{Department of Mathematics, Stockholm University, 106 91 Stockholm, Sweden}
\email{eleftherios@math.su.se}
\address{Department of Mathematics, Aristotle University of Thessaloniki, 54124, Thessaloniki, Greece}
\email{zarkonath@math.auth.gr}
\date{\today}
\subjclass[2020]{Primary 30C45, 35C05; Secondary 30C85}
\keywords{Loewner equation, spirallike functions, harmonic measure.}
\begin{document}

\maketitle
\maketitle
\begin{abstract}
    We study the chordal Loewner equation associated with certain driving functions that produce infinitely many slits. Specifically, for a choice of a sequence of positive numbers $(b_n)_{n\ge1}$ and points of the real line $(k_n)_{n\ge1}$, we explicitily solve the Loewner PDE 
    $$ \dfrac{\partial f}{\partial t}(z,t)=-f'(z,t)\sum_{n=1}^{+\infty}\dfrac{2b_n}{z-k_n\sqrt{1-t}}$$
    in $\mathbb{H}\times[0,1)$. Using techniques involving the harmonic measure, we analyze the geometric behaviour of its solutions, as $t\rightarrow1^-$.
\end{abstract}
\section{Introduction}
	
Given an increasing family of slit domains $(H_t)_{0\le t<T}$ of the upper half-plane $\mathbb{H}$, that is $H_t:=\mathbb{H}\setminus\gamma([0,t])$ for a continuous curve $\gamma:[0,T]\to\overline{\mathbb{H}}$, with $\gamma(0)\in\mathbb{R}$ and $\gamma((0,T))\subset\mathbb{H}$, there exists a family of conformal maps $g_t=g(\cdot,t):H_t\overset{onto}{\longrightarrow}\mathbb{H}$, normalized in such a way that the \textit{hydrodynamic condition} $g_t(z)-z\rightarrow0$, as $z\rightarrow\infty$, is satisfied. Furthermore, an application of the Schwarz reflection principle, shows that $g_t$ has a Laurent expansion at infinity
 \begin{equation*}
     g_t(z)=z+\dfrac{b(t)}{z}+\cdots,
 \end{equation*}
 for all $z\in\mathbb{C}$ that lie outside a disc containing $\gamma([0,t])\cup\gamma([0,t])^*$, where $K^*$ denotes the reflection of $K$ with respect to the real axis. The coefficient $b(t)>0$ is called the \textit{half-plane capacity} of $\gamma([0,t])$, denoted by $\text{hcap}(\gamma([0,t]))$.
 Then, the \textit{chordal} Loewner differential equation reads for the initial value problem
 \begin{equation}\label{LE1}
    \dfrac{\partial}{\partial t}g_t(z)= \dfrac{b'(t)}{g_t(z)-\lambda(t)}, \quad g_0(z)=z,
 \end{equation}
for all $z\in H_t$ and $0\le t <T$, where $\lambda(t)$ is a continuous real-valued function of $t$, given as $\lambda(t)=g_t^{-1}(\gamma(t))$, called the \textit{driving function}. In the literature, the half-plane capacity mostly appears as $b(t)=2t$, which is possible by means of a time reparameterization. See \cite{mon} for a detailed derivation of equation (\ref{LE1}).

In the opposite direction, if we consider the initial value problem (\ref{LE1}), we then let $T_z$ be the supremum of all $t$, such that the solution to the equation is well defined and $g_t(z)\in\mathbb{H}$ for all $t\le T_z$. Then, the domain $H_t:=\{z\in\mathbb{H}: T_z>t\}$ is simply connected, $K_t:=\mathbb{H}\setminus H_t$ is compact and $\mathbb{H}\setminus K_t$ is also simply connected. Moreover, $g_t$ maps $H_t$ conformally onto $\mathbb{H}$, satisfying the hydrodynamic condition. We refer to $(K_t)_{t}$, as the \textit{compact hulls} generated by (\ref{LE1}); see \cite[Chapter 4]{lawl} for details. It is not true, in general, that an arbitrary continuous function $\lambda(t)$ produces hulls that are curves. Several authors have studied the relation between a driving function and the corresponding hulls, showing that \text{Lip}-$\frac{1}{2}$ driving functions, with sufficiently small $\text{Lip}(\frac{1}{2})$-norm, produce quasi-slit domains (see e.g. \cite{lind}, \cite{MR} and  \cite{slei1}).

Turning back to equation (\ref{LE1}), one can consider its multiple, finite or infinite, curve version. For instance, as we see in \cite{star}, given $n$ disjoint Jordan curves that emanate from the real line towards the upper half-plane, the corresponding Loewner flow is produced by the driving functions $\lambda_1,\dots,\lambda_n$. Furthermore, in \cite{tec}, the authors show that if the hulls are made up of infinitely many slits $\Gamma_n$ parameterized in $[0,1]$ such that each curve $\Gamma_j(t)$ can be separated from the closure of $\bigcup_{n\neq j}\Gamma_{j}(t)$ at each time $t$, by open sets, then the Loewner equation is written as 
 \begin{equation}\label{tecn}
     \dfrac{\partial g}{\partial t}
	 (z,t)=\sum_{n=1}^{+\infty}\dfrac{b_n(t)}{g(z,t)-\lambda_n(t)},
 \end{equation}
	for $z\in\mathbb{H}\setminus\bigcup_{n=1}^{+\infty}\Gamma_n(t)$ and a.e $0\le t\le1$, where $\sum_{n=1}^{+\infty}b_n(t)=\partial_t\textrm{hcap}(\bigcup_{n=1}^{+\infty}\Gamma_{n}(t))$.

To the authors' best knowledge, explicit solutions to the Loewner equation, which involve infinitely many slits, do not appear in the literature often. In this article, our goal is to present Loewner flows driven by infinitely many driving functions of the form $\lambda_n(t)=k_n\sqrt{1-t}$, by solving the corresponding differential equation. Single-slit versions have been studied in \cite{kad} and \cite{mar}, where the authors begin with the driving function $\lambda(t)=k\sqrt{1-t}$, and show that the geometry of the associated slit varies according to $k$. In \cite{lefteris}, the first author generalizes their result in the multi-slit version, by considering $n$ driving functions of the form $k_j\sqrt{1-t}$, to find four different geometric possibilities for the resulting slits. More specifically, they either \textit{spiral} about some point of the upper half-plane, or intersect the real line \textit{non-tangentially} (each slit by a different angle), \textit{tangentially} (all slits by the same angle, either $0$ or $\pi$) or \textit{orthogonally} (all slits by angle $\frac{\pi}{2}$). In this article, we shall extend the preceding result in the case of infinitely many slits and shall conclude that the same geometric possibilities occur.

To start with, we set our configuration by choosing a summable sequence $(b_n)_{n\ge1}$ of positive numbers and a sequence of distinct real points $(k_n)_{n\ge1}$ ordered in such a way that $\mathbb{R}$ can be written as a countable union of bounded intervals of the form $[k_m,k_{m'}]$, not containing in their interior any of the $k_n$'s, and unbounded intervals of the form $(-\infty,k_m]$ or $[k_{m'},+\infty)$, if such intervals exist, again not containing any other point $k_n$. We formally write the last condition for the sequence $(k_n)_{n\ge1}$ as

\begin{equation}\label{initial condition}
    \begin{split}
         \mathbb{R}=\overline{I_-}\cup\bigcup_{j=1}^{+\infty}\overline{I_j}\cup\overline{I_+},\hspace{1.5mm}\text{where $I_j$ are defined for all} \ j\ge1, \text{such that} \hspace{10mm}\\
         \text{there exists some}\hspace{1.5mm}j'\neq j: I_j=(k_j,k_{j'}),\hspace{1.5mm}\text{so that}\hspace{1.5mm}  k_n\notin I_j,  \forall n\ge1,\hspace{1.5mm}\hspace{8,5mm}\\
        I_-=(-\infty,\min_{n\ge1}{k_n})\hspace{1.5mm}\text{and}\hspace{1.5mm}I_+=(\max_{n\ge1}{k_n},+\infty),\hspace{1.5mm}\text{with}\hspace{1.5mm}k_n\notin I_{\pm}, \forall n\ge1.\hspace{5mm}\\
        \text{Furthermore, we assume that}\hspace{1.5mm} d:=\inf_{n\ge1}\abs{I_n}>0.\hspace{40mm}
    \end{split}
\end{equation}
Note that the left endpoint of $I_j$, determines its enumeration and note, also, that depending on the choice of the sequence $(k_n)_{n\ge1}$, either one of the two unbounded sets $I_-$ or $I_+$ might be empty. In particular, if $I_+$ is not empty, then the sequence $(k_n)_{n\ge1}$ has a maximum, say $k_N$. In this case, the interval $I_N$ as described above ceases to exist, but this omission does not impact our study.

Then, we write equation (\ref{tecn}) as
\begin{equation}\label{LE2}
    \dfrac{\partial g}{\partial t}
	 (z,t)=\sum_{n=1}^{+\infty}\dfrac{2b_n}{g(z,t)-k_n\sqrt{1-t}}
\end{equation}
	 with $g(z,0)=z$. The technique to solve this equation is straightforward and involves the transformation $\hat{g}(z,t)=\frac{g(z,t)}{\sqrt{1-t}}$, which, as we shall see later on, allows us to transform (\ref{LE2}) into the separable differential equation $\frac{d\hat{g}}{P_{\mathbb{H}}(\hat{g})}=\frac{dt}{2(1-t)}$, where
\begin{equation*}
    P_{\mathbb{H}}(z):=z+\sum_{n=1}^{+\infty}\dfrac{4b_n}{z-k_n}.
\end{equation*}

It turns out that the geometric properties of $g$ depend on the nature of the roots of the auxiliary function $P_{\mathbb{H}}$. We can already see that for each $m\in\mathbb{N}$ and for $x\in\mathbb{R}$, $\lim_{x\rightarrow k_m^+}P_{\mathbb{H}}(x)=+\infty$ and $\lim_{x\rightarrow k_m^-}P_{\mathbb{H}}(x)=-\infty$, which implies that there exists some $\lambda_{m}\in I_m$, real root of $P_{\mathbb{H}}$, so that $P'_{\mathbb{H}}(\lambda_m)<0$. For the sake of clarity, we proceed to the following definition.

\begin{define}\label{standardroots}
    We characterize any root $\rho\in\mathbb{R}$ of $P_{\mathbb{H}}$, satisfying $P_{\mathbb{H}}'(\rho)<0$, as a \textit{standard} root of $P_{\mathbb{H}}$. Via the note above, we may see that each bounded interval $I_m$ contains at least one standard root.
\end{define}
A basic part of our analysis is to decipher what other zeroes might exist. In Section 3, we uncover the properties of $P_{\mathbb{H}}$, which will allow us to determine exactly the possible additional roots, apart from the standard roots $\lambda_m$ mentioned above. For example, if we consider the case when only finitely many $b_n$'s are non-zero, then $P_{\mathbb{H}}$ is a rational function and thus its roots are determined by a polynomial of finite degree. Hence, with the use of the fundamental theorem of algebra, we can distinguish between four cases for the roots, namely one complex root (and its conjugate), or a multiple real root of degree 2 or 3, or distinct real roots; see \cite{lefteris} for details. In the case of infinitely many driving functions, the corresponding auxiliary function $P_{\mathbb{H}}$ is no longer rational and hence the aforementioned analysis no longer applies. Therefore, we come up with different strategies and we use complex analytic methods to deduce our results. It is our intention, at first, to prove that the same four cases are the only possibilities,  as they appear in the proposition below.

 \begin{proposition}
    \label{possible roots0}
    
    \textup{(1)} If there exists a complex root $\beta\in\mathbb{H}$ of $P_{\mathbb{H}}$, then $\beta, \bar{\beta}$ are the unique roots in $\mathbb{C}\setminus\mathbb{R}$, they are simple and each bounded interval $I_j$ (as described in (\ref{initial condition})) has exactly one root of $P_{\mathbb{H}}$, which is the standard root $\lambda_j$. If there exist unbounded intervals, they contain no real roots of $P_{\mathbb{H}}$.

    \textup{(2)} If some interval $I_j$ contains distinct simple real roots of $P_{\mathbb{H}}$, then $I_j$ has exactly three distinct roots, two of which are standard, if it is bounded, or two distinct roots, one of which is standard, if it is unbounded. Each other interval $I_k$ has exactly one root, which is the standard, if $I_k$ is bounded and none if $I_k$ is unbounded, and $P_{\mathbb{H}}$ has no complex roots.

    \textup{(3)} If there exists a multiple real root $\rho_0$ of $P_{\mathbb{H}}$, lying either in a bounded or in an unbounded interval (assuming that such an unbounded interval exists), then $P_{\mathbb{H}}$ has only real roots and we have the following cases:
        \begin{enumerate}
        \item[\textup{(a)}] either $\rho_0$ is a double root and each bounded interval $I_j$ has exactly one simple root, the standard root $\lambda_j$, while if there exist unbounded intervals, they contain no roots (except possibly $\rho_0$),

        \item[\textup{(b)}] or $\rho_0$ is a triple root which can only lie in some bounded interval $I_m$ and coincide with the standard root $\lambda_m$. Each other bounded interval $I_j$ has exactly one simple root, the standard root $\lambda_j$, while if there exist unbounded intervals, they contain no roots.
        \end{enumerate}
 
 \end{proposition}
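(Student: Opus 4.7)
The plan is to combine the Schwarz lemma applied to an auxiliary Herglotz function with a careful argument-principle count on a large upper half-disk. Set
\begin{equation*}
    G(z):=z-P_{\mathbb{H}}(z)=-\sum_{n\geq1}\frac{4b_n}{z-k_n};
\end{equation*}
for $z=x+iy\in\mathbb{H}$ one has $\operatorname{Im}G(z)=y\sum_{n}\frac{4b_n}{|z-k_n|^2}>0$, so $G$ is a holomorphic self-map of $\mathbb{H}$, and since $G(z)\to 0$ as $z\to\infty$ it is not the identity. Non-real zeros of $P_{\mathbb{H}}$ come in conjugate pairs by the symmetry $P_{\mathbb{H}}(\bar z)=\overline{P_{\mathbb{H}}(z)}$, and those in $\mathbb{H}$ are precisely the fixed points of $G$ there. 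By the Schwarz lemma applied at an interior fixed point, $G$ has at most one such $\beta\in\mathbb{H}$, with $|G'(\beta)|\leq 1$ and equality only in the automorphism case (which requires a single pole, $N=1$). In either case $G'(\beta)\neq 1$, so $P_{\mathbb{H}}'(\beta)=1-G'(\beta)\neq 0$ and $\beta$ is a simple zero; together with its conjugate this already proves the uniqueness and simplicity statements in~(1).

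To control the real zeros I apply the argument principle to $P_{\mathbb{H}}$ on the upper half-disk of radius $R$, indented by small upper semicircles above every pole $k_n\in[-R,R]$ and every real zero of $P_{\mathbb{H}}$ there. Choosing $R$ with $\operatorname{dist}(R,\{k_n\})\geq d/4$, which is possible thanks to the uniform gap $d>0$, a direct computation yields: contribution $\tfrac12$ from the large arc $C_R$ (because $P_{\mathbb{H}}(z)\sim z$ at infinity, with remainder controlled by $\sum b_n<\infty$), contribution $+\tfrac12$ from each pole indentation, and contribution $-\tfrac{m}{2}$ from each real zero of multiplicity $m$. The real segments contribute purely imaginary terms which must cancel since the total is a non-negative integer. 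The result is
\begin{equation*}
    2Z_{\mathbb{H}}(R)+R_{\mathbb{R}}(R)=1+M_R,
\end{equation*}
where $Z_{\mathbb{H}}(R)$, $R_{\mathbb{R}}(R)$, $M_R$ count, respectively, the zeros of $P_{\mathbb{H}}$ in the upper half-disk, the real zeros with multiplicity in $[-R,R]$, and the poles in $[-R,R]$. Combined with the elementary bound $R_{\mathbb{R}}(R)\geq M_R-1$ coming from Definition~\ref{standardroots}, this forces $Z_{\mathbb{H}}(R)\leq 1$ and a surplus $R_{\mathbb{R}}(R)-(M_R-1)=2-2Z_{\mathbb{H}}(R)\in\{0,2\}$.

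The proof now finishes by case analysis. If $Z_{\mathbb{H}}=1$ the surplus is $0$: each bounded $I_j$ contains exactly one real root (necessarily its standard root) and every unbounded interval is root-free, yielding~(1). If $Z_{\mathbb{H}}=0$ the surplus equals $2$; the parity constraints (odd number of sign-changing zeros on each bounded interval, where $P_{\mathbb{H}}$ sweeps from $+\infty$ to $-\infty$, and even on each unbounded interval, where $P_{\mathbb{H}}$ returns to the same signed infinity) exclude placing one simple extra in $I_-$ and one in $I_+$. The two extras must therefore concentrate in a single interval, and enumerating the ways this can happen gives exactly: three distinct simples in one bounded interval or two distinct simples in an unbounded one (case~(2)); a simple plus a double in a bounded interval, or a single double in an unbounded interval (case~(3a)); or a single triple root in a bounded interval (case~(3b); a triple in an unbounded interval would cost three units of surplus, which is impossible). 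A direct inspection of $P_{\mathbb{H}}'$ at each root in each configuration identifies which roots are standard and matches the counts asserted in the statement.

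The main obstacle I foresee is justifying the argument-principle identity rigorously in the infinite-pole setting: one must control $\sum\frac{4b_n}{z-k_n}$ uniformly on $C_R$ (via the uniform gap $d$ and the summability of $(b_n)$) so that the large-arc contribution really equals $\tfrac12$ in the limit, verify the cancellation of the imaginary parts along the real segments, and confirm that for $R$ sufficiently large every surplus real zero of $P_{\mathbb{H}}$ sits inside $[-R,R]$, making the asymptotic identity exact. Once these analytic points are secured, the sign-and-parity case work above completes the proof.
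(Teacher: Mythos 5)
Your proposal is correct in substance but follows a genuinely different route from the paper. Where you treat the non-real roots via the Schwarz lemma applied to the Herglotz self-map $G(z)=z-P_{\mathbb{H}}(z)$ of $\mathbb{H}$ (fixed points of $G$ are zeros of $P_{\mathbb{H}}$, so there is at most one in $\mathbb{H}$, and $G'(\beta)\neq1$ gives simplicity), the paper instead uses the partial-fraction identities of Lemma \ref{FGH}: once $w$, $\lambda_1,\lambda_2$ or $\lambda$ are roots, the quotients $F$, $H$, $G$ reduce to functions with strictly negative imaginary part on $\mathbb{H}$ and strictly negative derivative on $\mathbb{R}\setminus(k_n)_{n\ge1}$, which simultaneously excludes further non-real roots and caps each interval at one additional real root; the per-interval parity (one or three roots in a bounded interval, zero or two in an unbounded one) is supplied separately by Lemma \ref{roots in intervals} via $P_{\mathbb{H}}'''<0$ and Rolle. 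Your argument-principle identity $2Z_{\mathbb{H}}(R)+R_{\mathbb{R}}(R)=1+M_R$ replaces both of these steps with a single global count, and it buys you something the paper must prove separately (Proposition \ref{allroots}, that exactly one of the four configurations occurs, which the paper derives from the residue expansion and $\sum_n 1/P_{\mathbb{H}}'(\lambda_n)<0\neq1$); the price is the contour analysis you correctly flag, namely the uniform bound $|P_{\mathbb{H}}(z)-z|\le\frac{16}{d}\sum_n b_n$ on arcs kept at distance $\gtrsim d$ from the poles and the observation that the resulting half-integer identity, being exact up to an $o(1)$ error, is exact for large $R$ — estimates of exactly the kind the paper itself performs with the rectangles $L_n$ in Proposition \ref{possibilities}. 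Two small points to tighten: the real segments contribute nothing to $\Delta\arg P_{\mathbb{H}}$ simply because $P_{\mathbb{H}}$ is real and nonvanishing there (no cancellation argument is needed), and the passage from the identity along a sequence $R_j\to\infty$ to conclusions about every interval of $\mathbb{R}$ should be made explicit: $Z_{\mathbb{H}}(R)$ is nondecreasing and bounded by $1$, hence stabilizes, so the surplus is eventually constant and confined to one fixed interval.
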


We summarize by presenting the main result of this work. But before that, let us point out that it is useful to consider the inverse functions $f(\cdot,t):=f_t=g_t^{-1}:\mathbb{H}\overset{onto}{\longrightarrow} H_t$. It is then direct to see by equation (\ref{LE2}), that $f$ satisfies the PDE in $\mathbb{H}\times[0,1)$,
 
	      \begin{equation}\label{PDE1}
	          \dfrac{\partial f}{\partial t}(z,t)=-f'(z,t)\sum_{n=1}^{+\infty}\dfrac{2b_n}{z-k_n\sqrt{1-t}},
	      \end{equation}
 with initial value $f(z,0)=z$. We refer to the solution of the preceding equation as the \textit{chordal Loewner flow}. Having established all possibilities for the roots of $P_{\mathbb{H}}$ in Section 3, we then proceed to Section 4, where we explicitly solve equation $(\ref{PDE1})$ and visualize the geometry of the flow. In addition, we are going to describe its asymptotic behavior as $t\rightarrow1^-$. This behavior has been studied extensively in the past. However, in this article, we provide another approach towards the aim of tackling the problem. More specifically, we make use of a conformal invariant, the \textit{harmonic measure}. In fact, harmonic measure and its conformally invariant nature prove to be a powerful tool for studying how a trajectory behaves asymptotically.
 
 We are, now, ready to state our main result.

\begin{thm}\label{chordaltheorem}
Consider a summable sequence of positive numbers $(b_n)_{n\ge1}$ and a sequence of distinct real point $(k_n)_{n\ge1}$, satisfying condition $(\ref{initial condition})$. Then, the initial value problem $(\ref{PDE1})$ admits a unique solution in $\mathbb{H}\times[0,1)$ and we distinguish the following cases:
	      
	      \begin{enumerate}
	          \item[\textup{(1)}] If $P_{\mathbb{H}}$ has a complex root $\beta\in\mathbb{H}$, then the Loewner flow is of the form $$f(z,t)=h^{-1}\left((1-t)^{\alpha e^{-i\psi}}h\left((1-t)^{-\frac{1}{2}}z\right)\right),$$
	          where $h$ maps the upper half plane onto the complement of infinitely many logarithmic spirals of angle $-\psi$, where $\alpha$ and $\psi$ depend on the sequences $(k_n)_{n\ge1}$ and $(b_n)_{n\ge1}$.

	          \item[\textup{(2)}] If $P_{\mathbb{H}}$ has three distinct real roots, in some interval described in (\ref{initial condition}), then the Loewner flow is of the form $$f(z,t)=h^{-1}\left((1-t)^{\alpha}h\left((1-t)^{-\frac{1}{2}}z\right)\right),$$
	          where $h$ is a Schwarz-Cristoffel map of the upper half plane, that maps $\mathbb{H}$ onto $\mathbb{H}$ minus infinitely many line segments emanating from the origin.
	          \item[\textup{(3)}] If $P_{\mathbb{H}}$ has a multiple root, either double or triple, then the Loewner flow is of the form $$f(z,t)=h^{-1}\left(\dfrac{1}{2}\log(1-t)+h\left((1-t)^{-\frac{1}{2}}z\right)\right),$$
	          where $h$ is a univalent map of the upper half plane, that maps $\mathbb{H}$ onto:\\
	          \textup{(a)} either a horizontal half-plane, minus infinitely many half-lines parallel to $\mathbb{R}$, extending to the point at infinity from the left, if the root is double,\\
	          \textup{(b)} or the complement of infinitely many half-lines parallel to $\mathbb{R}$, extending to the point at infinity from the left, if the root is triple.  
	      \end{enumerate}
       
            Furthermore, for all $n\ge1$,  we have that the trajectories of the driving functions $\hat{\gamma}^{(n)}:=\{f(k_n\sqrt{1-t},t):\ t\in[0,1)\}$, are smooth curves of $\mathbb{H}$ starting at $k_n$, that spiral about the point $\beta$ in case $(1)$, intersect $\mathbb{R}$ at one of the real roots non-tangentially in case $(2)$ and intersect $\mathbb{R}$ at the multiple root tangentially in case $(3a)$ or orthogonally in case $(3b)$.	 
\end{thm}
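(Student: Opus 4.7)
The plan is to reduce the PDE (\ref{PDE1}) to a separable ODE via the substitution $\hat g(z,t) = g(z,t)/\sqrt{1-t}$, integrate it explicitly with an antiderivative $H$ of $1/P_{\mathbb{H}}$, and then reinterpret the resulting implicit formula through a case-dependent univalent function $h$ whose image realises the geometric picture stated in the theorem. Existence and uniqueness of the solution will then follow from the explicit formula together with the local Lipschitz regularity of the right-hand side on compact subsets of $\mathbb{H}$. A direct calculation, using $g-k_n\sqrt{1-t}=\sqrt{1-t}(\hat g-k_n)$, converts (\ref{LE2}) into
\[
\partial_t \hat g = \frac{P_{\mathbb{H}}(\hat g)}{2(1-t)},
\]
so separation and integration yield $H(\hat g(z,t))=H(z)-\tfrac12\log(1-t)$, and inverting $f=g^{-1}$ gives
\[
f(z,t) = H^{-1}\!\left(H((1-t)^{-1/2}z) + \tfrac12\log(1-t)\right).
\]
This matches case (3) upon setting $h=H$. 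For cases (1) and (2) we set $h=\exp(cH)$ for a suitable $c\in\mathbb{C}$: then $H^{-1}(w)=h^{-1}(e^{cw})$ and the additive $\tfrac12\log(1-t)$ becomes a multiplicative factor $(1-t)^{c/2}$, producing the stated forms with $c/2 = \alpha e^{-i\psi}$ in case (1) and $c/2 = \alpha$ in case (2).

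The main work is the construction of $h$ and the identification of its image. Proposition \ref{possible roots0} supplies the complete list of zeros of $P_{\mathbb{H}}$ and their multiplicities, while at each pole $k_n$ the function $1/P_{\mathbb{H}}$ is in fact analytic (the pole of $P_{\mathbb{H}}$ becomes a simple zero of $1/P_{\mathbb{H}}$), so $H$ extends analytically across every $k_n$ and has singularities only at the zeros of $P_{\mathbb{H}}$. Near a simple zero $\rho$ one has $H(z)\sim\log(z-\rho)/P_{\mathbb{H}}'(\rho)$, with analogous higher-order expansions at a double or triple root. In case (1), the choice $c=P_{\mathbb{H}}'(\beta)$ (complex, since $\beta\in\mathbb{H}$) makes $h$ conformal at $\beta$ with $h(\beta)=0$, and at each real standard root $\lambda_j$ the exponentiated logarithmic branch, with complex exponent $c/P_{\mathbb{H}}'(\lambda_j)$, produces a logarithmic spiral of angle $-\psi=\arg c$ about the origin in the image. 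In case (2), $c=P_{\mathbb{H}}'(\mu)$ is real positive, where $\mu$ is the non-standard root, so $h$ behaves as an infinite Schwarz--Christoffel-type map with $h(\mu)=0$ and with the standard roots being sent to infinity so as to trace out slits emanating from $0$ in $\mathbb{H}$. In case (3), $h=H$ has a pole of order one (resp.\ two) at the double (resp.\ triple) multiple root, producing the half-plane (resp.\ full-plane) image in (3a) (resp.\ (3b)), minus horizontal half-lines coming from the remaining simple real zeros. Univalence of $h$ on $\mathbb{H}$ and the precise boundary correspondence follow from an argument-principle count together with the observation that $H'=1/P_{\mathbb{H}}$ is real and of constant sign on each component of $\mathbb{R}\setminus\{k_n\}$ not containing a zero, so $H$ is monotone on such components; the summability $\sum b_n<\infty$ and the spacing condition $d=\inf_n|I_n|>0$ guarantee locally uniform convergence of the series defining $H$ on $\overline{\mathbb{H}}\setminus\{k_n\}$.

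For the asymptotic geometry, once $f$ is explicit, the trajectory $\hat\gamma^{(n)}(t)=f(k_n\sqrt{1-t},t)$ is the $h$-preimage of $h(k_n)$ moved by the appropriate dynamics in $h(\mathbb{H})$: multiplication by $(1-t)^{c/2}$ in cases (1)--(2) and horizontal translation by $\tfrac12\log(1-t)$ in case (3). In case (1), $(1-t)^{\alpha e^{-i\psi}}$ is a spiral contraction to $0=h(\beta)$, which pulls back through $h^{-1}$ to a curve spiralling about $\beta$. In case (2), real rescaling to $0$ pulls back to a curve landing non-tangentially at one of the real roots of $P_{\mathbb{H}}$ (with the angle read off from the exponent of $h$ at that root). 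In case (3), horizontal translation to $-\infty$ pulls back to a landing at the multiple root, which is tangential if the root is double (3a) and orthogonal if it is triple (3b), again because of the conformal order of $h$ at the multiple root. The rigorous statements about the type of approach are the cleanest through harmonic measure: conformality of $h$ transports harmonic measure, so a non-tangential, tangential or orthogonal approach of $\hat\gamma^{(n)}$ at the limit point is equivalent to the corresponding approach of its image in $h(\mathbb{H})$, which is elementary. The principal obstacle is Step~2: establishing univalence of $h$ and the exact identification of its image as an infinite-slit/spiral domain. This is precisely where the hypotheses $\sum b_n<\infty$ and $d>0$ must be exploited in full, and where the bulk of the technical effort resides.
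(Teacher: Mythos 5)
Your overall strategy coincides with the paper's: reduce to the separable ODE $\dd v/P_{\mathbb{H}}(v)=\dd t/(2(1-t))$, integrate against an antiderivative of $1/P_{\mathbb{H}}$, exponentiate in cases (1)--(2) to turn the additive dynamics into multiplicative ones, and read off the landing behaviour of the trajectories via harmonic measure. However, you have left the two genuinely hard steps unproved, and one of your claims is false as stated.

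First, univalence of $h$ and the identification of $h(\mathbb{H})$. Your proposed route (``argument-principle count'' plus monotonicity of $H$ on the real components) is not a proof: the candidate image is an infinite-slit or infinite-spiral domain whose boundary slits accumulate, so a degree/boundary-correspondence argument requires exactly the quantitative control you have not supplied. The paper instead proves univalence by three different closed-form criteria, each powered by the algebraic identities of Lemma \ref{FGH}: in case (1), $h$ satisfies the spirallikeness inequality of Theorem \ref{spirallikeinH} because $\mathrm{Im}\,F(z,\beta)<0$ on $\mathbb{H}$; in case (2), $\mathrm{Im}\bigl((h\circ T^{-1})'/(h\circ T^{-1})\bigr)$ has constant sign after conjugating by the M\"obius map $T(z)=(z-\rho_2)/(z-\rho_1)$, using $T(z)H(z,\rho_1,\rho_2)=G(z,\rho_1)$; in case (3), $(\phi\circ T^{-1})'$ has positive imaginary part, so univalence follows from convexity of $\mathbb{H}$. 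Moreover, to even write $h$ as a convergent infinite product or logarithmic series, and to show e.g. that $h(x)\to 0$ as $x\to\pm\infty$ in case (2) (so that the slits emanate from the origin) or that $\mathrm{Re}\,h(x)\to+\infty$ in case (3), the paper needs the Mittag--Leffler-type expansion of $1/P_{\mathbb{H}}$ over explicit contours (Theorem \ref{partial fractions expansion}) together with the normalization identities such as $\sum_n 1/P_{\mathbb{H}}'(\lambda_n)+1/P_{\mathbb{H}}'(\rho_1)+1/P_{\mathbb{H}}'(\rho_2)=1$; your local expansions of $H$ near each zero do not give this global boundary behaviour, and the identities are exactly what make quantities like $1-b-\sum_n a_n=P_{\mathbb{H}}'(\rho_1)$ positive in the key limits.

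Second, your assertion that conformal invariance of harmonic measure makes the angle of approach of $\hat\gamma^{(n)}$ ``equivalent to the corresponding approach of its image in $h(\mathbb{H})$'' is incorrect: conformal maps do not preserve angles at boundary points (and in cases (2)--(3) the image trajectory tends to the prime end at infinity, where no literal angle is being matched). What is actually conformally invariant is the harmonic measure of the two sides into which the trajectory splits the prime ends; the paper computes the limit of these harmonic measures in the image domain (which requires the uniform bounds on the tip points $h(k_j)$ from the image lemmas, domain monotonicity, and comparison with explicit model domains) and only then converts that limit into an angle via the level-set geometry of Remark \ref{cara}. This conversion step, not a direct transport of angles, is what yields the non-tangential angle $\pi-\arg h(k_n)$ in case (2), the tangential limit $1$ in case (3a), and the orthogonal limit $\tfrac12$ in case (3b). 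As written, your argument for the landing angles does not go through.
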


 \begin{figure}[h]
    \centering
    \includegraphics[scale=0.55]{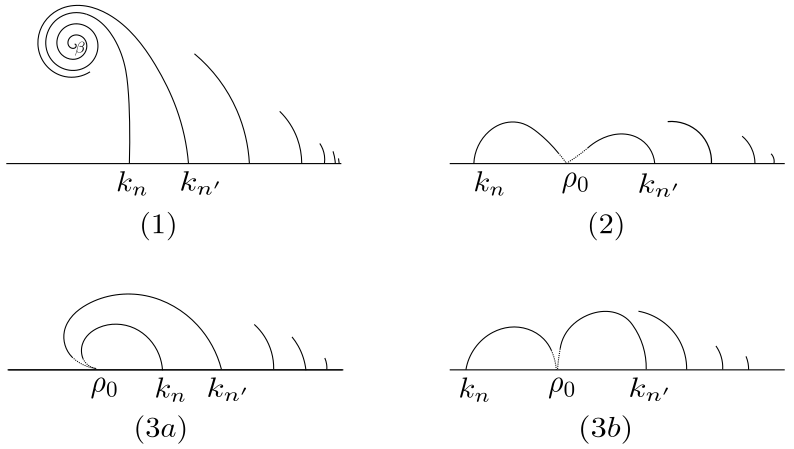}
    \caption{The geometric behaviour of the tip points $f(k_n\sqrt{1-t},t)$ for each case of Theorem \ref{chordaltheorem}.}
    \label{fig:cases}
\end{figure}

The structure of the article is as follows: In Section 2, we collect the preliminary results that will be needed during the course of the proofs. Then, in Section 3, we will proceed to a complete study of the function $P_\mathbb{H}$ and its nature by examining its roots. Finally, in Section 4, we shall state and prove a series of lemmas and propositions, whose combination will lead to Theorem \ref{chordaltheorem}.

\section{Basic tools and preliminaries}

\subsection{Theory of conformal mappings}

We begin by presenting some basic definitions about spirallike domains, that will be necessary for later. A \textit{logarithmic spiral of angle} $\psi\in(-\frac{\pi}{2},\frac{\pi}{2})$ in the complex plane, that joins the origin with infinity and passes from some point $w_0\neq0$, is defined as the curve with parameterization $S: w=w_0 \text{exp}(-e^{i\psi}t)$, $-\infty\le t\le+\infty$.
\begin{define}
\textup{(1)} A simply connected domain $D$, that contains the origin, is said to be $\psi$\textit{-spirallike (with respect to $0$)}, if for any point $w_0\in D$, the logarithmic spiral $S: w=w_0\exp(-e^{i\psi}t)$, $0\le t\le+\infty$, is contained in $D$.

\textup{(2)} A  univalent function $f\in H(\mathbb{H})$, with $f(\beta)=0$ for some $\beta\in\mathbb{H}$, is said to be $\psi$\textit{-spirallike (with respect to $\beta$)}, if it maps the upper half plane onto a $\psi$-spirallike domain $D$ (with respect to 0).
\end{define}

For more details on spirallike functions, the interested reader may refer to \cite[\S2.7]{dur}. Note that $0$-spirals are half-lines emanating from the origin and extending to infinity. We refer to $0$-spirallike domains/functions as \textit{starlike} domains/functions. The following theorem gives an analytic characterization of spirallike mappings. 

\begin{thm} \label{spirallikeinH}
     Let $f\in H(\mathbb{H})$, with $f'(\beta)\neq0$ and $f(z)=0$ if and only if $z=\beta$. Then, $f$ is $\psi$-spirallike, if and only if 
$$\text{Im}\left(e^{-i\psi}\dfrac{(z-\beta)(z-\bar{\beta})f'(z)}{f(z)}\right)>0,$$
for all $z\in\mathbb{H}$.   
\end{thm}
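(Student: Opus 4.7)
The plan is to reduce Theorem \ref{spirallikeinH} to the classical characterization of $\psi$-spirallike functions on the unit disk, which states that a holomorphic $F\colon\mathbb{D}\to\C$ with $F(0)=0$, $F'(0)\ne0$ and $F(\zeta)=0$ only at $\zeta=0$ is $\psi$-spirallike if and only if
$$\operatorname{Re}\left(e^{-i\psi}\,\dfrac{\zeta F'(\zeta)}{F(\zeta)}\right)>0\qquad\text{for all }\zeta\in\mathbb{D};$$
this can be found in \cite[\S2.7]{dur}. Since $\psi$-spirallikeness is a property of the image domain, it is invariant under precomposition with a conformal map. So I will transfer $f$ to the disk via a M\"obius map sending $0\mapsto\beta$ and match the two analytic conditions.

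Concretely, take the conformal map $\phi\colon\mathbb{D}\to\mathbb{H}$ defined by $\phi(\zeta)=\dfrac{\beta-\bar{\beta}\zeta}{1-\zeta}$, which satisfies $\phi(0)=\beta$. Set $F=f\circ\phi$. Then $F(0)=0$, $F'(0)=f'(\beta)\phi'(0)\ne0$, and $F(\zeta)=0$ iff $\zeta=0$; moreover $f(\mathbb{H})=F(\mathbb{D})$, so $f$ is $\psi$-spirallike on $\mathbb{H}$ iff $F$ is $\psi$-spirallike on $\mathbb{D}$. The key computation is direct:
$$\phi(\zeta)-\beta=\dfrac{(\beta-\bar{\beta})\zeta}{1-\zeta},\qquad \phi(\zeta)-\bar{\beta}=\dfrac{\beta-\bar{\beta}}{1-\zeta},\qquad \phi'(\zeta)=\dfrac{\beta-\bar{\beta}}{(1-\zeta)^{2}},$$
which together yield the identity
$$(z-\beta)(z-\bar{\beta})=(\beta-\bar{\beta})\,\zeta\,\phi'(\zeta),\qquad z=\phi(\zeta).$$
Combining this with $F'(\zeta)=f'(\phi(\zeta))\phi'(\zeta)$, we obtain the clean transfer formula
$$\dfrac{(z-\beta)(z-\bar{\beta})f'(z)}{f(z)}=(\beta-\bar{\beta})\,\dfrac{\zeta F'(\zeta)}{F(\zeta)}.$$

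Since $\beta-\bar{\beta}=2i\operatorname{Im}\beta$ is purely imaginary with $\operatorname{Im}\beta>0$, multiplying both sides by $e^{-i\psi}$ and taking imaginary parts gives
$$\operatorname{Im}\left(e^{-i\psi}\dfrac{(z-\beta)(z-\bar{\beta})f'(z)}{f(z)}\right)=2\operatorname{Im}(\beta)\,\operatorname{Re}\left(e^{-i\psi}\dfrac{\zeta F'(\zeta)}{F(\zeta)}\right).$$
As $\phi$ is a bijection $\mathbb{D}\to\mathbb{H}$, the positivity of the left-hand side for all $z\in\mathbb{H}$ is equivalent to the positivity of the right-hand side for all $\zeta\in\mathbb{D}$, and hence by the disk criterion to the $\psi$-spirallikeness of $F$, i.e.\ of $f$.

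There is no real obstacle here; the whole argument reduces to an explicit M\"obius reduction and the disk characterization. The only point requiring care is to verify at the outset that the hypotheses on $f$ (namely $f(\beta)=0$ with multiplicity one and $f(z)\ne0$ elsewhere) translate into the standard hypotheses at $0$ for $F$, and to keep track of the factor $\beta-\bar{\beta}$ so that the real part on the disk side correctly becomes an imaginary part on the half-plane side.
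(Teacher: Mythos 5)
Your proof is correct and is essentially the same as the paper's: the paper also reduces to Duren's disk criterion via the M\"obius transform $T(z)=\frac{z-\beta}{z-\bar{\beta}}$, and your $\phi$ is exactly $T^{-1}$. You simply carry out explicitly the transfer computation that the paper leaves to the reader.
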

\begin{proof}
    By \cite[Theorem 2.19]{dur}, a function $g\in H(\mathbb{D})$, with $g'(0)\neq0$ and $g(z)=0$ if and only if $z=0$, is $\psi$-spirallike, if and only if 
    $$\mathrm{Re}\left(e^{-i\psi}\dfrac{zg'(z)}{g(z)}\right)>0,$$
    for all $z\in\mathbb{D}$. Considering the M\"{o}bius transform $T(z)=\frac{z-\beta}{z-\bar{\beta}}:\mathbb{H}\rightarrow\mathbb{D}$ and applying the preceding result to the function $g:=f\circ T^{-1}$, the desired inequality follows.
\end{proof}

Aside from that and returning to our setting, as we mentioned in the introduction, when $b_n$ is non-zero for finitely many $n\ge1$, $P_{\mathbb{H}}$ is a  rational function. As we shall see in Section 4, in order to solve PDE $(\ref{PDE1})$, we need to apply partial fraction decomposition on $\frac{1}{P_{\mathbb{H}}}$, which can be done since this is a rational function as well. However, the same method is not applicable when $b_n>0$, for all $n\ge1$. For this reason, we are going to require the following theorem, which is an application of the residue calculus (see \cite[Section II.9]{Mark}) for details.
\begin{thm}{\cite[Theorem II.2.7]{Mark}}\label{partial fractions expansion}
    Let $f$ be an analytic function in $\mathbb{C}$, whose only singularities are poles in the finite plane, say $(\lambda_n)_{n\ge1}$, and let $G_n$ be the principal part of $f$ at each $\lambda_n$, respectively. Let $(L_n)_{n\ge1}$ be a sequence of contours which satisfy the following:
    \begin{enumerate}
    \item[\textup{(i)}] each contour $L_n$ contains finitely many poles, 
    \item[\textup{(ii)}] $0\in L_n\subset\text{Int}(L_{n+1})$ for all $n\ge1$,
    \item[\textup{(iii)}] $r_n:=\text{dist}(0,L_n)\rightarrow +\infty$.
    \end{enumerate}
    If $\limsup_{n\rightarrow\infty}\int_{L_n}|f(\zeta)|d\zeta<\infty$,
    then $$f(z)=\sum_{n=1}^{+\infty}G_n(z)$$
    and the convergence is uniform on compacta.
\end{thm}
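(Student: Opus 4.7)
The plan is to apply the residue theorem to the meromorphic auxiliary function $F_z(\zeta):=f(\zeta)/(\zeta-z)$ on the interior of each $L_n$, for $z$ held fixed and distinct from every pole, and then send $n\to\infty$. By hypotheses (ii) and (iii) the point $z$ lies in $\text{Int}(L_n)$ for all sufficiently large $n$, and by (i) only finitely many poles of $f$ lie there. The residue theorem then produces
$$\frac{1}{2\pi i}\int_{L_n}\frac{f(\zeta)}{\zeta-z}\,d\zeta \;=\; f(z)\;+\;\sum_{\lambda_k\in\text{Int}(L_n)}\operatorname{Res}_{\zeta=\lambda_k}F_z(\zeta),$$
the first term arising from the simple pole of $F_z$ at $\zeta=z$. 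The crux of the proof is then to identify each remaining residue with $-G_k(z)$.

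For this I would use a global argument on the Riemann sphere rather than a Laurent-coefficient computation. Each principal part $G_k$ is a polynomial in $(\zeta-\lambda_k)^{-1}$ with no constant term, so it is holomorphic on $\mathbb{C}\setminus\{\lambda_k\}$ and $O(|\zeta|^{-1})$ at infinity. Hence $\zeta\mapsto G_k(\zeta)/(\zeta-z)$ is meromorphic on the sphere with poles only at $\lambda_k$ and $z$, and is $O(|\zeta|^{-2})$ near $\infty$; integrating over $|\zeta|=R$ and letting $R\to\infty$ forces the sum of its two finite residues to vanish. Since the residue at $\zeta=z$ equals $G_k(z)$, the residue at $\zeta=\lambda_k$ equals $-G_k(z)$. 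Writing $f=G_k+\phi_k$ near $\lambda_k$ with $\phi_k$ holomorphic there kills the $\phi_k$-contribution to $\operatorname{Res}_{\lambda_k}F_z$, yielding
$$f(z)-\sum_{\lambda_k\in\text{Int}(L_n)}G_k(z)\;=\;\frac{1}{2\pi i}\int_{L_n}\frac{f(\zeta)}{\zeta-z}\,d\zeta.$$

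The final step is an ML-type estimate uniform in $z$ on compacta. Fix a compact set $K\subset\mathbb{C}$ disjoint from the poles and pick $n_0$ so large that $K\subset\text{Int}(L_n)$ and $r_n\ge 2\max_{w\in K}|w|+1$ for every $n\ge n_0$; then $|\zeta-z|\ge r_n/2$ uniformly for $\zeta\in L_n$ and $z\in K$. Writing $M:=\limsup_n\int_{L_n}|f(\zeta)|\,|d\zeta|<\infty$, we obtain
$$\sup_{z\in K}\left|\frac{1}{2\pi i}\int_{L_n}\frac{f(\zeta)}{\zeta-z}\,d\zeta\right|\;\le\;\frac{1}{\pi r_n}\int_{L_n}|f(\zeta)|\,|d\zeta|\;\longrightarrow\;0,$$
so the partial sums $\sum_{\lambda_k\in\text{Int}(L_n)}G_k(z)$ converge to $f(z)$ uniformly on $K$. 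Enumerating the poles in the order induced by the nested contours then gives the claimed expansion $f(z)=\sum_{n\ge1}G_n(z)$, with uniform convergence on compacta. The only genuinely delicate ingredient is the residue identification $\operatorname{Res}_{\zeta=\lambda_k}F_z=-G_k(z)$; the Riemann-sphere argument above bypasses the explicit Laurent-series bookkeeping that would otherwise depend on the order of each pole, after which the remainder of the proof is a routine contour estimate.
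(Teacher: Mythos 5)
Your proof is correct and is essentially the classical argument behind the cited result: the paper itself offers no proof (it quotes Markushevich's Theorem II.2.7), and the standard proof there is exactly your scheme of applying the residue theorem to $f(\zeta)/(\zeta-z)$ over the contours $L_n$ and letting $n\to\infty$ via an ML-estimate. Your Riemann-sphere identification of $\operatorname{Res}_{\lambda_k}\bigl(f(\zeta)/(\zeta-z)\bigr)=-G_k(z)$ is a clean substitute for the usual Laurent-coefficient computation, and you correctly flag that the convergence obtained is that of the partial sums grouped by the nested contours.
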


\subsection{Harmonic Measure}

During the course of several proofs, we are going to utilize one conformal invariant, the \textit{harmonic measure}. An introductory presentation of its rich theory may be found in \cite{harmonic}. For the purposes of the present article, we will only review some basic facts. Let $\Omega\subsetneq\mathbb{C}$ be a domain with non-polar boundary. Let $E$ be a Borel subset of $\partial\Omega$. Then, the harmonic measure of $E$ with respect to $\Omega$ is exactly the solution of the generalized Dirichlet problem for the Laplacian in $\Omega$ with boundary function equal to $1$ on $E$ and to $0$ on $\partial\Omega\setminus E$. For the harmonic measure of $E$ with respect to $\Omega$ and for $z\in\Omega$, we use the notation $\omega(z,E,\Omega)$. It is known that for a fixed $z\in\Omega$, $\omega(z,\cdot,\Omega)$ is a Borel probability measure on $\partial\Omega$.

As we already mentioned, the harmonic measure is conformally invariant. In addition, it has a very useful monotonicity property. More specifically, let $\Omega_1\subset\Omega_2\subsetneq\mathbb{C}$ be two domains with non-polar boundaries and let $E\subset\partial\Omega_1\cap\partial\Omega_2$. Then,
$$\omega(z,E,\Omega_1)\le\omega(z,E,\Omega_2), \quad\text{for all }z\in\Omega_1.$$

Furthermore, later on we will need some formulas for harmonic measure in certain particular domains (see \cite[p.100]{harmonic}). Let $[a,b]\subset\mathbb{R}$ and let $z\in\mathbb{H}$. Then, 
$$\omega(z,[a,b],\mathbb{H})=\frac{1}{\pi}\arg\left(\frac{z-b}{z-a}\right).$$
As a consequence, given any curve $\gamma:[0,+\infty)\to\mathbb{H}$ satisfying $\lim_{t\to+\infty}\gamma(t)=\infty$, it can be easily calculated that $\lim_{t\to+\infty}\omega(\gamma(t),[a,b],\mathbb{H})=0$.

Finally, for the angular domain $U_{\alpha,\beta}:=\{w\in\mathbb{C}:\alpha<\arg w<\beta\}$, where $0\le\alpha<\beta<2\pi$ (or $-\pi\le\alpha<\beta<\pi$), and for $z\in U_{\alpha,\beta}$, we have
$$\omega(z,\{w\in\mathbb{C}:\arg w=\alpha\},U_{\alpha,\beta})=\frac{\beta-\arg z}{\beta-\alpha}=1-\omega(z,\{w\in\mathbb{C}:\arg w=\beta\},U_{\alpha,\beta}).$$

\begin{rmk}\label{cara}
    Suppose that $E\subset\partial\mathbb{D}$, where $\mathbb{D}$ is the unit disk, is a circular arc with endpoints $a,b$. Then, we know (see e.g. \cite[p.155]{carath}) that the level set
    $$D_k=\left\{z\in\mathbb{D}:\omega(z,E,\mathbb{D})=k\right\}, \quad k\in(0,1),$$
    is a circular arc (or a diameter in case $k=\frac{1}{2}$ and $E$ is a half-circle) inside $\mathbb{D}$ with endpoints $a,b$ that intersects $\partial\mathbb{D}$ with angles $k\pi$ and $(1-k)\pi$.
\end{rmk}

\begin{rmk}
    An important piece of information with regard to harmonic measure is its probabilistic interpretation. In particular, the quantity $\omega(z,E,\Omega)$ represents the probability of a Brownian motion starting at $z$ to exit the domain $\Omega$ for the first time passing through $E$. This thought, combined with the previous remark, justifies intuitively our usage of harmonic measure in the study of angles.
\end{rmk}

\begin{rmk}
    In many cases, instead of Borel sets, we use sets of prime ends for the harmonic measure. This is possible through Carath\'{e}odory's Theorem concerning the extension of the Riemann mapping theorem to the boundary (see \cite[Chapter 9]{pom} for details on prime ends and the boundary behavior of conformal mappings). To be more exact, suppose that $\Omega\subsetneq\mathbb{C}$ is a simply connected domain with non-polar boundary and $f:\mathbb{D}\to\Omega$ is a corresponding Riemann mapping. Suppose that $E\subset\partial\mathbb{D}$ is Borel. Then, this set $E$ corresponds through $f$ to a set of prime ends $\hat{f(E)}$ of $\Omega$ (if $\Omega$ is not a Jordan domain, then there might not be an one-to-one correspondence between $\hat{f(E)}$ and points of $\partial\Omega$ that are limit points of $\lim\limits_{z\to \zeta}f(z)$, $\zeta\in E$). Therefore, by the conformal invariance of the harmonic measure and Carath\'{e}odory's Theorem, we may write
    $$\omega(z,E,\mathbb{D})=\omega(f(z),\hat{f(E)},\Omega), \quad z\in\mathbb{D}.$$
\end{rmk}

\section{Properties of the driving function}
Given a summable sequence of non-negative numbers $(b_n)_{n\ge1}$ and a sequence of real points $(k_n)_{n\ge1}$, the function 
$$P_{\mathbb{H}}(z):=z+\sum_{n=1}^{+\infty}\dfrac{4b_n}{z-k_n}$$
is quickly verified to converge locally uniformly in $\mathbb{C}\setminus\overline{(k_n)_{n\ge1}}$. As we mentioned previously, the key role with regard to the geometry of the slits is played by the roots of $P_\mathbb{H}$. If the non-zero terms of the sequence $(b_n)_{n\ge1}$ are finitely many, then $P_{\mathbb{H}}$ is a rational function. Therefore, we are able to use the fundamental theorem of algebra in order to count its roots and then distinguish the cases when it has complex roots or only real roots. But since we do not have this theorem at our disposal in this case, we need to come up with different techinques. We start with the following lemma.

\begin{lemma}\label{FGH}
    Given the parameters above we have the following expressions:
    
\textup{(1)} If $F(z,w):=\dfrac{P_{\mathbb{H}}(z)}{(z-w)(z-\bar{w})}$, then

        $$F(z,w)=\left(\dfrac{P_{\mathbb{H}}(w)}{z-w}-\dfrac{P_{\mathbb{H}}(\bar{w})}{z-\bar{w}}\right)\dfrac{1}{w-\bar{w}}+\sum_{n=1}^{+\infty}\dfrac{4b_n}{(z-k_n)|w-k_n|^2},$$
        for all $z,w\in\mathbb{C}\setminus(k_n)_{n\ge1}$ with $z\neq w$.

       \textup{(2)} If $H(z,\lambda_1,\lambda_2):=\dfrac{P_{\mathbb{H}}(z)}{(z-\lambda_1)(z-\lambda_2)}$, then
       $$H(z,\lambda_1,\lambda_2)=\left(\frac{P_{\mathbb{H}}(\lambda_1)}{z-\lambda_1}-\frac{P_{\mathbb{H}}(\lambda_2)}{z-\lambda_2}\right)\dfrac{1}{\lambda_1-\lambda_2}+\sum_{n=1}^{+\infty}\dfrac{4b_n}{(z-k_n)(\lambda_1-k_n)(\lambda_2-k_n)},$$
        for all $z,\lambda_1,\lambda_2\in\mathbb{C}\setminus(k_n)_{n\ge1}$ with $z\neq\lambda_1,\lambda_2$ and $\lambda_1\neq\lambda_2$.

         \textup{(3)} If $G(z,\lambda):=\dfrac{P_{\mathbb{H}}(z)}{(z-\lambda)^2}$, then
          $$G(z,\lambda)=\dfrac{P_{\mathbb{H}}'(\lambda)}{z-\lambda}+\dfrac{P_{\mathbb{H}}(\lambda)}{(z-\lambda)^2}+\sum_{n=1}^{+\infty}\dfrac{4b_n}{(z-k_n)(\lambda-k_n)^2},$$
        for all $z,\lambda\in\mathbb{C}\setminus(k_n)_{n\ge1}$ with $z\neq\lambda$.
  
\end{lemma}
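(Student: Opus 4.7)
The plan is to derive all three identities from a single computational backbone: the divided-difference formula
\[
\frac{P_{\mathbb{H}}(z)-P_{\mathbb{H}}(\mu)}{z-\mu}=1-\sum_{n=1}^{+\infty}\frac{4b_n}{(z-k_n)(\mu-k_n)},
\]
valid for all $z,\mu\in\mathbb{C}\setminus\overline{(k_n)_{n\ge1}}$ with $z\neq\mu$. I would establish this at the outset by writing $P_{\mathbb{H}}(z)-P_{\mathbb{H}}(\mu)=(z-\mu)+\sum_{n}4b_n\bigl(\tfrac{1}{z-k_n}-\tfrac{1}{\mu-k_n}\bigr)$, telescoping each bracket into $-\tfrac{z-\mu}{(z-k_n)(\mu-k_n)}$, and dividing by $z-\mu$. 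The interchange of summation and subtraction is legitimate because of the locally uniform convergence of the series defining $P_{\mathbb{H}}$.

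For part (1), I would start from the trivial partial fraction $\frac{1}{(z-w)(z-\bar w)}=\frac{1}{w-\bar w}\bigl(\frac{1}{z-w}-\frac{1}{z-\bar w}\bigr)$, so that
\[
F(z,w)=\frac{1}{w-\bar w}\left(\frac{P_{\mathbb{H}}(z)}{z-w}-\frac{P_{\mathbb{H}}(z)}{z-\bar w}\right).
\]
Applying the backbone identity with $\mu=w$ and $\mu=\bar w$ rewrites each of these fractions as $\frac{P_{\mathbb{H}}(w)}{z-w}+1-\sum_{n}\frac{4b_n}{(z-k_n)(w-k_n)}$ and its $\bar w$-analogue. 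The constants $1$ cancel in the difference, while the surviving series combine via $\frac{1}{\bar w-k_n}-\frac{1}{w-k_n}=\frac{w-\bar w}{|w-k_n|^2}$, so that the factor $w-\bar w$ cancels against the prefactor and produces exactly the formula claimed in (1). Part (2) is identical word for word once $w,\bar w$ are replaced by the distinct points $\lambda_1,\lambda_2$: no conjugacy is used in the manipulation, and the product $|w-k_n|^2$ is simply replaced by $(\lambda_1-k_n)(\lambda_2-k_n)$.

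For part (3), I would iterate the backbone identity, treating it as a second-order Taylor expansion of $P_{\mathbb{H}}$ about $\lambda$. Termwise differentiation of the defining series (again justified by summability of $(b_n)$) gives $P_{\mathbb{H}}'(\lambda)=1-\sum_{n}\frac{4b_n}{(\lambda-k_n)^2}$. Subtracting this from the divided difference and telescoping once more, using $\frac{1}{(\lambda-k_n)^2}-\frac{1}{(z-k_n)(\lambda-k_n)}=\frac{z-\lambda}{(z-k_n)(\lambda-k_n)^2}$, yields
\[
\frac{P_{\mathbb{H}}(z)-P_{\mathbb{H}}(\lambda)-(z-\lambda)P_{\mathbb{H}}'(\lambda)}{(z-\lambda)^2}=\sum_{n=1}^{+\infty}\frac{4b_n}{(z-k_n)(\lambda-k_n)^2},
\]
and rearranging produces the stated expression for $G$.

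The only genuine obstacle is the book-keeping for the infinite sums: one has to check that every rearrangement is legitimate and that each resulting series converges on the claimed domain. Since $(b_n)$ is summable and, on compact subsets of $\mathbb{C}\setminus\overline{(k_n)_{n\ge1}}$ that stay away from the auxiliary points $\{w,\bar w,\lambda_1,\lambda_2,\lambda\}$, each of the quantities $|z-k_n|$, $|w-k_n|$, $|\lambda-k_n|$ admits a uniform positive lower bound, absolute convergence is immediate and Fubini/Tonelli handles all exchanges. Everything else is forced by the partial-fraction skeleton and requires no further idea.
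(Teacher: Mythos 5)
Your proof is correct, and at bottom it is the same telescoping partial-fraction computation as the paper's: the identity $\frac{1}{z-k_n}-\frac{1}{\mu-k_n}=-\frac{z-\mu}{(z-k_n)(\mu-k_n)}$ applied termwise to the series, with the interchange of sums justified by the locally uniform (indeed absolutely dominated) convergence away from $\overline{(k_n)_{n\ge1}}$. The organization differs in a way worth noting. The paper decomposes $H(z,\lambda_1,\lambda_2)$ directly by iterated partial fractions and then reassembles the coefficients into $P_{\mathbb{H}}(\lambda_1)$ and $P_{\mathbb{H}}(\lambda_2)$, obtaining (1) as the specialization $F(z,w)=H(z,w,\bar w)$ and (3) as $G(z,\lambda)=H(z,\lambda,\lambda)$; the latter specialization is literally undefined in the stated formula (the prefactor $\frac{1}{\lambda_1-\lambda_2}$ blows up) and implicitly requires letting $\lambda_2\to\lambda_1$ and recognizing the divided difference of $\lambda\mapsto\frac{P_{\mathbb{H}}(\lambda)}{z-\lambda}$ as its derivative. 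You instead isolate a single first-order divided-difference identity for $P_{\mathbb{H}}$ as the backbone, read off (1) and (2) from it, and obtain (3) from the genuine second-order identity $\frac{P_{\mathbb{H}}(z)-P_{\mathbb{H}}(\lambda)-(z-\lambda)P_{\mathbb{H}}'(\lambda)}{(z-\lambda)^2}=\sum_{n}\frac{4b_n}{(z-k_n)(\lambda-k_n)^2}$, which avoids the limiting argument entirely and makes the confluent case (3) as rigorous as the other two. The only cosmetic caveat is that the domain of validity should be stated with $\overline{(k_n)_{n\ge1}}$ removed (harmless here, since condition (\ref{initial condition}) forces the $k_n$ to be uniformly separated, so the set is already closed).
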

\begin{proof}
    We commence by proving relation (2). By executing consecutive partial fraction decompositions, we have that 
    \begin{align*}
        &H(z,\lambda_1,\lambda_2)=\dfrac{z}{(z-\lambda_1)(z-\lambda_2)}+\sum_{n=1}^{+\infty}\dfrac{\frac{4b_n}{z-k_n}}{(z-\lambda_1)(z-\lambda_2)}\\
        &=\dfrac{1}{\lambda_1-\lambda_2}\left(\dfrac{\lambda_1}{z-\lambda_1}-\dfrac{\lambda_2}{z-\lambda_2}+\sum_{n=1}^{+\infty}\dfrac{4b_n}{(z-k_n)(z-\lambda_1)}-\sum_{n=1}^{+\infty}\dfrac{4b_n}{(z-k_n)(z-\lambda_2)}\right)\\
       &=\dfrac{1}{\lambda_1-\lambda_2}\left[\dfrac{\lambda_1}{z-\lambda_1}-\dfrac{\lambda_2}{z-\lambda_2}+\sum_{n=1}^{+\infty}\left(\dfrac{\frac{4b_n}{\lambda_1-k_n}}{z-\lambda_1}-\dfrac{\frac{4b_n}{\lambda_1-k_n}}{z-k_n}\right)-\sum_{n=1}^{+\infty}\left(\dfrac{\frac{4b_n}{\lambda_2-k_n}}{z-\lambda_2}-\dfrac{\frac{4b_n}{\lambda_2-k_n}}{z-k_n}\right)\right]\\
       &=\dfrac{1}{\lambda_1-\lambda_2}\left(\dfrac{\lambda_1+\sum_{n=1}^{+\infty}\frac{4b_n}{\lambda_1-k_n}}{z-\lambda_1}-\dfrac{\lambda_2+\sum_{n=1}^{+\infty}\frac{4b_n}{\lambda_2-k_n}}{z-\lambda_2}\right)+\dfrac{1}{\lambda_1-\lambda_2}\sum_{n=1}^{+\infty}\dfrac{\frac{4b_n}{\lambda_2-k_n}-\frac{4b_n}{\lambda_1-k_n}}{z-k_n}
    \end{align*}
    and the third statement follows. Notice that $F(z,w)=H(z,w,\bar{w})$ and $G(z,\lambda)=H(z,\lambda,\lambda)$. Then, relations (1) and (3) follow.
\end{proof}

\begin{define}
    An interval of the form $I_n=(k_n,k_{n'})$ that does not contain any point of the sequence $(k_n)_{n\ge1}$ is called \textit{bounded interval} of $P_{\mathbb{H}}$. 
     If there exists an interval of the form $I_-=(-\infty,k_{n'})$ or $I_+=(k_{n'},+\infty)$ that does not contain any point of the sequence $(k_n)_{n\ge1}$, then it is called \textit{left or right unbounded interval} of $P_{\mathbb{H}}$, respectively. 
\end{define}

The preceding proposition is important, as it reveals the mechanism, according to which, $P_{\mathbb{H}}$ can have a unique complex root, or a unique multiple real root, or there exists a unique interval of $P_{\mathbb{H}}$ (bounded or unbounded) that contains three distinct real roots. Indeed, as we shall see in a subsequent corollary, the left summands in the relations $(1)-(3)$ above vanish, when $w, \lambda, \lambda_j$ are roots of $P_{\mathbb{H}}$ and as a result the imaginary parts of $F,G$ and $H$ are negative and positive in the upper half plane and in the lower half plane, respectively, allowing us to deduce uniqueness.

   \begin{lemma}\label{roots in intervals}
       A bounded interval of $P_{\mathbb{H}}$ contains either one or three roots of $P_{\mathbb{H}}$, counting multiplicity.
       An unbounded interval of $P_{\mathbb{H}}$ contains either none or two roots of $P_{\mathbb{H}}$, counting multiplicity.
   \end{lemma}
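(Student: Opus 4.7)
The plan is to combine an iterated application of Rolle's theorem with a parity argument read off from the behaviour of $P_{\mathbb{H}}$ at the endpoints of each interval. The key observation is that $P_{\mathbb{H}}''' < 0$ everywhere on $\R\setminus\{k_n\}_{n\ge1}$. Indeed, differentiating the series for $P_{\mathbb{H}}$ three times term by term (legitimate since the series converges locally uniformly away from the poles) yields $P_{\mathbb{H}}'''(x) = -24\sum_{n\ge1} b_n/(x-k_n)^4$, which is strictly negative by the positivity of the $b_n$'s. Consequently $P_{\mathbb{H}}''$ is strictly decreasing on every interval $I_j$ and on $I_{\pm}$.

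Next I would analyse $P_{\mathbb{H}}''(x)=\sum_{n\ge1} 8b_n/(x-k_n)^3$ at the endpoints of each interval. On a bounded interval $I_j=(k_j,k_{j'})$, the dominant terms give $P_{\mathbb{H}}''\to+\infty$ as $x\to k_j^+$ and $P_{\mathbb{H}}''\to-\infty$ as $x\to k_{j'}^-$, so by strict monotonicity $P_{\mathbb{H}}''$ has exactly one (simple) zero in $I_j$. On $I_+=(\max_n k_n,+\infty)$ every summand of $P_{\mathbb{H}}''$ is eventually positive and the whole expression decreases from $+\infty$ (at the left endpoint) to $0^+$ (at infinity), so $P_{\mathbb{H}}''$ has no zero in $I_+$; symmetrically, on $I_-$ it decreases from $0^-$ to $-\infty$ and has no zero.

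Now I would use the standard fact that if a real-analytic function has $m$ zeros on an open interval (counted with multiplicity), its derivative has at least $m-1$ zeros there. Applying this twice: on a bounded $I_j$, $P_{\mathbb{H}}'$ can have at most two zeros and $P_{\mathbb{H}}$ at most three; on an unbounded interval, $P_{\mathbb{H}}'$ can have at most one zero and $P_{\mathbb{H}}$ at most two.

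Finally, the parity argument closes the proof. On a bounded $I_j$, $P_{\mathbb{H}}$ goes from $+\infty$ at $k_j^+$ to $-\infty$ at $k_{j'}^-$, so the number of sign changes on $I_j$ is odd; since zeros of even multiplicity preserve sign while zeros of odd multiplicity flip it, the total multiplicity of the zeros has the same parity as the number of sign changes, hence is odd. Combined with the bound $\le 3$, this forces exactly $1$ or $3$ roots. On $I_+$ (resp.\ $I_-$), $P_{\mathbb{H}}$ has the same sign at both ends ($+\infty$ at both, resp.\ $-\infty$ at both), so the total multiplicity is even and $\le 2$, yielding $0$ or $2$ roots. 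The only mildly delicate ingredient is the Rolle-with-multiplicity step; everything else is a bookkeeping exercise using monotonicity of $P_{\mathbb{H}}''$ and the behaviour of $P_{\mathbb{H}}$ at the interval endpoints.
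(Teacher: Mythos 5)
Your proof is correct and follows essentially the same route as the paper's: the upper bounds come from $P_{\mathbb{H}}'''<0$ together with an iterated Rolle argument, and the exact counts come from the signs of $P_{\mathbb{H}}$ at the endpoints of each interval. You merely make explicit two steps the paper leaves implicit (the monotonicity and endpoint analysis of $P_{\mathbb{H}}''$, and the parity argument ruling out total multiplicity $2$ on bounded intervals and $1$ on unbounded ones), which is a welcome amount of detail but not a different method.
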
 
   \begin{proof}
      In a bounded interval of $P_{\mathbb{H}}$, say $(k_n,k_{n'})$, we observe that $\lim_{\mathbb{R}\ni x\to k_n^+}P_{\mathbb{H}}(x)=+\infty$ and $\lim_{\mathbb{R}\ni x\to {k_{n'}^-}}P_{\mathbb{H}}(x)=-\infty$. Thus, there exists some $\lambda_{n}\in(k_n,k_{n'})$, root of $P_{\mathbb{H}}$. Because the third derivative of $P_{\mathbb{H}}$ is always negative in $\mathbb{R}$, by Rolle's theorem, we can have, counting multiplicity, up to three real roots in $(k_n,k_{n'})$. In particular, if $P_\mathbb{H}$ has exactly two distinct roots, then one of them is necessarily a double root. 
      
      Besides, in case there exists a left unbounded interval, then we may compute that $\lim_{x\to-\infty}P_\mathbb{H}(x)=-\infty$, whereas if there exists a right unbounded interval, then $\lim_{x\to+\infty}P_\mathbb{H}(x)=+\infty$. Using similar arguments as above, we may deduce the desired result.
   \end{proof}
   As a direct corollary we may now prove Proposition \ref{possible roots0}.
\begin{proof}[Proof of Proposition \ref{possible roots0}]
 
        (1) Assume that $\beta\in\mathbb{H}$ is a complex root of $P_{\mathbb{H}}$. Since the parameters $b_n$ and $k_n$ are real, $\bar{\beta}$ is also a root. By Lemma \ref{FGH}, the imaginary part of $F(z,\beta)$ is negative for all $z\in\mathbb{H}$ and positive for all $z\in-\mathbb{H}$. This implies that $\beta,\bar{\beta}$ are simple roots of $P_{\mathbb{H}}$ and they are the unique non-real roots. Finally, we have that $F'(x,\beta)<0$, for all $x\in\mathbb{R}\setminus(k_n)_{n\ge1}$, which means that $F(\cdot,\beta)$ is decreasing in each interval of  $P_{\mathbb{H}}$ and the result follows.
        
        (2) Assume that there exist two distinct roots $\lambda_1,\lambda_2$ in some interval of $P_{\mathbb{H}}$. Again, we have that $\text{Im}(H(z,\lambda_1,\lambda_2))\neq0$, for all $z\in\mathbb{C}\setminus\mathbb{R}$. Hence, $H$ and by extension $P_\mathbb{H}$ cannot have non-real roots. Since $\lambda_1,\lambda_2$ lie in the same interval, we have that $(\lambda_1-k_n)(\lambda_2-k_n)>0$, for all $n\in\mathbb{N}$. Therefore, $H'(x,\lambda_1,\lambda_2)<0$, for all $x\in\mathbb{R}\setminus(k_n)_{n\ge1}$ and this completes the proof.

         (3) By Lemma \ref{FGH}, the imaginary part of $G(z,\rho_0)$ is negative for all $z\in\mathbb{H}$ and positive for all $z\in-\mathbb{H}$. Therefore, $G$ and $P_{\mathbb{H}}$ have no roots in $\mathbb{C}\setminus\mathbb{R}$. As before, we have that $G'(x,\rho_0)<0$ for all $x\in\mathbb{R}\setminus(k_n)_{n\ge1}$, which means that $G(\cdot,\rho)$ is decreasing in each interval of  $P_{\mathbb{H}}$. This monotony implies the desired result.

\end{proof}
We recall at this point that by condition $(\ref{initial condition})$, the terms of $(k_n)_{n\ge1}$ are chosen in such a way that $d:=\inf_{n\neq m}|k_n-k_m|>0$. This allows for the very useful property that for any $\epsilon\le\frac{d}{4}$, $P_{\mathbb{H}}$ is convergent uniformly in the domain $\mathbb{C}\setminus\bigcup_{n\ge1}D(k_n,\epsilon)$. Under this assumption, we can prove the following result.
\begin{proposition}\label{possibilities}
    Consider a sequence of non-negative numbers $(b_n)_{n\ge1}$ and a sequence of real points $(k_n)_{n\ge1}$ satisfying condition $(\ref{initial condition})$ and denote by $(\lambda_n)_{n\ge1}$ the standard roots of Proposition \ref{possible roots0}. We distinguish the following cases in accordance with Proposition \ref{possible roots0}:
    
  \textup{(1)} Let $\beta\in\mathbb{H}$ be a complex root of $P_{\mathbb{H}}$ (along with its conjugate $\bar{\beta}$) and define $\psi:=\mathrm{Arg}(P_{\mathbb{H}}'(\beta))$. Then, 
        \begin{equation}\label{complex root}
\dfrac{1}{P_{\mathbb{H}}(z)}=\dfrac{\frac{1}{P_{\mathbb{H}}'(\beta)}}{z-\beta}+\dfrac{\frac{1}{P_{\mathbb{H}}'(\bar{\beta})}}{z-\bar{\beta}}+\sum_{n=1}^{+\infty}\dfrac{\frac{1}{P_{\mathbb{H}}'(\lambda_n)}}{z-\lambda_n}.
    \end{equation}
    Furthermore, we have that $\psi\in(-\frac{\pi}{2},\frac{\pi}{2})$ and
    $$\sum_{n=1}^{+\infty}\frac{1}{P_{\mathbb{H}}'(\lambda_n)}+\dfrac{2\cos(\psi)}{|P_{\mathbb{H}}'(\beta)|}=1.$$

    \textup{(2)} Assume that $P_{\mathbb{H}}$ has only distinct real roots in some interval of $P_{\mathbb{H}}$, either three in a bounded one, say $\lambda_j<\rho_1<\rho_2$, or two in an unbounded, say $\rho_1,\rho_2$. Then,
    \begin{equation}\label{distinct roots}
\dfrac{1}{P_{\mathbb{H}}(z)}=\dfrac{\frac{1}{P_{\mathbb{H}}'(\rho_1)}}{z-\rho_1}+\dfrac{\frac{1}{P_{\mathbb{H}}'(\rho_2)}}{z-\rho_2}+\sum_{n=1}^{+\infty}\dfrac{\frac{1}{P_{\mathbb{H}}'(\lambda_n)}}{z-\lambda_n}.
\end{equation}
 Furthermore,
    $$\sum_{n=1}^{+\infty}\frac{1}{P_{\mathbb{H}}'(\lambda_n)}+\dfrac{1}{P_{\mathbb{H}}'(\rho_1)}+\dfrac{1}{P_{\mathbb{H}}'(\rho_2)}=1.$$

   \textup{(3a)} Let $\rho_0\in\mathbb{R}$ be a real double root of $P_{\mathbb{H}}$. Then,

      \begin{equation}\label{double root}
\dfrac{1}{P_{\mathbb{H}}(z)}=\dfrac{\frac{2}{P_{\mathbb{H}}^{(2)}(\rho_0)}}{(z-\rho_0)^2}+\dfrac{-\frac{2P_{\mathbb{H}}^{(3)}(\rho_0)}{3(P_{\mathbb{H}}^{(2)}(\rho_0))^2}}{z-\rho_0}+\sum_{n=1}^{+\infty}\dfrac{\frac{1}{P_{\mathbb{H}}'(\lambda_n)}}{z-\lambda_n}.
\end{equation}
Furthermore,
$$ -\dfrac{2P_{\mathbb{H}}^{(3)}(\rho_0)}{3\left(P_{\mathbb{H}}^{(2)}(\rho_0)\right)^2}+\sum_{n=1}^{+\infty}\dfrac{1}{P_{\mathbb{H}}'(\lambda_n)}=1$$

\textup{(3b)} Let $\rho_0\in\mathbb{R}$ be a real triple root of $P_{\mathbb{H}}$. Then, $\rho_0=\lambda_m$ for some $m\in\mathbb{N}$ and

\begin{equation}\label{triple root}
\dfrac{1}{P_{\mathbb{H}}(z)}=\dfrac{\frac{6}{P_{\mathbb{H}}^{(3)}(\rho_0)}}{(z-\rho_0)^3}+\dfrac{-\frac{3P_{\mathbb{H}}^{(4)}(\rho_0)}{2\left(P_{\mathbb{H}}^{(3)}(\rho_0)\right)^2}}{(z-\rho_0)^2}+2\dfrac{\left(\frac{P_{\mathbb{H}}^{(4)}(\rho_0)}{4!}\right)^2-\frac{P_{\mathbb{H}}^{(3)}(\rho_0)P_{\mathbb{H}}^{(5)}(\rho_0)}{3!5!}}{\left(\frac{P_{\mathbb{H}}^{(3)}(\rho_0)}{3!}\right)^3(z-\rho_0)}+\sum_{n\neq m}\dfrac{\frac{1}{P_{\mathbb{H}}'(\lambda_n)}}{z-\lambda_n}.
\end{equation}
Furthermore,
$$2\dfrac{\left(\frac{P_{\mathbb{H}}^{(4)}(\rho_0)}{4!}\right)^2-\frac{P_{\mathbb{H}}^{(3)}(\rho_0)P_{\mathbb{H}}^{(5)}(\rho_0)}{3!5!}}{\left(\frac{P_{\mathbb{H}}^{(3)}(\rho_0)}{3!}\right)^3}+\sum_{n\neq m}\frac{1}{P_{\mathbb{H}}'(\lambda_n)}=1.$$

\end{proposition}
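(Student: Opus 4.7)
The plan is to apply Theorem~\ref{partial fractions expansion} to $f=1/P_\mathbb{H}$. First I would verify that $f$ is meromorphic on all of $\mathbb{C}$: the separation condition $d>0$ prevents the $k_n$'s from accumulating, and at each $k_n$ the function $1/P_\mathbb{H}$ has a removable singularity (in fact a simple zero), so the only singularities of $1/P_\mathbb{H}$ are the roots of $P_\mathbb{H}$, classified by Proposition~\ref{possible roots0} into the four cases. The principal part at a simple root $\rho$ is $(1/P'_\mathbb{H}(\rho))/(z-\rho)$, while at a multiple root $\rho_0$ of order $m\in\{2,3\}$ it is read off from the Taylor expansion $P_\mathbb{H}(z)=\sum_{k\ge m}a_k(z-\rho_0)^k$ with $a_k=P^{(k)}_\mathbb{H}(\rho_0)/k!$, by inverting via $(1+u)^{-1}=1-u+u^2-\cdots$; a direct algebraic computation produces exactly the coefficients displayed in (\ref{double root}) and (\ref{triple root}).

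The main technical step is to verify the integrability hypothesis $\limsup_n\int_{L_n}|1/P_\mathbb{H}(\zeta)|\,|d\zeta|<\infty$ of Theorem~\ref{partial fractions expansion}. I would take $L_n=\{|z|=R_n\}$ with $R_n\to\infty$ chosen so that $\abs{R_n-\abs{k_m}}\ge d/8$ for every $m$; such $R_n$ exist in arbitrarily long ranges because the $d$-separation of $(k_n)$ bounds the total length of the forbidden intervals around the $|k_m|$'s. For $z\in L_n$, I would split $\sum_m 4b_m/|z-k_m|$ into three parts according to $|k_m|\le R_n/2$ (so $|z-k_m|\ge R_n/2$), $R_n/2<|k_m|\le 2R_n$ (so $|z-k_m|\ge d/8$ by the gap choice), and $|k_m|>2R_n$ (so $|z-k_m|\ge |k_m|/2$). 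Summability of $(b_n)$ then yields $|P_\mathbb{H}(z)|\ge R_n/2$ on $L_n$ for $R_n$ large, hence $\int_{L_n}|1/P_\mathbb{H}(\zeta)|\,|d\zeta|\le 4\pi$, and Theorem~\ref{partial fractions expansion} delivers the expansions (\ref{complex root})--(\ref{triple root}).

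For the summation identities I would integrate $1/P_\mathbb{H}$ once more around $L_n$. Since $P_\mathbb{H}(z)=z+O(1/z)$ at infinity, one has $1/P_\mathbb{H}(z)=1/z+O(1/z^3)$, and therefore $\int_{L_n}d\zeta/P_\mathbb{H}(\zeta)\to 2\pi i$; on the other hand, by Cauchy's residue theorem this same integral equals $2\pi i$ times the sum of the residues of $1/P_\mathbb{H}$ at the poles enclosed by $L_n$. Passing to the limit gives that the total sum of residues equals $1$, which is exactly the displayed identity in each of the four cases. In case (1) the contributions from $\beta$ and $\bar\beta$ combine via $P'_\mathbb{H}(\bar\beta)=\overline{P'_\mathbb{H}(\beta)}$ into $2\cos\psi/|P'_\mathbb{H}(\beta)|$. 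Finally, $\psi\in(-\pi/2,\pi/2)$ follows from Lemma~\ref{FGH}: since $\mathrm{Im}\,F(z,\beta)<0$ on $\mathbb{H}$ and Taylor expansion of $P_\mathbb{H}$ at $\beta$ gives $F(\beta,\beta)=P'_\mathbb{H}(\beta)/(2i\,\mathrm{Im}\,\beta)$, the maximum principle for the harmonic function $\mathrm{Im}\,F(\cdot,\beta)$ forces strict inequality at $\beta$, which amounts to $\mathrm{Re}\,P'_\mathbb{H}(\beta)>0$, i.e.\ $\cos\psi>0$.

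The main obstacle will be the uniform lower bound on $|P_\mathbb{H}|$ on the contours $L_n$: since individual $k_m$'s may lie arbitrarily close to $L_n$, the estimate of $\sum 4b_m/|z-k_m|$ must simultaneously exploit the separation $d>0$, the summability of $(b_m)$, and the careful choice of radii, all three ingredients being necessary to kill the middle range $R_n/2<|k_m|\le 2R_n$.
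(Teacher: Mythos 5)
Your argument is correct. The first half --- establishing the expansions (\ref{complex root})--(\ref{triple root}) by verifying the hypotheses of Theorem \ref{partial fractions expansion} on contours kept at a definite distance from the poles of $P_{\mathbb{H}}$ --- is essentially the paper's proof: the paper uses rectangles passing through midpoints of the gaps $I_{m^1_n}$ (so that $\mathrm{dist}(L_n,(k_m)_{m})\ge d/2$ and $|P_{\mathbb{H}}|\ge R_n-\frac{d}{2}-\sigma$ there, with $\sigma=\frac{2}{d}\sum_m 4b_m$), while you use circles whose radii dodge the set $\{|k_m|\}$, your three-range estimate of $\sum_m 4b_m/|\zeta-k_m|$ playing the role of the paper's single bound by $\sigma$; both work. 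Where you genuinely diverge is in the summation identities. The paper first proves $\sum_n 1/|P'_{\mathbb{H}}(\lambda_n)|<\infty$ by a separate contradiction argument (bounding the real part of $iy/P_{\mathbb{H}}(iy)$) and then lets $y\to+\infty$ in the expansion term by term; you instead integrate $1/P_{\mathbb{H}}$ once more over $L_n$, use $\int_{L_n}d\zeta/P_{\mathbb{H}}(\zeta)\to 2\pi i$, and invoke the residue theorem to read off that the total residue sum equals $1$. Your route is shorter and yields the absolute summability of $(1/P'_{\mathbb{H}}(\lambda_n))_{n}$ as a byproduct, since these residues all have the same sign and their partial sums along the exhaustion converge --- a point worth stating explicitly. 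One small imprecision: on your contours the error $1/P_{\mathbb{H}}(\zeta)-1/\zeta$ is only $o(1)\cdot R_n^{-2}$ rather than $O(R_n^{-3})$, because the middle-range tail $\sum_{R_n/2<|k_m|\le 2R_n}b_m$ tends to $0$ with no rate; this is still amply sufficient for $\int_{L_n}d\zeta/P_{\mathbb{H}}(\zeta)\to 2\pi i$. The argument for $\cos\psi>0$ via $F(\beta,\beta)=P'_{\mathbb{H}}(\beta)/(2i\,\mathrm{Im}\,\beta)$ coincides with the paper's.
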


\begin{proof}

    We first establish the representation formulas $(\ref{complex root})-(\ref{triple root})$. Assume, without loss of generality, that the sequence $(k_n)_{n\ge1}$ accumulates at both $+\infty$ and $-\infty$. So, we can extract a subsequence $(k_{m^1_n})_{n\ge1}$ that increases to $+\infty$. Let $R_n$ be the middle points of the intervals $I_{m^1_n}$ (see condition $(\ref{initial condition})$) and observe that $(R_n)_{n\ge1}$ is increasing to $+\infty$. We, now, construct a sequence of rectangles $(L_n)_{n\ge1}$ in the following manner: first we consider $L_n$ to be the square of center $0$ with sides parallel to the axes, passing from the point $R_n$. If $-R_n$ happens to be the middle point of an interval $(k_{m_{n_2}^2},k_{m_{n_1}^2})$, we leave $L_n$ as is. If not, we transform $L_n$ into a rectangle passing from the points $R_n,\pm iR_n$ and $-R_n-\epsilon_n$ (or $-R_n+\epsilon_n$), where $\epsilon_n<\frac{d}{2}$ is picked in such a way that $|\zeta-k_m|\ge\frac{d}{2}$, for all $\zeta\in L_n$ and every $m\ge1$. We then have that $L_n\subset\mathrm{Int}(L_{n+1})$ and the perimeter of $L_n$ is less than $8R_n+2\epsilon_n$, for all $n\ge1$. We may assume that for each case described above, the additional roots $\beta,\bar{\beta}$ or $\rho_j$, lie in $\mathrm{Int}(L_1)$ and $R_1>\sigma:=\frac{2}{d}\sum_{n=1}^{+\infty}4b_n$. We then have that $|P_{\mathbb{H}}(\zeta)|=\abs{\zeta+\sum_{m=1}^{+\infty}\frac{4b_m}{\zeta-k_m}}\ge R_n-\frac{d}{2}-\sigma$, for all $\zeta\in L_n$. This implies that $\limsup_{n\rightarrow+\infty}\int_{L_n}\frac{1}{|P_{\mathbb{H}}(\zeta)|}|d\zeta|<+\infty$. For every $n\ge1$, recall that $\lambda_n$ is a simple root of $P_{\mathbb{H}}$, hence a simple pole $\frac{1}{P_{\mathbb{H}}}$ with residue $\frac{1}{P_{\mathbb{H}}'(\lambda_n)}$. For the cases (1) and (2), the roots $\beta, \bar{\beta}$ and $\rho_j$, $j=1,2$, are also simple poles of $\frac{1}{P_{\mathbb{H}}}$, while in cases (3a) and (3b), the root $\rho_0$ is a pole of second or third order, respectively. Therefore, writing down the principal part of $\frac{1}{P_{\mathbb{H}}}$ around each pole, we apply Theorem \ref{partial fractions expansion} and we conclude relations $(\ref{complex root})-(\ref{triple root})$. The convergence is locally uniform.
    
Next, we shall establish the second part of (1). Using Lemma \ref{FGH}, we observe that $\mathrm{Im}F(\beta,\beta)=\mathrm{Im}\frac{|P_{\mathbb{H}}'(\beta)|e^{i\psi}}{2i\mathrm{Im}\beta}<0$ and hence $\cos(\psi)>0$. To prove the last statement, we have that 
        $$\dfrac{iy}{P_{\mathbb{H}}(iy)}=\dfrac{1}{P_{\mathbb{H}}'(\beta)}\dfrac{iy}{iy-\beta}+\dfrac{1}{P_{\mathbb{H}}'(\bar{\beta})}\dfrac{iy}{iy-\bar{\beta}}+\sum_{n=1}^{+\infty}\dfrac{1}{P_{\mathbb{H}}'(\lambda_n)}\dfrac{iy}{iy-\lambda_n},$$
        for all $y>0$. At first, we need to show that the $(1/P'_{\mathbb{H}}(\lambda_n))_{n\ge1}$ is summable. Taking the real parts in the preceding relation, we have that 
        $$\sum_{n=1}^{+\infty}\dfrac{1}{|P_{\mathbb{H}}'(\lambda_n)|}\dfrac{y^2}{y^2+\lambda_n^2}=\text{Re}\left(\dfrac{1}{P_{\mathbb{H}}'(\beta)}\dfrac{iy}{iy-\beta}+\dfrac{1}{P_{\mathbb{H}}'(\bar{\beta})}\dfrac{iy}{iy-\bar{\beta}}-\dfrac{iy}{P_{\mathbb{H}}(iy)}\right)=:\Phi(y)>0$$
        for all $y\ge0$. Aiming for a contradiction, we assume that $\sum_{n=1}^{+\infty}\frac{1}{|P'_{\mathbb{H}}(\lambda_n)|}=+\infty$, we then consider some natural $N\ge1$, such that its $N$-th partial sum is larger than $M:=2\max_{y\ge0}\Phi(y)<\infty$. Then, we choose $y_0>0$, so that $\frac{y_0^2}{y_0^2+\lambda_n^2}\ge\frac{1}{2}$, for all $n=1,\dots,N$. As a result, 
        $$\Phi(y_0)\ge\sum_{n=1}^{N}\dfrac{1}{|P_{\mathbb{H}}'(\lambda_n)|}\dfrac{y_0^2}{y_0^2+\lambda_n^2}\ge\dfrac{1}{2}\sum_{n=1}^{N}\dfrac{1}{|P_{\mathbb{H}}'(\lambda_n)|}>\dfrac{M}{2}\ge\Phi(y_0).$$
        This contradiction shows that the series is convergent, and finally, letting $y\rightarrow+\infty$, by the uniform convergence of the series, we deduce the second statement of $(1)$. The other cases follow similarly, so we omit their proof.
\end{proof}

In the last proposition, we made four different assumptions regarding the roots of $P_{\mathbb{H}}$. Obviously, each of these cases occurs depending on the choice of the sequences $(b_n)_{n\ge1}$ and $(k_n)_{n\ge1}$. The following proposition states that for every such choice, one of these four cases can turn up and they are mutually exclusive, thus, exactly one of the four holds.
\begin{proposition}\label{allroots}
Given a sequence of real points $(k_n)_{n\ge1}$ and a summable sequence of positive numbers $(b_n)_{n\ge1}$, then only one of the possible cases of Proposition \ref{possible roots0} can occur. Therefore, either $P_{\mathbb{H}}$ has a complex root or one of the intervals of $P_{\mathbb{H}}$ has three roots counting multiplicity, if the interval is bounded, or two roots counting multiplicity if it is unbounded.
\end{proposition}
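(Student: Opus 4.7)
The plan is to prove Proposition \ref{allroots} in two parts: mutual exclusivity of the four cases, which is essentially a direct reading of Proposition \ref{possible roots0}, and exhaustivity, established by contradiction using the partial-fraction expansion of $1/P_\mathbb{H}$.

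For mutual exclusivity, the four cases of Proposition \ref{possible roots0} are characterised by incompatible conditions on the root set. Case (1) produces a complex root and then forces no interval to contain two distinct or multiple real roots. Case (2) produces three (or two, in the unbounded setting) distinct real roots in a single interval and explicitly excludes both complex roots and multiple real roots. Each subcase of (3) requires all roots of $P_\mathbb{H}$ to be real and forces each interval other than that containing $\rho_0$ to host at most one simple standard root; moreover (3a) and (3b) are distinguished by the multiplicity of $\rho_0$ and hence cannot occur simultaneously.

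For exhaustivity, suppose toward contradiction that none of the four cases holds. Then $P_\mathbb{H}$ has no non-real root (negation of (1)), no interval of $P_\mathbb{H}$ contains two or more distinct real roots (negation of (2)), and $P_\mathbb{H}$ has no multiple real root (negation of (3)). Combining these three facts with Lemma \ref{roots in intervals} forces every bounded interval to contain exactly one root, necessarily simple and hence the standard root $\lambda_n$ (otherwise three roots counted with multiplicity would produce either a multiplicity or two distinct values), and every unbounded interval to contain no root at all. Consequently, the full root set of $P_\mathbb{H}$ equals $\{\lambda_n\}_{n\ge 1}$, each root simple with $P'_\mathbb{H}(\lambda_n)<0$.

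At this point I would apply Theorem \ref{partial fractions expansion} to $1/P_\mathbb{H}$ along the same rectangular contours $(L_n)_{n\ge 1}$ built in the proof of Proposition \ref{possibilities}. Their construction and the tail estimate $\limsup\int_{L_n}|P_\mathbb{H}(\zeta)|^{-1}|d\zeta|<\infty$ depend only on the pole sequence $(k_n)_{n\ge 1}$, not on the root configuration, so the expansion
\[
\dfrac{1}{P_\mathbb{H}(z)}=\sum_{n=1}^{+\infty}\dfrac{1/P'_\mathbb{H}(\lambda_n)}{z-\lambda_n}
\]
holds locally uniformly. Multiplying by $iy$, taking real parts to establish summability of $(1/|P'_\mathbb{H}(\lambda_n)|)_{n\ge 1}$, and letting $y\to+\infty$ exactly as in the final step of Proposition \ref{possibilities} yields
\[
\sum_{n=1}^{+\infty}\frac{1}{P'_\mathbb{H}(\lambda_n)}=1,
\]
which is impossible since every summand on the left is strictly negative. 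The contradiction establishes that at least one of the four cases must occur. The main obstacle is verifying that the partial-fraction machinery from Proposition \ref{possibilities} survives in this hypothetical configuration without any non-standard root; this is resolved by the observation that both the contour construction and the tail estimate depend only on the poles $(k_n)$.
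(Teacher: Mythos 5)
Your proposal is correct and follows essentially the same route as the paper: the heart of both arguments is that if only the standard roots $\lambda_n$ existed, the expansion $\frac{1}{P_{\mathbb{H}}(z)}=\sum_{n\ge1}\frac{1/P_{\mathbb{H}}'(\lambda_n)}{z-\lambda_n}$ would force $\sum_{n\ge1}\frac{1}{P_{\mathbb{H}}'(\lambda_n)}=1$, impossible since every term is negative. Your explicit remark that the contour construction and tail estimate from Proposition \ref{possibilities} depend only on the poles $(k_n)_{n\ge1}$, not on the root configuration, is a worthwhile clarification of a step the paper passes over with "as in the proof of Proposition \ref{possibilities}".
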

\begin{proof}
    
Assume that $P_{\mathbb{H}}$ has no complex roots. We will prove that there exists an interval (bounded or unbounded) that contains three or two roots counting multiplicity. Assume on the contrary that this is not true and hence each bounded interval $I_n$ contains only the standard root $\lambda_n$ and each unbounded (if it exists) contains no roots. Then, as in the proof of Proposition \ref{possibilities}, we have that $\frac{1}{P_{\mathbb{H}}(z)}=\sum_{n=1}^{+\infty}\frac{1}{P_{\mathbb{H}}'(\lambda_n)}\frac{1}{z-\lambda_n}$, which implies that
$$1=\lim_{y\rightarrow+\infty}\frac{iy}{P_{\mathbb{H}}(iy)}=\sum_{n=1}^{+\infty}\frac{1}{P_{\mathbb{H}}'(\lambda_n)}<0,$$
a contradiction. Therefore, there exists some interval that contains more than one root and by Proposition \ref{possible roots0}, this is the unique interval that will contain three roots, if it is bounded, or two if it is unbounded, counting multiplicity.

On the other hand, if $P_{\mathbb{H}}$ has a complex root, then by Proposition \ref{possible roots0}, this root along with its conjugate are the sole complex roots and each bounded interval of $P_{\mathbb{H}}$ has exactly one real root, whereas if there exists an unbounded one, it has no real roots.

\end{proof}

\section{The chordal Loewner flow}
Recall, now, that our purpose is to solve PDE (\ref{PDE1}) and describe its solutions geometrically. For this, we consider the corresponding ODE
\begin{equation}
    \label{ODE1}
    \dfrac{dw}{dt}(z,t)=\sum_{n=1}^{+\infty}\dfrac{2b_n}{w(z,t)-k_n\sqrt{1-t}}
\end{equation}
with initial value $w(z,0)=z$. This becomes, using the transform $v=(1-t)^{-\frac{1}{2}}w$,
\begin{equation}
    \label{ODE2}
    \dfrac{dv}{dt}(z,t)=\dfrac{1}{2(1-t)}\left(v(z,t)+\sum_{n=1}^{+\infty}\dfrac{4b_n}{v(z,t)-k_n}\right)=\dfrac{P_{\mathbb{H}}(v(z,t))}{2(1-t)}.
\end{equation}
This is a separable differential equation and therefore, the initial value problem $(\ref{PDE1})$ can be solved by integrating the equation
\begin{equation}\label{differential}
    \dfrac{1}{P_{\mathbb{H}}(v)}dv=\dfrac{1}{2(1-t)}dt.
\end{equation}
For this reason, fix some $z_0\in\mathbb{H}$ and consider the function $\phi(z):=\int_{z_0}^{z}\frac{d\zeta}{P_{\mathbb{H}}(\zeta)}$, which is well defined and analytic, since $\mathbb{H}$ is simply connected. The function $\phi$ is going to play the role of the primitive of $\frac{1}{P_{\mathbb{H}}}$. Looking at relations $(\ref{complex root})-(\ref{triple root})$, the following lemma will be necessary for later.

\begin{lemma}\label{analyticityofh}
    Let $(A_n)_{n\ge1}$ be an absolutely summable sequence of complex numbers and let $(\lambda_{n})_{n\ge1}$ be a sequence of real points. Fix some $z_0\in\mathbb{H}$. Then, the function
    $$\psi(z):=\sum_{n=1}^{+\infty}A_n\log\dfrac{z-\lambda_{n}}{z_0-\lambda_n}$$
    is analytic in $\mathbb{H}$ and continuous in $\overline{\mathbb{H}}\setminus\overline{(\lambda_n)_{n\ge1}}$.
\end{lemma}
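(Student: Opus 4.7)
The plan is to apply the Weierstrass $M$-test after establishing a uniform bound on each summand. Throughout, the principal branch of $\log$ will be used; this is legitimate since for every $z\in\overline{\mathbb{H}}\setminus\{\lambda_n\}$, the quotient $\frac{z-\lambda_n}{z_0-\lambda_n}$ avoids the slit $(-\infty,0]$: when $z\in\mathbb{H}$ both numerator and denominator lie in $\mathbb{H}$, so the argument of the ratio is in $(-\pi,\pi)$; when $z\in\mathbb{R}\setminus\{\lambda_n\}$, the numerator is real and nonzero while the denominator is in $\mathbb{H}$, so the quotient has nonzero imaginary part. Consequently each summand $A_n\log\frac{z-\lambda_n}{z_0-\lambda_n}$ is analytic on $\mathbb{H}$ and continuous on $\overline{\mathbb{H}}\setminus\{\lambda_n\}$, and vanishes at $z=z_0$.

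The core estimate is the following. Fix a compact set $K$, either contained in $\mathbb{H}$ (for the analyticity statement) or in $\overline{\mathbb{H}}\setminus\overline{(\lambda_n)_{n\ge 1}}$ (for the continuity statement), and set $\delta_K:=\mathrm{dist}(K,\mathbb{R})>0$ in the first case and $\delta_K:=\mathrm{dist}(K,\overline{(\lambda_n)_{n\ge 1}})>0$ in the second. Splitting the logarithm into real and imaginary parts, the principal-argument choice gives the free bound $\left|\mathrm{Im}\,\log\frac{z-\lambda_n}{z_0-\lambda_n}\right|\le\pi$. For the real part, the identity $\frac{z-\lambda_n}{z_0-\lambda_n}=1+\frac{z-z_0}{z_0-\lambda_n}$ combined with $|z_0-\lambda_n|\ge\mathrm{Im}(z_0)>0$ provides a uniform upper bound on $\left|\frac{z-\lambda_n}{z_0-\lambda_n}\right|$, while the reciprocal identity $\frac{z_0-\lambda_n}{z-\lambda_n}=1+\frac{z_0-z}{z-\lambda_n}$ together with $|z-\lambda_n|\ge\delta_K$ gives the corresponding positive lower bound. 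Taking logarithms and combining yields $\left|\log\frac{z-\lambda_n}{z_0-\lambda_n}\right|\le C_K$ uniformly in $z\in K$ and $n\ge 1$. It is worth observing that this estimate is insensitive to whether $(\lambda_n)$ is bounded or unbounded: only $\mathrm{Im}(z_0)$, $\delta_K$, and the diameter of $K\cup\{z_0\}$ enter.

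Since $\sum_{n\ge 1}|A_n|\,C_K<+\infty$ by absolute summability of $(A_n)$, the Weierstrass $M$-test gives uniform convergence of the series on $K$. The local uniform limit of analytic functions is analytic and the uniform limit of continuous functions is continuous, so $\psi$ inherits both properties on the appropriate sets. The only nontrivial point is the two-sided modulus estimate on the quotient; once it is in place, everything else is routine.
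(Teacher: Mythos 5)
Your proof is correct and follows essentially the same route as the paper's: a Weierstrass $M$-test based on a uniform two-sided bound for $\left|\frac{z-\lambda_n}{z_0-\lambda_n}\right|$ on compacta (giving a bound on the real part of the logarithm) together with the trivial bound on the argument. Your derivation of the modulus bounds via $\frac{z-\lambda_n}{z_0-\lambda_n}=1+\frac{z-z_0}{z_0-\lambda_n}$ and its reciprocal is a minor streamlining of the paper's case analysis on $|\lambda_n|$ relative to the radius of the compact set, and your explicit justification of the principal branch is a welcome extra detail, but the argument is the same.
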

\begin{proof}
    It suffices to prove that the series converges uniformly in a compact set $K\subset\overline{\mathbb{H}}\setminus\overline{(\lambda_n)_{n\ge1}}$. Let $R>\abs{z_0}$  such that $K\subset D(0,R)$ and let $d:=\text{dist}(K,\overline{(\lambda_n)_{n\ge1}})$. Clearly $d>0$. For all those $n\ge1$ such that $\abs{\lambda_n}\le R$ and for $z\in K$, we have that
    $$\dfrac{d}{\abs{z_0}+R}\le\left|\dfrac{z-\lambda_n}{z_0-\lambda_n}\right|\le\dfrac{\abs{z}+\abs{\lambda_n}}{\abs{z_0-\lambda_n}}\le\dfrac{2R}{\text{Im}z_0}.$$
    Thus, there exists some $M_1(K)>0$ such that $\abs{\log\abs{\frac{z-\lambda_n}{z_0-\lambda_n}}}\le M_1(K)$. On the other hand, for all those $n\ge1$ such that $\abs{\lambda_n}>R$, we have that
    $$\left|\dfrac{z-\lambda_n}{z_0-\lambda_n}\right|=\left|\dfrac{z-z_0+z_0-\lambda_n}{z_0-\lambda_n}\right|\le1+\dfrac{R+\abs{z_0}}{R-\abs{z_0}}=\dfrac{2R}{R-\abs{z_0}}.$$
    Now, if $R<\abs{\lambda_n}\le2R$, then $\abs{\frac{z-\lambda_n}{z_0-\lambda_n}}\ge\frac{d}{\abs{z_0}+2R}$ whereas if $\abs{\lambda_n}>2R$, we have that $\left|\frac{z-\lambda_n}{z_0-\lambda_n}\right|\ge\frac{\abs{\lambda_n}-R}{\abs{\lambda_n}+R}>\frac{1}{3}$. In total, we may find some $M_2(K)>0$, such that $\abs{\log\abs{\frac{z-\lambda_n}{z_0-\lambda_n}}}\le M_2(K)$. Letting $M>\max\{M_1(K),M_2(K)\}$, we have that 
    \begin{align*}
        \left|\sum_{n=1}^{+\infty}A_n\log\dfrac{z-\lambda_{n}}{z_0-\lambda_n}\right|&\le\sum_{n=1}^{+\infty}\abs{A_n}\abs{\log\left|\dfrac{z-\lambda_{n}}{z_0-\lambda_n}\right|}+\sum_{n=1}^{+\infty}\abs{A_n}\abs{\text{arg}\dfrac{z-\lambda_{n}}{z_0-\lambda_n}}\\
        &\le(M+2\pi)\sum_{n=1}^{+\infty}\abs{A_n}<+\infty
    \end{align*}
    and the desired result follows.
    \end{proof}

The usefulness of the preceding lemma lies in the fact that depending on the choice of the parameters, the series $\sum_{n=1}^{+\infty}A_n\log(z-\lambda_n)$ might not be convergent. We overcome this problem by fixing some $z_0$ and using the  function $\psi$ above. In order to study equation (\ref{PDE1}),  we distinguish the four possible cases of Proposition \ref{possibilities}. According to Proposition \ref{allroots}, all cases can arise and hence we study each case separate in the upcoming subsections. We fix some arbitrary point $z_0\in\mathbb{H}$, according to Lemma \ref{analyticityofh}.

\begin{table}[h]

\begin{tabular}{ | m{5cm} | m{5cm}|  } 
  \hline
  \hspace{15mm}\textbf{Roots of $P_{\mathbb{H}}$} & \hspace{5mm}\textbf{Geometry of the slits}\\
  \hline
  $P_{\mathbb{H}}$ has a complex root $\beta\in\mathbb{H}$. &  $\gamma_n$ spirals about $\beta$, $\forall n\ge1$. \\ 
  \hline
 $P_{\mathbb{H}}$ has a double root $\rho_0\in\mathbb{R}$. & $\gamma_n$ intersects $\mathbb{R}$ tangentially at $\rho_0$, $\forall n\ge1$.\\
  \hline
$P_{\mathbb{H}}$ has a triple root $\rho_0\in\mathbb{R}$. & $\gamma_n$ intersects $\mathbb{R}$ orthogonally at $\rho_0$, $\forall n\ge1$.\\
  \hline
  $P_{\mathbb{H}}$ has distinct real roots and $\exists\rho_1\in\mathbb{R}$,  satisfying $P_{\mathbb{H}}'(\rho_1)>0$. &  $\gamma_n$ intersects $\mathbb{R}$ non-tangentially at some $\rho_1$, $\forall n\ge1$.\\
  \hline
\end{tabular}
\caption{All options for the roots of $P_{\mathbb{H}}$ (left column) and the corresponding behaviour of the slits (right column).}

\end{table}

\subsection{Spirals.} At first, assume that $P_{\mathbb{H}}$ has a complex root $\beta\in\mathbb{H}$. Utilizing Proposition $\ref{possibilities}$, equation $(\ref{differential})$ may be written as 
$$\left(\dfrac{1}{v-\beta}+\dfrac{e^{2i\psi}}{v-\bar{\beta}}-\sum_{n=1}^{+\infty}\dfrac{\abs{\frac{P_{\mathbb{H}}'(\beta)}{P_{\mathbb{H}}'(\lambda_n)}}e^{i\psi}}{v-\lambda_n}\right)dv=\dfrac{P_{\mathbb{H}}'(\beta)}{2(1-t)}dt.$$
Setting $\alpha_n:=\abs{\frac{P_{\mathbb{H}}'(\beta)}{P_{\mathbb{H}}'(\lambda_n)}}$, it is straightforward by Proposition \ref{possibilities} that the sequence $(-a_ne^{i\psi})_{n\ge1}$ is absolutely summable. Hence, through a slight reformulation of Lemma \ref{analyticityofh}, the infinite product $\prod_{n=1}^{+\infty}(\frac{z-\lambda_n}{z_0-\lambda_n})^{-\alpha_ne^{i\psi}}$ is well defined. Therefore, the preceding equation gives the implicit solution $h(v(z,t))=(1-t)^{-\frac{P_{\mathbb{H}}'(\beta)}{2}}h(z)$, where
\begin{equation}\label{koenigs1}
    h(z)=e^{\phi(z)}=\dfrac{z-\beta}{z_0-\beta}\left(\dfrac{z-\bar{\beta}}{z_0-\bar{\beta}}\right)^{e^{2i\psi}}\prod_{n=1}^{+\infty}\left(\dfrac{z-\lambda_n}{z_0-\lambda_n}\right)^{-\alpha_ne^{i\psi}}
\end{equation}
for all $z\in\mathbb{H}$.
\begin{proposition}\label{univalenceofh1}
    The function $h$ defined by (\ref{koenigs1}) is analytic and univalent in the upper half-plane. In particular, $h$ is a $\psi$-spirallike function of $\mathbb{H}$, with $\psi\in(-\frac{\pi}{2},\frac{\pi}{2})$.
\end{proposition}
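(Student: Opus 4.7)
The plan is to verify the analytic criterion for $\psi$-spirallikeness of Theorem \ref{spirallikeinH} directly for $h$; this will yield univalence automatically, together with the geometric picture announced in the statement.

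First, I would establish analyticity of $h$ on $\mathbb{H}$. The absolute summability of $(\alpha_n e^{i\psi})_{n\ge 1}$, which follows from the convergence of $\sum 1/P_{\mathbb{H}}'(\lambda_n)$ recorded in Proposition \ref{possibilities}(1), allows me to invoke Lemma \ref{analyticityofh} on the logarithm of the infinite product in (\ref{koenigs1}); hence that product is analytic and non-vanishing on $\mathbb{H}$. The factor $((z-\bar\beta)/(z_0-\bar\beta))^{e^{2i\psi}}$ is analytic and non-vanishing there as well, since $z - \bar\beta \in \mathbb{H}$ for every $z \in \mathbb{H}$ admits a single-valued logarithm. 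Only the linear factor $(z-\beta)/(z_0-\beta)$ contributes a zero to $h$, and it is simple; hence $h(z) = 0$ iff $z = \beta$ and $h'(\beta) \neq 0$.

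Next, I would compute the logarithmic derivative. Differentiating (\ref{koenigs1}) term-by-term and comparing with the partial fraction expansion (\ref{complex root}) multiplied by $P_{\mathbb{H}}'(\beta)$, I obtain
$$\frac{h'(z)}{h(z)} = \frac{P_{\mathbb{H}}'(\beta)}{P_{\mathbb{H}}(z)},\qquad z\in\mathbb{H}\setminus\{\beta\}.$$
In the notation of Lemma \ref{FGH}(1), this yields
$$\frac{(z-\beta)(z-\bar\beta)\,h'(z)}{h(z)} = \frac{P_{\mathbb{H}}'(\beta)}{F(z,\beta)}.$$
Because both $\beta$ and $\bar\beta$ are roots of $P_{\mathbb{H}}$, the boundary terms in Lemma \ref{FGH}(1) disappear, leaving
$$F(z,\beta) = \sum_{n=1}^{+\infty}\frac{4b_n}{(z-k_n)|\beta-k_n|^2},$$
so that for $z\in\mathbb{H}$, $\mathrm{Im}\,F(z,\beta) = -\mathrm{Im}(z)\sum_n 4b_n/(|z-k_n|^2|\beta-k_n|^2) < 0$.

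Finally, I would assemble the pieces. Writing $P_{\mathbb{H}}'(\beta) = |P_{\mathbb{H}}'(\beta)|e^{i\psi}$, the factor $e^{-i\psi}$ in the criterion of Theorem \ref{spirallikeinH} cancels the phase and reduces the required inequality to
$$\mathrm{Im}\left(\frac{|P_{\mathbb{H}}'(\beta)|}{F(z,\beta)}\right) = -\frac{|P_{\mathbb{H}}'(\beta)|\,\mathrm{Im}\,F(z,\beta)}{|F(z,\beta)|^2} > 0,$$
which holds on all of $\mathbb{H}$ by the previous step. Theorem \ref{spirallikeinH} then gives $\psi$-spirallikeness of $h$, and hence univalence; the bound $\psi \in (-\pi/2,\pi/2)$ was already established in Proposition \ref{possibilities}(1). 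I expect no serious obstacle: the main care goes into the complex exponent on $(z-\bar\beta)$ and the infinite product, both controlled by Lemma \ref{analyticityofh}, and into recognising that multiplication by $P_{\mathbb{H}}'(\beta)$ is precisely the operation that kills the phase and collapses the sign check onto Lemma \ref{FGH}(1).
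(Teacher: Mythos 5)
Your proposal is correct and follows essentially the same route as the paper: analyticity via Lemma \ref{analyticityofh}, the identity $h'/h = P_{\mathbb{H}}'(\beta)/P_{\mathbb{H}}$, and the sign of $\mathrm{Im}\,F(z,\beta)$ from Lemma \ref{FGH}(1) feeding into the criterion of Theorem \ref{spirallikeinH}. The extra details you supply (checking $h$ vanishes only at $\beta$ with $h'(\beta)\neq 0$, and the explicit formula for $\mathrm{Im}\,F(z,\beta)$) are accurate and only make the argument more complete.
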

\begin{proof}
    As before, by Lemma \ref{analyticityofh}, the infinite product converges locally uniformly, therefore analyticity follows directly. In general, if a sequence of analytic functions converges locally uniformly, then so does the sequence of the derivatives. Hence, differentiating $h$, we interchange limit and derivative and then apply Proposition \ref{possibilities} to get 
    $$\dfrac{h'(z)}{h(z)}=\dfrac{1}{z-\beta}+\dfrac{e^{2i\psi}}{z-\bar{\beta}}+\sum_{n=1}^{+\infty}\dfrac{-\alpha_ne^{i\psi}}{z-\lambda_n}=\dfrac{P'_{\mathbb{H}}(\beta)}{P_{\mathbb{H}}(z)}$$
for all $z\in\mathbb{H}$. Using Lemma \ref{FGH} and keeping in mind that $\textrm{Im}(F(z,\beta))<0$ for all $z\in\mathbb{H}$, we have that 
$$\text{Im}\left(e^{-i\psi}\dfrac{(z-\beta)(z-\bar{\beta})h'(z)}{h(z)}\right)=\text{Im}\left(\dfrac{\abs{P_{\mathbb{H}}'(\beta)}}{F(z,\beta)}\right)>0,$$
for all $z\in\mathbb{H}$. As a consequence, according to Theorem \ref{spirallikeinH}, $h$ is a $\psi$-spirallike (and by extension univalent) function of the upper half-plane.
\end{proof}
Using the preceding proposition, the univalence of $h$ shows that 
$$v(z,t)=h^{-1}\left((1-t)^{-\frac{P'_\mathbb{H}(\beta)}{2}}h(z)\right),$$
for all $z\in\mathbb{H}$. In other words, returning back to $w=(1-t)^{\frac{1}{2}}v$ and then to $f=w^{-1}$, we have that the PDE (\ref{PDE1}) is satisfied by the function 
$$f(z,t)=h^{-1}\left((1-t)^{\frac{P'_\mathbb{H}(\beta)}{2}}h\left((1-t)^{-\frac{1}{2}}z\right)\right),$$
for all $z\in\mathbb{H}$ and $t\in[0,1)$.

\begin{rmk}\label{welldefined}
    Note at this point that $f$ is independent of the choice of $z_0\in\mathbb{H}$. Indeed, if we denote by $\phi_{z_0}(z)=\int_{z_0}^{z}\frac{1}{P_{\mathbb{H}}(\zeta)}d\zeta$, then we have that $\phi'_{z_0}=\phi'_{z'_0}$, for any other choice $z_0'$. Hence, $h_{z_0}=ch_{z_0'}$, for some constant number $c$. However, due to the conjugation formula above, we have that 

\begin{align*}
    f_{z_0}(z,t):&=h_{z_0}^{-1}\left((1-t)^{\frac{P'_\mathbb{H}(\beta)}{2}}h_{z_0}\left((1-t)^{-\frac{1}{2}}z\right)\right)\\
    &=h_{z_0'}^{-1}\left((1-t)^{\frac{P'_\mathbb{H}(\beta)}{2}}h_{z_0'}\left((1-t)^{-\frac{1}{2}}z\right)\right)=:f_{z_0'}(z,t)
\end{align*}
and thus $f$ is well defined. 
\end{rmk}

\begin{lemma}\label{image of h1}
    The function $h$ defined by $(\ref{koenigs1})$ maps the upper half-plane onto the complement of infinitely many logarithmic spirals joining the tip points $h(k_n)$ to the point at infinity. Moreover, $h(x)\rightarrow\infty$, as $x\rightarrow\infty$.
\end{lemma}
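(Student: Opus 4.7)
\textit{Proof proposal.} The plan is to exploit the identity $h'/h=P_\mathbb{H}'(\beta)/P_\mathbb{H}$ established in Proposition~\ref{univalenceofh1}, together with a careful boundary analysis of $h$ along $\mathbb{R}$. First, by Lemma~\ref{analyticityofh} the product defining $h$ extends continuously to $\overline{\mathbb{H}}\setminus\overline{(\lambda_n)_{n\ge1}}$; since $k_n\neq\lambda_m$ for all $n,m$, each tip point $h(k_n)$ is a finite, nonzero complex number.

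The central observation is that on any open subinterval $J\subset\mathbb{R}$ disjoint from $\{k_n\}\cup\{\lambda_n\}$, the value $P_\mathbb{H}(x)$ is real and nonzero, so
$$\arg\!\left(\frac{h'(x)}{h(x)}\right)\equiv\psi\pmod{\pi}.$$
Integrating, $\log h(J)$ lies on an affine line of direction $e^{i\psi}$, which is precisely the statement that $h(J)$ is a subarc of a logarithmic $\psi$-spiral centered at the origin. Next I would pin down the endpoints of each arc. From the product formula $h(z)\sim c_n(z-\lambda_n)^{-\alpha_n e^{i\psi}}$ near each standard root $\lambda_n$, and since $\alpha_n\cos\psi>0$, this gives $|h(x)|\to+\infty$ as $x\to\lambda_n^{\pm}$. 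At each $k_n$, continuity gives a finite $h(k_n)\neq0$. Finally, from $h'/h\sim P_\mathbb{H}'(\beta)/x$ as $|x|\to\infty$, integration produces $|h(x)|\sim|x|^{|P_\mathbb{H}'(\beta)|\cos\psi}\to+\infty$, which already settles the second statement $h(x)\to\infty$.

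To identify each slit I would analyze the behavior of $h$ at each $k_n$ up to second order. Near $z=k_n$ the function $P_\mathbb{H}$ has a simple pole with residue $4b_n$, hence $h'(k_n)=0$ and a direct computation gives $h''(k_n)=P_\mathbb{H}'(\beta)h(k_n)/(4b_n)$. Consequently, for real $x$ close to $k_n$,
$$h(x)-h(k_n)\approx\tfrac{1}{8b_n}\,P_\mathbb{H}'(\beta)\,h(k_n)\,(x-k_n)^{2},$$
so $h(x)$ leaves $h(k_n)$ from \emph{both} sides in the common direction $e^{i\psi}h(k_n)$, i.e.\ the outward tangent of the unique $\psi$-spiral through $h(k_n)$. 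Combined with the spiral property from the previous paragraph, the two arcs $h(I_{n'})$ and $h(I_n)$ glue into the two sides of a single slit, namely the $\psi$-spiral joining $h(k_n)$ to $\infty$. Univalence of $h$ then forces slits coming from distinct $k_n$'s to be disjoint, and a prime-end boundary-correspondence argument identifies $h(\mathbb{H})$ with the complement of the union of these spirals. The main obstacle is the gluing at each $h(k_n)$ and, more subtly, the global identification of $h(\mathbb{H})$: since $h(\mathbb{H})$ is not a Jordan domain, Carath\'eodory's extension does not apply directly, and one has to argue via prime ends (together with the fact, which should follow from condition~(\ref{initial condition}), that the countably many spirals accumulate only at $\infty$).
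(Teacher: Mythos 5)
Your treatment of the first assertion is sound and is essentially the paper's argument in differential form: the paper integrates $h'/h=P_{\mathbb{H}}'(\beta)/P_{\mathbb{H}}$ explicitly and writes $h(x)=h(k_m)\exp\bigl(e^{i\psi}S_m(x)\bigr)$ on each interval between consecutive points of $\{k_n\}\cup\{\lambda_n\}$ (with $S_m$ real-valued), which is exactly your observation that $\arg(h'(x)/h(x))\equiv\psi\pmod{\pi}$ there. Your local expansions at $\lambda_n$ (where $|h|\to\infty$ since $\alpha_n\cos\psi>0$) and at $k_n$ (critical point, both one-sided arcs leaving $h(k_n)$ in the direction $e^{i\psi}h(k_n)$) are correct and make the gluing explicit, something the paper leaves implicit. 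Note also that the global identification you worry about at the end does not require a separate prime-end argument: Proposition \ref{univalenceofh1} already gives that $h$ is univalent and $\psi$-spirallike, so $h(\mathbb{H})$ is a $\psi$-spirallike domain and, once the boundary is computed, its complement is forced to be the stated union of spiral slits; this is why the paper opens with ``it suffices to find the image of the real line.''

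There is, however, a genuine gap in your proof of the second assertion. You write $h'(x)/h(x)\sim P_{\mathbb{H}}'(\beta)/x$ as $|x|\to\infty$ and integrate to get $|h(x)|\sim|x|^{|P_{\mathbb{H}}'(\beta)|\cos\psi}$. This asymptotic is false along the real axis: the poles $k_n$ and the zeros $\lambda_n$ of $P_{\mathbb{H}}$ accumulate at $\pm\infty$, so $P_{\mathbb{H}}(x)$ is neither comparable to $x$ nor even bounded away from $0$ and $\infty$ on any neighborhood of $\infty$ in $\mathbb{R}$. What actually has to be shown is that $\inf_{|x|\ge R}|h(x)|\to\infty$ as $R\to\infty$ — in particular that the tip points satisfy $|h(k_n)|\to\infty$, which is the nontrivial content (near each $\lambda_n$ the blow-up is automatic, but near each $k_n$ the value of $|h|$ is governed by the competition between the term $2\cos\psi\log|x-\beta|$ and the series $\sum_n a_n\log\left|\tfrac{x-\lambda_n}{z_0-\lambda_n}\right|$, whose $n$-th term is large and negative when $x$ is close to $\lambda_n$). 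The paper handles this by writing out $\log|h(x)|$, invoking the identity $\sum_n a_n = \tfrac{1}{\cos\psi}\bigl(\ldots\bigr)$ from Proposition \ref{possibilities}, and splitting the series according to whether $\lambda_n\ge\mathrm{Re}\,\beta+1$ or not, so as to bound it below by a convergent series independent of $x$. Some uniform lower bound of this kind is unavoidable, and your proposal as written does not supply it.
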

\begin{proof}
    By Proposition \ref{univalenceofh1}, it suffices to find the image of the real line under $h$. By $(\ref{koenigs1})$, using the elementary trigonometric identities $e^{2i\psi}+1=2\cos(\psi)e^{i\psi}$ and $e^{2i\psi}-1=2i\sin(\psi)e^{i\psi}$, a series of straightforward computations shows that for each $m\ge1$, $h(x)=h(k_m)C\exp{(e^{i\psi}S_m(x)-ie^{i\psi}\sum_{n=1}^{+\infty}a_n\mathrm{Arg}(\frac{x-\lambda_n}{k_m-\lambda_n}))}$, where
    $$S_m(x)=2\log\left|\dfrac{x-\beta}{k_m-\beta}\right|\cos(\psi)+2\mathrm{Arg}\left(\dfrac{x-\beta}{k_m-\beta}\right)\sin(\psi)-\sum_{n=1}^{+\infty}a_n\log\left|\dfrac{x-\lambda_n}{k_m-\lambda_n}\right|$$
    and $C\in\mathbb{C}$ is some absolute constant. Assume, for the sake of simplicity, that $C=1$ (see Remark \ref{welldefined}). Now, let $\lambda_{m^*}\in\mathbb{R}$ be the largest $\lambda_j$, so that $k_m\in(\lambda_{m^*},\lambda_m)$. Restricting this interval, we see that $h(x)=h(k_m)\exp(e^{i\psi}S_m(x))$, which implies the first part of the lemma. 
    
    For the second part, again by straightforward calculations, we have that
    \begin{align*}
        \log\abs{h(x)}&=\cos(\psi)\left(2\log\abs{x-\beta}\cos(\psi)-\sum_{n=1}^{+\infty}a_n\log\left|\dfrac{x-\lambda_n}{z_0-\lambda_n}\right|\right)\\
        &+2\cos(\psi)\sin(\psi)\mathrm{Arg}(x-\beta)+\sin(\psi)\sum_{n=1}^{+\infty}a_n\mathrm{Arg}\left(\dfrac{x-\lambda_n}{z_0-\lambda_n}\right).
    \end{align*}
Recall that the only possible accumulation points of the sequence $(\lambda_n)_{n\ge1}$ are $\pm\infty$. Let us assume, without loss of generality, that it accumulates at both $+\infty$ and $-\infty$. We will show that $\abs{h(x)}\rightarrow\infty$, as $x\rightarrow+\infty$. Consider large enough $x>0$, say $x>\mathrm{Re}\beta+1$. By the first part of Proposition \ref{possibilities} and the fact the $\cos(\psi)>0$, we get that 

 \begin{align*}
        \log\abs{h(x)}&\approx\log\abs{x-\beta}\abs{P_{\mathbb{H}}'(\beta)}+\sum_{n=1}^{+\infty}a_n\log\left|\dfrac{x-\beta}{x-\lambda_n}(z_0-\lambda_n)\right|\\
        &\ge\log\abs{x-\beta}\abs{P_{\mathbb{H}}'(\beta)}+\sum_{n=1}^{+\infty}a_n\log\left|\dfrac{x-\mathrm{Re}\beta}{x-\lambda_n}(z_0-\lambda_n)\right|,
    \end{align*}
where the symbol $\approx$ denotes comparability. We will prove that the preceding sum is bounded below, therefore the right-hand side tends to infinity. To see this, we split the sum to those indices such that $\lambda_n>\mathrm{Re}\beta+1$ and the rest. It is easy to see that for those indices, we have that $\abs{\frac{x-\mathrm{Re}\beta}{x-\lambda_n}}\ge\min\{1,\frac{1}{\lambda_n-\mathrm{Re}\beta-1}\}=\frac{1}{\lambda_n-\mathrm{Re}\beta-1}$, for large $n$, since $(\lambda_n)_{n\ge1}$ accumulate at $\infty$. This implies that
$$\sum_{\lambda_n\ge\mathrm{Re}\beta+1}a_n\log\left|\dfrac{x-\mathrm{Re}\beta}{x-\lambda_n}(z_0-\lambda_n)\right|\ge\sum_{\lambda_n\ge\mathrm{Re}\beta+1}a_n\log\left|\dfrac{z_0-\lambda_n}{\mathrm{Re}\beta+1-\lambda_n}\right|.$$
The case for the indices so that $\lambda_n<0$ is easier and one can see that $\log|\frac{x-\mathrm{Re}\beta}{x-\lambda_n}|$ is positive. Combining everything together, we deduce that $\abs{h(x)}\rightarrow\infty$, as $x\rightarrow+\infty$ and similarly when $x$ tends to $-\infty$, the result follows. 
\end{proof}

 \begin{figure}[h]
        \centering
        \includegraphics[scale=0.4]{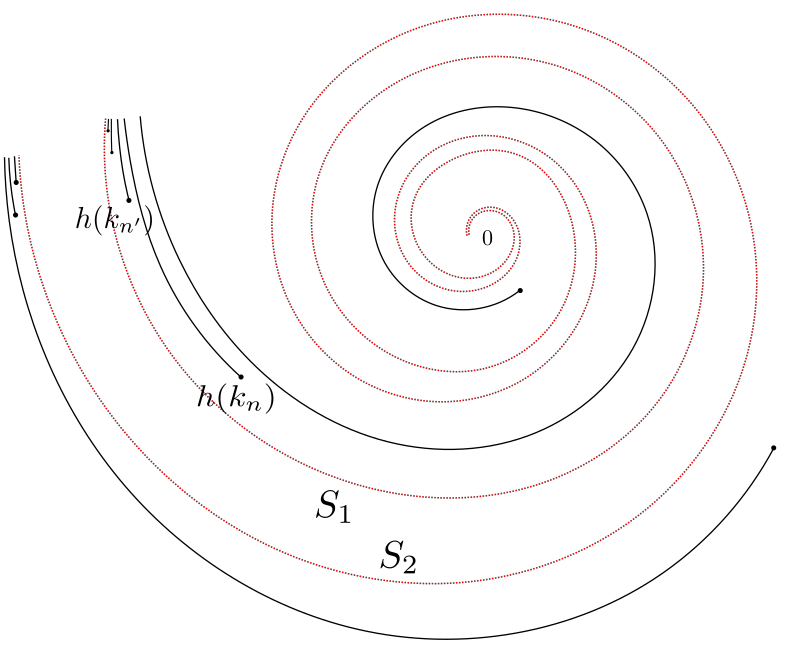}
        \caption{The image of $h$, when $P_{\mathbb{H}}$ has a complex root.}
        \label{fig:spirallikesector}
    \end{figure}

By Lemma \ref{image of h1} we see that the image of $h$ is the complement of infinitely many spirals with tip points $h(k_n)$, so that $h(h_n)\rightarrow\infty$, as in Figure \ref{fig:spirallikesector}. One can also show that there exists a \textit{spirallike sector} of angle $\psi$ and amplitude $\frac{1}{\cos(\psi)}\sum_{n=1}^{+\infty}a_n\pi$, so that it does not contain any spirals $\Gamma(s)=h(k_n)e^{e^{i\psi}s}$, $s\in\mathbb{R}$ (see Figure \ref{fig:spirallikesector}). Such a sector is defined as the set $\mathrm{Spir}[\psi,\alpha,\theta_0]=\{e^{e^{i\psi}t+i\theta}:t\in\mathbb{R}, \theta\in(\theta_0-\alpha,\theta_0+\alpha)\}$ (see \cite{bra}, p. 385). Note that in order to determine the amplitude of the sector, this is done by computing the amplitude of the points of intersection of the boundary spirals $S_1$, $S_2$ of the sector, with the unit circle. Thus, if $\Gamma(s_{0,n})\in\partial\mathbb{D}$, where $s_{0,n}=-\frac{1}{\cos(\psi)}\log\abs{h(k_n)}$, we then let $\zeta_1:=\lim_{k_n\rightarrow+\infty}\Gamma(s_{0,n})$ and $\zeta_2:=\lim_{k_n\rightarrow-\infty}\Gamma(s_{0,n})$ to be the points $S_1\cap\partial\mathbb{D}$ and $S_1\cap\partial\mathbb{D}$ respectively. It is, now, a matter of straightforward calculations to determine the points $\zeta_j\in\partial\mathbb{D}$, thus the amplitude of the sector. 
 
\subsection{Non-tangential intersections.} Assume that $P_{\mathbb{H}}$ has two distinct real roots $\rho_1,\rho_2$ in some interval of $P_{\mathbb{H}}$. Relabeling, if necessary, we assume that $\lambda_m<\rho_1<\rho_2 $ if $\rho_1,\rho_2$ lie in a bounded interval $I_m$, $\rho_1<\rho_2$ if they lie in the left unbounded interval, or $\rho_2<\rho_1$ if they lie in the right unbounded interval. This way, we ensure that  $P'_{\mathbb{H}}(\rho_1)>0>P'_{\mathbb{H}}(\rho_2)$. For the rest we will cover the first two cases where $\rho_1<\rho_2$. Then, the Möbius transform $T(z)=\frac{z-\rho_2}{z-\rho_1}$ maps the upper half-plane onto itself. We shall use this fact shortly. For the case where we have a right unbounded interval, thus $\rho_2<\rho_1$, the results of this section follow similarly by considering $-T$. We know that $P_{\mathbb{H}}'(\lambda_n)<0$ for all $n\ge1$. We then work as follows. With the help of Proposition \ref{possibilities}, the ODE (\ref{differential}) is written as 
$$\left(\dfrac{-1}{v-\rho_1}+\dfrac{\frac{P'_{\mathbb{H}}(\rho_1)}{\abs{P'_{\mathbb{H}}(\rho_2)}}}{v-\rho_2}+\sum_{n=1}^{+\infty}\dfrac{\frac{P'_{\mathbb{H}}(\rho_1)}{\abs{P'_{\mathbb{H}}(\lambda_n)}}}{v-\lambda_n}\right)dv=\dfrac{-P'_{\mathbb{H}}(\rho_1)}{2(1-t)}dt.$$
Hence, considering after integration the function
\begin{equation}
    \label{koenigs2}
    h(z)=\dfrac{(z-\rho_2)^b}{z-\rho_1}\prod_{n=1}^{+\infty}\left(\dfrac{z-\lambda_n}{z_0-\lambda_n}\right)^{a_n}
\end{equation}
for all $z\in\mathbb{H}$, where $b:=\frac{P'_{\mathbb{H}}(\rho_1)}{\abs{P'_{\mathbb{H}}(\rho_2)}}>0$ and $a_n:=\frac{P'_{\mathbb{H}}(\rho_1)}{\abs{P'_{\mathbb{H}}(\lambda_n)}}>0$, $n\ge1$, we deduce the implicit solution $h(v(z,t))=(1-t)^{\frac{P'_\mathbb{H}(\rho_1)}{2}}h(z)$.
\begin{proposition}
    \label{univalenceofh2}
   The function $h$ defined by $(\ref{koenigs2})$ is analytic and univalent in the upper half-plane.
\end{proposition}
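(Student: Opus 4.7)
My plan is to follow the template of Proposition~\ref{univalenceofh1}, adapted to the fact that now the distinguished zero $\rho_2$ and pole $\rho_1$ of $h$ sit on $\partial\mathbb{H}$ rather than inside $\mathbb{H}$. For analyticity, I would write $h(z)=h(z_0)\exp(\Psi(z))$ with
$$\Psi(z)=b\log\frac{z-\rho_2}{z_0-\rho_2}-\log\frac{z-\rho_1}{z_0-\rho_1}+\sum_{n=1}^{\infty}a_n\log\frac{z-\lambda_n}{z_0-\lambda_n}.$$
The summability of $(a_n)_{n\ge1}$, which is guaranteed by Proposition~\ref{possibilities}(2), together with Lemma~\ref{analyticityofh}, yields locally uniform convergence of the series on $\mathbb{H}$, while the first two logarithms are manifestly analytic. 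Differentiating termwise and comparing with the partial-fraction expansion~(\ref{distinct roots}) of Proposition~\ref{possibilities}, I get
$$\frac{h'(z)}{h(z)}=-\frac{1}{z-\rho_1}+\frac{b}{z-\rho_2}+\sum_{n=1}^{\infty}\frac{a_n}{z-\lambda_n}=-\frac{P_{\mathbb{H}}'(\rho_1)}{P_{\mathbb{H}}(z)}.$$
In case (2) $P_{\mathbb{H}}$ has only real roots, so $P_{\mathbb{H}}$ does not vanish on $\mathbb{H}$; hence $h'(z)\neq 0$ throughout $\mathbb{H}$, giving local univalence.

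Next I would establish a definite-sign condition on $(z-\rho_1)(z-\rho_2)h'(z)/h(z)$, in analogy with the spirallike case. Applying Lemma~\ref{FGH}(2) at the roots $\rho_1,\rho_2$ of $P_{\mathbb{H}}$ (so that the leading bracket vanishes) produces
$$H(z,\rho_1,\rho_2)=\sum_{n=1}^{\infty}\frac{4b_n}{(z-k_n)(\rho_1-k_n)(\rho_2-k_n)}.$$
Since $\rho_1,\rho_2$ lie in a common interval of $P_{\mathbb{H}}$, every factor $(\rho_1-k_n)(\rho_2-k_n)$ is positive, so $\mathrm{Im}\,H(z,\rho_1,\rho_2)<0$ for $z\in\mathbb{H}$. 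Combining this with the logarithmic derivative gives $(z-\rho_1)(z-\rho_2)h'(z)/h(z)=-P_{\mathbb{H}}'(\rho_1)/H(z,\rho_1,\rho_2)$, and since $P_{\mathbb{H}}'(\rho_1)>0$ I conclude
$$\mathrm{Im}\left((z-\rho_1)(z-\rho_2)\frac{h'(z)}{h(z)}\right)<0\quad\text{on }\mathbb{H}.$$

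To turn this into univalence, my plan is to compose $h$ with the Möbius automorphism $T(z):=(z-\rho_2)/(z-\rho_1)$ of $\mathbb{H}$, which sends $\rho_2\mapsto 0$ and $\rho_1\mapsto\infty$. Setting $g:=h\circ T^{-1}$ and using the identity $z-\rho_1=(\rho_2-\rho_1)/(1-w)$ with $w=T(z)$, a direct computation yields
$$w\,\frac{g'(w)}{g(w)}=\frac{(z-\rho_1)(z-\rho_2)}{\rho_2-\rho_1}\cdot\frac{h'(z)}{h(z)},$$
so that $\mathrm{Im}\,(wg'(w)/g(w))<0$ on $\mathbb{H}$. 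This is precisely the half-plane analog of the characterisation of $g$ as starlike with respect to its boundary zero at $w=0$. The main obstacle is this final step: Theorem~\ref{spirallikeinH} cannot be invoked verbatim, because the distinguished zero now lies on $\partial\mathbb{H}$ rather than in $\mathbb{H}$. I would deduce univalence of $g$ (and hence of $h=g\circ T$) either by transferring the inequality to the unit disc via the Cayley map and invoking the classical criterion for functions starlike with respect to a boundary point, or by a direct argument-principle computation: the boundary values of $h$ on $\mathbb{R}$ have arguments that are step functions whose total jump equals $\pi(1-b-\sum_{n}a_n)=\pi P_{\mathbb{H}}'(\rho_1)$, which, together with the local injectivity, forces $h|_{\mathbb{R}}$ to bound a Jordan image.
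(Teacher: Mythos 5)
Your setup is sound and coincides with the paper's: analyticity via Lemma \ref{analyticityofh}, the identity $h'/h=-P_{\mathbb{H}}'(\rho_1)/P_{\mathbb{H}}$, the absence of zeros of $P_{\mathbb{H}}$ in $\mathbb{H}$, and the inequality $\mathrm{Im}\left((z-\rho_1)(z-\rho_2)h'(z)/h(z)\right)<0$, equivalently $\mathrm{Im}(wg'(w)/g(w))<0$ for $g=h\circ T^{-1}$, are all correct. The difficulty is that this is where your argument effectively stops, and the inequality you have derived does not by itself imply univalence: the function $g(w)=w^{-ic}$, $c>0$ (principal branch on $\mathbb{H}$), satisfies $\mathrm{Im}(wg'(w)/g(w))=-c<0$ throughout $\mathbb{H}$, yet $g(i)=g(e^{2\pi/c}i)=e^{c\pi/2}$. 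For the same reason your first proposed patch cannot work as stated: the classical criterion for univalent functions starlike with respect to a boundary point (Robertson's condition $\mathrm{Re}[2zf'(z)/f(z)+(1+z)/(1-z)]>0$ in the disc) contains an extra M\"obius summand precisely because the bare sign condition on $zf'/f$ is insufficient. Your second patch, the argument-principle route, would in substance require the full boundary analysis of Lemma \ref{image of h2} together with a degree argument adapted to the accumulation of the $\lambda_n$ at $\pm\infty$; it is not carried out. So the proposal has a genuine gap at the decisive step.

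The paper closes this step with one more identity from Lemma \ref{FGH} that your computation misses: $T(z)H(z,\rho_1,\rho_2)=G(z,\rho_1)$. Writing
\begin{equation*}
\dfrac{(h\circ T^{-1})'(w)}{(h\circ T^{-1})(w)}=\dfrac{P_{\mathbb{H}}'(\rho_1)}{\rho_1-\rho_2}\cdot\dfrac{1}{w\,H(T^{-1}(w),\rho_1,\rho_2)}=\dfrac{P_{\mathbb{H}}'(\rho_1)}{\rho_1-\rho_2}\cdot\dfrac{1}{G(T^{-1}(w),\rho_1)},
\end{equation*}
and using $G(u,\rho_1)=\frac{P_{\mathbb{H}}'(\rho_1)}{u-\rho_1}+\sum_{n}\frac{4b_n}{(u-k_n)(\rho_1-k_n)^2}$ (the second-order term drops out because $P_{\mathbb{H}}(\rho_1)=0$, and all coefficients are positive since $P_{\mathbb{H}}'(\rho_1)>0$), one finds that the logarithmic derivative of $h\circ T^{-1}$ itself---with no stray factor of $w$---has imaginary part of constant sign on $\mathbb{H}$. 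This is a Noshiro--Warschawski-type condition on $\log(h\circ T^{-1})$ over the convex domain $\mathbb{H}$, and it is from this that the paper concludes univalence. If you prefer to keep your two-point quantity, you must either convert it into this one-point condition via the identity above, or supplement it with the information that $\arg h$ ranges over an interval of length strictly less than $2\pi$ (which is only established later, in Lemma \ref{image of h2}); your inequality does give univalence of $\log(h\circ T^{-1})$ (compose with $\exp$ on a strip and apply Noshiro--Warschawski), but passing from the univalence of the logarithm to that of the exponential requires exactly this extra control on the imaginary part.
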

\begin{proof}
    By Lemma $\ref{analyticityofh}$, $h$ is well defined and analytic in the upper half-plane. To prove that it is univalent, at first we observe that  $\frac{h'(z)}{h(z)}=\frac{-P'_{\mathbb{H}}(\rho_1)}{P_{\mathbb{H}}(z)}$. Consider, now, the Möbius transform $T(z)=\frac{z-\rho_2}{z-\rho_1}:\mathbb{H}\rightarrow\mathbb{H}$. Taking into account the elementary identity  $(T^{-1}(z)-\rho_1)(T^{-1}(z)-\rho_2)=(\rho_2-\rho_1)z(T^{-1})'(z)$, we then deduce that
    $$\dfrac{(h\circ T^{-1})'(z)}{h\circ T^{-1}(z)}=\dfrac{P'_{\mathbb{H}}(\rho_1)}{\rho_1-\rho_2}\dfrac{(T^{-1}(z)-\rho_1)(T^{-1}(z)-\rho_2)}{zP_{\mathbb{H}}(T^{-1}(z))}=\dfrac{P'_{\mathbb{H}}(\rho_1)}{\rho_1-\rho_2}\dfrac{1}{zH(T^{-1}(z),\rho_1,\rho_2)}$$
for all $z\in\mathbb{H}$, where $H$ is given by Lemma $\ref{FGH}$. Next, we shall show that the above quotient has constant sign in the upper half-plane. It suffices to show that $\text{Im}(T(z)H(z,\rho_1,\rho_2))<0$ for all $z\in\mathbb{H}$. But we observe, again by Lemma \ref{FGH}, that $T(z)H(z,\rho_1,\rho_2)=G(z,\rho_1)$. As a result, since $\rho_1$ is a root and since $P'_{\mathbb{H}}(\rho_1)>0$, we have that $\text{Im}G(z,\rho_1)<0$. Thus, the imaginary part of the logarithmic derivative of $h\circ T^{-1}$ has constant sign, so it is univalent and this concludes the proof.
\end{proof}

\begin{lemma}\label{image of h2}
The function $h$ maps the upper half-plane onto $\mathbb{H}\setminus\bigcup_{n=1}^{\infty}[0,h(k_n)]$, where $(0,h(k_n)]\subset\mathbb{H}$ is a line segment emanating from the origin to the tip point $h(k_n)$. Moreover, we have that $h(k_n)\rightarrow0$, as $n\rightarrow+\infty$.
\end{lemma}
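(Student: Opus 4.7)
My plan is to determine the image $h(\mathbb{H})$ by analyzing the boundary behaviour of $h$ on $\mathbb{R}$. By Lemma~\ref{analyticityofh}, $h$ extends continuously to $\overline{\mathbb{H}}\setminus\overline{(\lambda_n)_{n\ge 1}}$, vanishing at each $\lambda_n$ and at $\rho_2$, having a simple pole at $\rho_1$, and taking finite nonzero values at each $k_n$. The key input is the identity
\[
\frac{h'(z)}{h(z)} \;=\; -\frac{P'_{\mathbb{H}}(\rho_1)}{P_{\mathbb{H}}(z)},
\]
already established in the proof of Proposition~\ref{univalenceofh2}. Restricted to $\mathbb{R}\setminus(\{\rho_1,\rho_2\}\cup(\lambda_n)_n)$ this quantity is real-valued, so on each connected component of that set $\arg h(x)$ is constant while the monotonicity of $\log|h(x)|$ is dictated by the sign of $-P'_{\mathbb{H}}(\rho_1)/P_{\mathbb{H}}(x)$.

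Using the principal-branch factorization~\eqref{koenigs2}, I then read off the jumps of $\arg h(x)$ across each singularity: as $x$ decreases through $\lambda_n$, the argument increases by $a_n\pi$; through $\rho_2$, by $b\pi$; and through $\rho_1$, it decreases by $\pi$. A sign analysis of $P_{\mathbb{H}}$ on $\mathbb{R}$, using $P'_{\mathbb{H}}(\lambda_n)<0$, $P'_{\mathbb{H}}(\rho_1)>0$, $P'_{\mathbb{H}}(\rho_2)<0$ together with the pole structure of $P_{\mathbb{H}}$ at each $k_n$, yields the geometric picture: on any interval $(\lambda_j,\lambda_{j'})$ bounded by two consecutive standard roots of $P_{\mathbb{H}}$ in $\mathbb{R}$ and containing a single pole $k_{j'}$, the function $P_{\mathbb{H}}$ is negative then positive, so $\log|h|$ increases from $-\infty$ at $\lambda_j$ to a single maximum $\log|h(k_{j'})|$ at $k_{j'}$ and then decreases back to $-\infty$ at $\lambda_{j'}$; since $\arg h$ is constant throughout, $h$ wraps the interval onto the segment $[0,h(k_{j'})]$, traversed once in each direction. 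On the three exceptional sub-intervals inside $I_m$, the same analysis shows that $(\lambda_m,\rho_1)$ and $(\rho_1,\rho_2)$ map to two opposite rays from the origin out to infinity (the $-\pi$ jump at $\rho_1$ producing the direction reversal), together forming a single line through $0$, while $(\rho_2,\lambda_{m'})$ contributes the further slit $[0,h(k_{m'})]$.

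Combining these pieces, $h(\mathbb{R})$ equals a line through $0$ together with countably many segments $[0,h(k_n)]$ attached on one side of it. By the univalence of $h$ (Proposition~\ref{univalenceofh2}), $h(\mathbb{H})$ must then be the corresponding open half-plane with those slits removed. That this half-plane is precisely $\mathbb{H}$ (and not its reflection) is confirmed by checking the sign of $\operatorname{Im} h(z_1)$ at one convenient point $z_1\in\mathbb{H}$; the bounds $b,\,a_n\in(0,1)$, implicit in the normalization $b+\sum_n a_n = 1-P'_{\mathbb{H}}(\rho_1)\in(0,1)$ from Proposition~\ref{possibilities}(2), force all slit directions to lie strictly between the two boundary rays.

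Finally, for $h(k_n)\to 0$, the separation $\inf_n|I_n|=d>0$ makes the $k_n$'s a $d$-separated infinite subset of $\mathbb{R}$, so $|k_n|\to\infty$ as $n\to\infty$. Integrating $h'/h$ and using the partial fraction expansion of $1/P_{\mathbb{H}}$ from Proposition~\ref{possibilities}(2) to compare $1/P_{\mathbb{H}}(\zeta)$ with $1/\zeta$ at infinity (in the spirit of the asymptotic argument in Lemma~\ref{image of h1}) gives $|h(z)|\asymp|z|^{-P'_{\mathbb{H}}(\rho_1)}$ as $|z|\to\infty$; since $P'_{\mathbb{H}}(\rho_1)>0$, the conclusion follows. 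The main technical obstacle is the orientation check together with this asymptotic at infinity, which requires care when the sequence $(\lambda_n)_n$ itself accumulates at $\pm\infty$; both are handled by the same partial-fraction machinery used in Proposition~\ref{possibilities} and Lemma~\ref{image of h1}.
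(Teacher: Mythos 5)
Your argument is correct and reaches the same picture as the paper, but the mechanics are genuinely different in the two main steps. For the shape of $h(\mathbb{R})$, the paper conjugates by the M\"obius map $T(z)=\frac{z-\rho_2}{z-\rho_1}$ and computes $\log|g|$ and $\mathrm{Arg}\,g$ for $g=h\circ T^{-1}$ directly from the product formula (\ref{koenigs2}); you instead exploit the identity $h'/h=-P'_{\mathbb{H}}(\rho_1)/P_{\mathbb{H}}$ on $\mathbb{R}$, so that the sign of $P_{\mathbb{H}}$ between consecutive singularities gives the monotonicity of $\log|h|$, hence the ``out to $h(k_{j'})$ and back'' double traversal of each segment, with the jumps $a_n\pi$, $b\pi$, $-\pi$ supplying the directions. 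This is arguably more transparent: it explains why each $k_{j'}$ is a local maximum of $|h|$ (it is the unique sign change of $P_{\mathbb{H}}$ between consecutive standard roots), and the normalization $b+\sum_na_n=1-P'_{\mathbb{H}}(\rho_1)\in(0,1)$ from Proposition \ref{possibilities}(2) enters in exactly the same way in both proofs to confine all slits strictly to one side of the boundary line. For $h(k_n)\to0$, the paper shows $\log|g(x)|\to-\infty$ as $x\to1$ by splitting the series; you integrate $h'/h$ out to infinity and compare $1/P_{\mathbb{H}}$ with $1/\zeta$. These are the same estimate in different coordinates.

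Two cautions. First, the two-sided asymptotic $|h(z)|\asymp|z|^{-P'_{\mathbb{H}}(\rho_1)}$ cannot hold uniformly on $\mathbb{R}$, since $h$ vanishes at every $\lambda_n$ and these accumulate at $\pm\infty$; what you actually need, and what your contour argument yields after the same splitting of $\sum_n a_n\log|\cdot|$ that the paper performs, is only the upper bound forcing $\lim_{x\to\pm\infty}h(x)=0$. Second, your description of the exceptional interval tacitly assumes $\rho_1,\rho_2$ lie in a bounded $I_m$; when they sit in an unbounded interval one boundary ray is the image of $(-\infty,\rho_1)$ (or $(\rho_1,+\infty)$ after replacing $T$ by $-T$), a case the paper also treats only in passing. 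Neither point is a genuine gap.
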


\begin{proof}
     First of all, we observe that the sequences $(\lambda_n)_{n\ge1}$ and $(k_n)_{n\ge1}$ do not have any accumulation points in $\mathbb{R}$. So, there exists some $\epsilon>0$ sufficiently small such that $\lambda_n,k_n\notin[\rho_1-\epsilon,\rho_1+\epsilon]$, for all $n\in\mathbb{N}$. On the contrary, both $(\lambda_n)_{n\ge1}$ and $(k_n)_{n\ge1}$ accumulate at $-\infty$ and/or $+\infty$. Therefore, we are interested in the limit $\lim\limits_{\mathbb{R}\ni x\to\infty}h(x)$. To calculate it, we consider the function $g(x)=h(T^{-1}(x))=h(\frac{x\rho_1-\rho_2}{x-1})$, $x\in\mathbb{R}\setminus\{1\}$. Obviously, $\lim\limits_{x\to1}g(x)=\lim\limits_{x\to\pm\infty}h(x)$. After several simple computations, we find that
    $$g(x)=x^b\frac{(x-1)^{1-b}}{(\rho_1-\rho_2)^{1-b}}\prod\limits_{n=1}^{+\infty}\left(\frac{\rho_1-\lambda_n}{z_0-\lambda_n}\frac{x-T(\lambda_n) }{x-1}\right)^{a_n}.$$
    Observe that the sequence $(T(\lambda_n))_{n\ge1}$ is a bounded sequence of real points and beacuse the series $\sum_{n=1}^{+\infty}a_n\log\abs{\frac{\rho_1-\lambda_n}{z_o-\lambda_n}}$ converges by Lemma \ref{analyticityofh}, taking the absolute value, we write
    \begin{align*}
    \log\abs{g(x)}&=(b-1)\log\abs{\rho_1-\rho_2}+\sum_{n=1}^{+\infty}a_n\log\abs{\frac{\rho_1-\lambda_n}{z_o-\lambda_n}}+b\log\abs{x}\\
    &+(1-b-\sum_{n=1}^{+\infty}a_n)\log\abs{x-1}+\sum_{n=1}^{+\infty}a_n\log\abs{x-T(\lambda_n)}.
    \end{align*}
    
    Now, since $T(\lambda_n)\rightarrow1$, there exists some $N>1$, such that $\abs{T(\lambda_n)-1}<\frac{1}{2}$, for all $n\ge N$. So, for all $x\in(\frac{1}{2},\frac{3}{2})$, we have that $\log\abs{x-T(\lambda_n)}$ is negative for all $n\ge N$. Note that $1-b-\sum_{n=1}^{+\infty}a_n=P_{\mathbb{H}}'(\rho_1)>0$, due to Proposition \ref{possibilities}. Gathering everything together, we obtain
    
     \begin{align*}
    \log\abs{g(x)}&=C+b\log\abs{x}+\sum_{n=1}^{N-1}a_n\log\abs{x-T(\lambda_n)}\\
    &+P_{\mathbb{H}}'(\rho_1)\log\abs{x-1}+\sum_{n=N}^{+\infty}a_n\log\abs{x-T(\lambda_n)}\rightarrow-\infty,
    \end{align*}
    as $x\to1$. As a result, $\lim\limits_{x\to1}g(x)=0$, which leads to $\lim\limits_{x\to+\infty}h(x)=\lim\limits_{x\to-\infty}h(x)=0$. This implies that $\lim_{n\to+\infty}h(k_n)=0$, as well, and we have the desired result. 
    
    On another note, let $M>0$. Then, there exists $\delta>0$ such that $|h(x)|<\delta$, for all $x\in(-\infty,-M]\cup[M,+\infty)$. In addition, due to compactness and the fact that $\rho_1$ is the only singularity of $h$ in $\mathbb{R}$, there also exists some $\delta'$ such that $|h(x)|<\delta'$, for all $x\in[-M,M]\setminus[\rho_1-\epsilon,\rho_1+\epsilon]$. Picking $R=\max\{\delta,\delta'\}$, we have that $|z|<R$, for all $z\in\bigcup_{n=1}^{+\infty}[0,h(k_n)]$.

    Now, regarding the image of $h$, analyzing $g(x)$, we find that 
    $$\mathrm{Arg}(g(x))=C'+b\mathrm{Arg}(x)+P_{\mathbb{H}}'(\rho_1)\mathrm{Arg}(x-1)+\sum_{n=1}^{+\infty}a_n\mathrm{Arg}(x-T(\lambda_n))$$
    for some constant $C'$. As we saw in Remark \ref{welldefined}, we may assume that $C'=0$. Because $T(x)=\frac{x-\rho_2}{x-\rho_1}$ and because there exists no $\lambda_n\in(\rho_1,\rho_2)$, we have that $T(\lambda_n)>0$, for all $n\ge1$ ($T$ maps the line segment $(\rho_1,\rho_2)$ onto the negative line). Therefore, we can see that for $x<0$, 
    $$\mathrm{Arg}(g(x))=b\pi+P'_\mathbb{H}(\rho_1)\pi+\sum_{n=1}^{+\infty}a_n\pi=\pi P'_\mathbb{H}(\rho_1)\left(1-\frac{1}{P'_\mathbb{H}(\rho_2)}-\frac{1}{P'_\mathbb{H}(\lambda_n)}\right)=\pi,$$
    through Proposition \ref{possibilities}. Thus, $g$ maps the negative line onto itself. To see the image of the positive line, we consider the angles $$\theta_1:=P_{\mathbb{H}}'(\rho_1)\pi+\sum_{n:T(\lambda_n)>1}a_n\pi \quad \text{and} \quad\theta_2:=\sum_{n:T(\lambda_n)>1}a_n\pi.$$

    \begin{figure}
        \centering
        \includegraphics[scale=0.4]{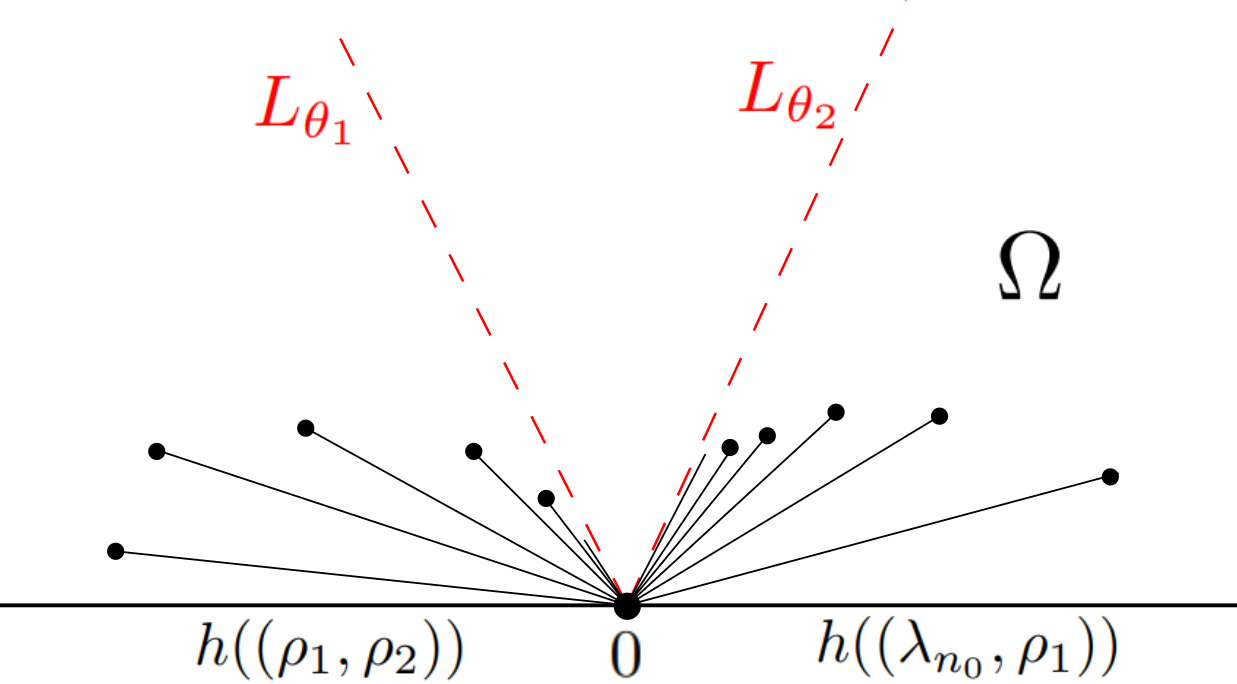}
        \caption{The image of $h$, when $P_{\mathbb{H}}$ has distinct real roots. There exists an angular sector of amplitude $P_{\mathbb{H}}'(\rho_1)\pi$, that contains no slits.}
        \label{fig:sector}
    \end{figure}
    
    We then deduce, similarly as before, that $g$ maps $[0,1]$ onto the union of line segments of the form $[0,h(k_n)]$, for all $n\ge1$, so that $T(\lambda_n)<1$. Assume without loss of generality that $T(\lambda_n)<1$ for infinitely many indices. Otherwise, we only have finitely many such segments, so we comment no further. Those segments, accumulate to the half-line $L_{\theta_1}:=\{xe^{i\theta_1}:x\ge0\}$ and by the first part of the proof, the tip points of the segments accumulate at the origin. Similarly, the image of the line $[1,+\infty]$ is the union of the line segments of the form $[0,h(k_n)]$, for all $n\ge1$, so that $T(\lambda_n)>1$, and the half-line $[0,+\infty]$; recall that $(T(\lambda_n))_{n\ge1}$ is bounded, so picking $n_0\ge1$ such that $T(\lambda_{n_0})=\max T(\lambda_n)$, then $g([T(\lambda_{n_0}),+\infty])=[0,+\infty]$. Again, those segments accumulate to the half-line $L_{\theta_2}:=\{xe^{i\theta_2}:x\ge0\}$ with the tip points accumulating at zero (see Figure \ref{fig:sector}). 
    \end{proof}

The Loewner flow in this case is given by the conjugation formula
\begin{equation}
    f(z,t)=h^{-1}\left(\left(1-t\right)^{-\frac{P'_\mathbb{H}(\rho_1)}{2}}h\left(\left(1-t\right)^{-\frac{1}{2}}z\right)\right)
\end{equation}
for all $z\in\mathbb{H}$ and $0\le t<1$. In order to study the geometry of the slits as $t\rightarrow1^-$, we follow the formula above and thus, we extend the tip points $h(k_n)$ to infinity by applying the mapping $z\mapsto e^{\frac{P'_{\mathbb{H}}(\rho_1)}{2(1-t)}}z$ and then we consider their preimage of $\{xh(k_n):x\ge1\}$ under $h$. The following proposition shows that the trajectory of $f(k_n\sqrt{1-t},t)$ collides at the point $\rho_1\in\mathbb{R}$ as $t\rightarrow1^-$, non-tangentially.

\begin{proposition}\label{non-tangential}
    For each $n\in\mathbb{N}$, the trace $\hat{\gamma}_n:=\{f(k_n\sqrt{1-t},t):[0,1)\}$ is a smooth curve intersecting the real line non-tangentially at the root $\rho_1$.
\end{proposition}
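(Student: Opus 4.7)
The plan is to reduce non-tangentiality to a one-term asymptotic expansion of $h^{-1}$ at the simple pole of $h$ sitting at $\rho_1$.

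From the explicit formula for $f$ in this case I immediately obtain
\[
\hat{\gamma}_n(t) = h^{-1}\bigl(s(t)\,h(k_n)\bigr), \qquad s(t) := (1-t)^{-P'_{\mathbb{H}}(\rho_1)/2},
\]
with $s(t)$ increasing from $1$ to $+\infty$ as $t$ runs over $[0,1)$ because $P'_{\mathbb{H}}(\rho_1)>0$. For $s>1$ the ray $\{s\,h(k_n):s\ge 1\}$ stays inside $h(\mathbb{H})$: by Lemma \ref{image of h2} the union of slits is a bounded subset of $\mathbb{H}$ while the ray escapes to infinity, and it only grazes the tip of its own slit at $s=1$. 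Since $h^{-1}$ is conformal on that ray, $\hat{\gamma}_n$ is automatically a smooth arc in $\mathbb{H}$.

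Formula $(\ref{koenigs2})$ shows that $h$ has a simple pole at $\rho_1$, so I write $h(z)=A/(z-\rho_1)+O(1)$ with a nonzero constant $A$. The map $G(\zeta):=h^{-1}(1/\zeta)$ then extends analytically across $\zeta=0$ with $G(0)=\rho_1$ and
\[
G(\zeta) = \rho_1 + A\zeta + O(\zeta^2).
\]
The key claim is that $A$ is a strictly negative real number. Since the slits of $h(\mathbb{H})$ are confined to a bounded set, for $\zeta$ close enough to $0$ we have $1/\zeta \in h(\mathbb{H})$ if and only if $\zeta \in -\mathbb{H}$. Thus $G$ restricts to a conformal bijection between a one-sided neighbourhood of $0$ in $-\mathbb{H}$ and a one-sided neighbourhood of $\rho_1$ in $\mathbb{H}$, carrying the boundary arc on $\mathbb{R}$ to a boundary arc on $\mathbb{R}$. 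By Schwarz reflection $G$ extends to a holomorphic function on a full neighbourhood of $0$ with real Taylor coefficients, and the interchange of half-planes then forces $A = G'(0) < 0$.

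Plugging the expansion of $G$ into the parameterization of $\hat{\gamma}_n$ gives
\[
\hat{\gamma}_n(t) - \rho_1 \;=\; \frac{A}{s(t)\,h(k_n)} + O\bigl(s(t)^{-2}\bigr), \qquad t\to 1^-.
\]
Because $A<0$ while $h(k_n)$ is the tip of a slit sitting in the open upper half-plane, $\arg h(k_n)\in(0,\pi)$ strictly, so the limiting angle of approach
\[
\lim_{t\to 1^-}\arg\bigl(\hat{\gamma}_n(t)-\rho_1\bigr) \;=\; \pi-\arg h(k_n)
\]
lies in $(0,\pi)$ strictly and depends genuinely on $n$; this is precisely the non-tangential intersection at $\rho_1$. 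The only delicate step is the reality and negativity of the residue $A$, secured by the local Schwarz-reflection argument above.
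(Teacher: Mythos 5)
Your proof is correct, and it takes a genuinely different route from ours. Our proof works entirely with harmonic measure: we squeeze $\omega(h(\gamma_n(t)),\partial\Omega^{\pm},\Omega)$ between the corresponding half-plane harmonic measures (using that the slits are bounded while $h(\gamma_n(t))\to\infty$), deduce that the limit equals $\frac{\pi-\arg h(k_n)}{\pi}$, and convert this into an angle of approach via the level-set description of Remark \ref{cara}. You instead exploit the fact that $\infty$ is a free analytic boundary point of $\Omega=h(\mathbb{H})$: since the slits are confined to a bounded set (second paragraph of the proof of Lemma \ref{image of h2}), $G(\zeta)=h^{-1}(1/\zeta)$ maps a lower half-disk at $0$ into $\mathbb{H}$ with real boundary values, Schwarz reflection applies, and the simple pole of $h$ at $\rho_1$ visible in $(\ref{koenigs2})$ forces $G'(0)=A<0$; the angle $\pi-\arg h(k_n)$ then drops out of the first-order expansion, in agreement with our computation. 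Your method is more elementary and buys more: it gives the quantitative asymptotics $\hat{\gamma}_n(t)-\rho_1\sim A\,(1-t)^{P_{\mathbb{H}}'(\rho_1)/2}/h(k_n)$, hence a rate of convergence and smoothness up to the endpoint, not merely the angle. Its limitation is that it hinges on reflection across a free boundary arc at the relevant prime end, which is precisely what is unavailable in the tangential and orthogonal cases (there the curve escapes to $\infty$ between parallel half-lines), whereas the harmonic-measure argument transfers uniformly to those geometries. Two points you leave implicit, as do we: the ray $\{s\,h(k_n):s>1\}$ avoids every slit because distinct slits have distinct arguments (this follows from the step structure of $\mathrm{Arg}(g(x))$ in the proof of Lemma \ref{image of h2}), and $A\neq 0$ because the pole in $(\ref{koenigs2})$ is simple, equivalently because the reflected $G$ is univalent near $0$.
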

\begin{proof}
    Fix $n\in\mathbb{N}$ and assume without loss of generality that $k_n>\rho_1$. Consider the curve $\gamma_n:[0,1)\to\mathbb{H}$ with $\gamma_n(t)=f(k_n\sqrt{1-t},t)$. Surely $\lim\limits_{t\to1}\gamma_n(t)=\rho_1$. We only need to deal with the angle of the convergence and we are going to do this through the use of harmonic measure. By (\ref{koenigs2}), $h$ maps the upper half-plane $\mathbb{H}$ conformally onto the simply connected domain $\Omega:=\mathbb{H}\setminus\bigcup\limits_{j=1}^{+\infty}[0,h(k_j)]$. In addition, the image of $\gamma_n$ through $h$ is the ray $\{re^{i\arg h(k_n)}:r>|h(k_n)|\}$ and $\lim\limits_{t\to+\infty}h(\gamma_n(t))=\infty$. It is easy to see that $h\circ\gamma_n$ separates the prime ends of $\Omega$ into two connected components. The one consists of the prime ends corresponding to $(-\infty,0]$, to $\bigcup\limits_{j\in\mathbb{N}}\{[0,h(k_j)]:\arg h(k_j)>\arg h(k_n)\}$ and the prime ends corresponding to $[0,h(k_n)]$ defined by crosscuts with arguments larger than $\arg h(k_n)$. The other is the complement. We denote them by $\partial\Omega^+$ and $\partial\Omega^-$, respectively (see Figure \ref{fig:non-tang}). 
    
    Our first objective is to prove that $\lim\limits_{t\to1}\omega(h(\gamma_n(t)),\bigcup\limits_{j=1}^{+\infty}[0,h(k_j)],\Omega)=0$. Indeed, by the previous lemma, there exists some $R>0$ such that $|z|<R$, for all $z\in\bigcup\limits_{j=1}^{+\infty}[0,h(k_j)]$. Therefore, through a new conformal mapping $g$ of $\Omega$ onto $\mathbb{H}$ which fixes $\infty$ and sends $h(k_n)$ to $0$, we may map a subset of $\bigcup\limits_{j=1}^{+\infty}[0,h(k_j)]$ onto the closed segment $[a,b]\subset\mathbb{R}$. Then, by conformal invariance,
    $$\lim\limits_{t\to1}\omega(h(\gamma_n(t)),\bigcup\limits_{j=1}^{+\infty}[0,h(k_j)],\Omega)\le\lim\limits_{t\to1}\omega(g(h(\gamma_n(t))),[a,b],\mathbb{H})=0,$$
    in view of Subsection 2.1. Combining this with the domain monotonicity property, we have
    $$\lim\limits_{t\to1}\omega(h(\gamma_n(t)),\partial\Omega^+,\Omega)=\lim\limits_{t\to1}\omega(h(\gamma_n(t)),(-\infty,0],\Omega)\le\lim\limits_{t\to1}\omega(h(\gamma_n(t)),(-\infty,0],\mathbb{H}).$$
    In a similar fashion, 
    $$\lim\limits_{t\to1}\omega(h(\gamma_n(t)),\partial\Omega^-,\Omega)\le\lim\limits_{t\to1}\omega(h(\gamma_n(t)),[0,+\infty),\mathbb{H}).$$

    \begin{figure}
        \centering
        \includegraphics[scale=0.65]{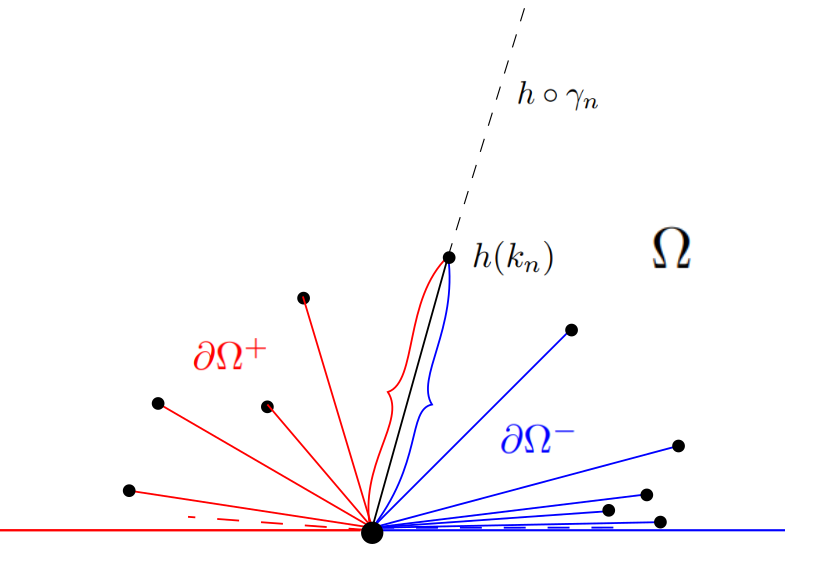}
        \caption{The domain $\Omega$ and its prime ends}
        \label{fig:non-tang}
    \end{figure}

    However, in the last two relations, both the left-hand and right-hand sides add up to 1. This implies that the equalities prevail. Consequently,
    \begin{eqnarray*}
    \lim\limits_{t\to1}\omega(h(\gamma_n(t)),\partial\Omega^-,\Omega)&=&\lim\limits_{t\to1}\omega(h(\gamma_n(t)),[0,+\infty),\mathbb{H})\\
    &=&\lim\limits_{r\to+\infty}\omega(re^{i\arg h(k_n)},\{w\in\mathbb{C}:\arg w=0\},U_{0,\pi})\\
    &=&\lim\limits_{r\to+\infty}\frac{\pi-\arg h(k_n)}{\pi}.
    \end{eqnarray*}
    Using a Riemann mapping of $\Omega$ onto the unit disk, the conformal invariance of the harmonic measure and the preservation of the orientation, Remark \ref{cara} reveals that the image of $h\circ\gamma_n$ inside the unit disk converges to some point of the unit circle by angle $\pi(1-\frac{\pi-\arg h(k_n)}{\pi})=\arg h(k_n)$. Finally, through a M\"{o}bius transformation which preserves angles in the whole complex plane, we return to our initial setting, to see that $\gamma_n$ intersects the real line at $\rho_1$ by angle $\pi-\arg h(k_n)\in(0,\pi)$, thus non-tangentially. Remember that $h(k_n)\in\mathbb{H}$ or equivalently $0<\arg h(k_n)<\pi$. The difference between the angles in the unit disk and the upper half-plane derives from the fact that in the unit disk we use to count the angle with the help of the tangent, while in the upper half-plane the angle is usually counted by starting from the ``positive'' semi-axis.
\end{proof}

\subsection{Tangential intersections.}
Assume, now, that $P_{\mathbb{H}}$ has a double root $\rho_0\in\mathbb{R}$. Fix some $z_0\in\mathbb{H}$ and consider the function $h(z):=\int_{z_0}^{z}\frac{d\zeta}{P_{\mathbb{H}}(\zeta)}$, which is well defined and analytic, since $\mathbb{H}$ is simply connected. Therefore, by $(\ref{double root})$ and due to the uniform convergence on compacta, we have that 
\begin{equation}
    \label{koenigs3}
    h(z)=\sum_{n=1}^{+\infty}A_n\log\dfrac{z-\lambda_{n}}{z_0-\lambda_n}+\left(1-\sum_{n=1}^{+\infty}A_n\right)\log(z-\rho_0)+\dfrac{B}{z-\rho_0}+\text{const.}
\end{equation}
for all $z\in\mathbb{H}$ where the parameters $A_n<0$ and $B>0$ are given by $(\ref{double root})$ and the implicit solution $h(v(z,t))=-\frac{1}{2}\log(1-t)+h(z)$.
 The following proposition shows that $h$ is a well defined analytic and univalent function of the upper half-plane.
\begin{proposition}
\label{univalenceofh3}
    Let $(A_n)_{n\ge1}$ be a summable sequence of negative numbers, $(\lambda_{n})_{n\ge1}$ be a sequence of real points and let also $\rho_0\in\mathbb{R}$, $B\in\mathbb{R}$ and $C>0$. Fix some $z_0\in\mathbb{H}$. Then, the function
    $$\phi(z)=\sum_{n=1}^{+\infty}A_n\log\dfrac{z-\lambda_{n}}{z_0-\lambda_n}+(1-\sum_{n=1}^{+\infty}A_n)\log(z-\rho_0)+\dfrac{B}{z-\rho_0}+\dfrac{C}{(z-\rho_0)^2}$$
    is analytic and univalent in $\mathbb{H}$. 
\end{proposition}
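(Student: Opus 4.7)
The plan is as follows: establish analyticity by direct appeal to Lemma \ref{analyticityofh}, and prove univalence by showing that $(z-\rho_0)^2\phi'(z)$ has strictly positive imaginary part on $\mathbb{H}$ and then pulling this bound back through the M\"obius automorphism $\psi(z)=-1/(z-\rho_0)$ of $\mathbb{H}$. The derivative of $\psi$ is precisely $-(z-\rho_0)^{-2}$, so after composition the bound becomes a positive-imaginary-part statement for $(\phi\circ\psi^{-1})'$ on the convex domain $\mathbb{H}$, from which injectivity follows via a line-segment integral.

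\textbf{Analyticity.} The series $\sum_{n\ge1}A_n\log\frac{z-\lambda_n}{z_0-\lambda_n}$ is analytic on $\mathbb{H}$ by Lemma \ref{analyticityofh}. The three remaining summands have their only singularity at the real point $\rho_0$ and are therefore analytic on $\mathbb{H}$.

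\textbf{Key inequality.} Differentiating term by term gives
\begin{equation*}
\phi'(z)=\sum_{n\ge1}\frac{A_n}{z-\lambda_n}+\frac{1-\sum_n A_n}{z-\rho_0}-\frac{B}{(z-\rho_0)^2}-\frac{2C}{(z-\rho_0)^3}.
\end{equation*}
After multiplying by $(z-\rho_0)^2$ and expanding $\frac{(z-\rho_0)^2}{z-\lambda_n}=(z-\rho_0)+(z-\rho_0)\frac{\lambda_n-\rho_0}{z-\lambda_n}$ termwise, the coefficients of $(z-\rho_0)$ collapse via $\sum A_n+(1-\sum A_n)=1$; the remaining tail $\sum A_n(\lambda_n-\rho_0)/(z-\lambda_n)$ converges absolutely because $|(\lambda_n-\rho_0)/(z-\lambda_n)|$ is uniformly bounded in $n$ while $\sum|A_n|<\infty$. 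A short imaginary-part computation, using $A_n<0$ and $C>0$, then gives
\begin{equation*}
\mathrm{Im}\bigl((z-\rho_0)^2\phi'(z)\bigr)=\mathrm{Im}(z)\left[1+\sum_{n\ge1}\frac{|A_n|(\lambda_n-\rho_0)^2}{|z-\lambda_n|^2}+\frac{2C}{|z-\rho_0|^2}\right]>0
\end{equation*}
for all $z\in\mathbb{H}$.

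\textbf{Univalence.} Set $\tilde\phi:=\phi\circ\psi^{-1}$, where $\psi^{-1}(w)=\rho_0-1/w$. By the chain rule, $\tilde\phi'(w)=(z-\rho_0)^2\phi'(z)|_{z=\psi^{-1}(w)}$, so $\mathrm{Im}\,\tilde\phi'(w)>0$ on the convex set $\mathbb{H}$. For distinct $w_1,w_2\in\mathbb{H}$, the segment from $w_1$ to $w_2$ stays inside $\mathbb{H}$, and the representation
\begin{equation*}
\tilde\phi(w_2)-\tilde\phi(w_1)=(w_2-w_1)\int_0^1\tilde\phi'\bigl((1-s)w_1+sw_2\bigr)\,ds
\end{equation*}
shows $\tilde\phi(w_2)\ne\tilde\phi(w_1)$, because the integral is nonzero (it has strictly positive imaginary part). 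Hence $\tilde\phi$, and therefore $\phi=\tilde\phi\circ\psi$, is univalent. The only delicate point in the plan is the termwise manipulation above, which relies on the uniform boundedness of $(\lambda_n-\rho_0)/(z-\lambda_n)$ rather than on any independent summability assumption of the form $\sum|A_n\lambda_n|<\infty$.
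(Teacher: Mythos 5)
Your proof is correct and takes essentially the same route as the paper: the paper also precomposes with the M\"obius automorphism $T(z)=\frac{1}{\rho_0-z}$ (your $\psi$), verifies that the transplanted derivative $\sum_n\frac{A_n}{w-T(\lambda_n)}-\frac{1}{w}-B+2Cw$ has positive imaginary part on $\mathbb{H}$, and concludes univalence from the convexity of the half-plane; your version merely performs the equivalent imaginary-part computation in the $z$-variable via $\tilde\phi'(w)=(z-\rho_0)^2\phi'(z)$. The only blemish is the harmless sign slip in your preamble, where you write $\psi'(z)=-(z-\rho_0)^{-2}$ although $\psi'(z)=+(z-\rho_0)^{-2}$; the identity you actually use afterwards is the correct one.
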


\begin{proof}
Analyticity follows directly from Lemma \ref{analyticityofh}. To prove that $h$ is univalent, we precompose with the Möbius transform $T(z)=\frac{1}{\rho_0-z}$. Note that $T:\mathbb{H}\rightarrow\mathbb{H}$. By differentiating, we get that
    $$(\phi\circ T^{-1} )'(z)=\sum_{n=1}^{+\infty}\dfrac{A_n}{z-T(\lambda_n)}-\dfrac{1}{z}-B+2Cz$$
    which has positive imaginary part for all $z\in\mathbb{H}$. Hence, $\phi\circ T^{-1}$ and by extension $\phi$ are univalent in $\mathbb{H}$, since the upper half-plane is a convex domain.
    \end{proof}

\begin{lemma} \label{boundnessofh3}
Assume that $h$ is given by $(\ref{koenigs3})$. Then, it maps the upper half-plane onto a horizontal half-plane, minus infinitely many horizontal half-lines that extend to infinity from the left. In addition, $\mathrm{Re}h(x)\rightarrow+\infty$, as $x\rightarrow\pm\infty$.
\end{lemma}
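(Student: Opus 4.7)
The plan is to identify $h(\mathbb{H})$ through a boundary analysis along $\mathbb{R}$ and combine this with the univalence from Proposition \ref{univalenceofh3}. By Lemma \ref{analyticityofh}, $h$ extends continuously to $\overline{\mathbb{H}}\setminus(\{\rho_0\}\cup\{\lambda_n\}_{n\ge 1})$, so the image of each connected component of $\mathbb{R}\setminus(\{\rho_0\}\cup\{\lambda_n\})$ lies on $\partial h(\mathbb{H})$ and, together with the orientation induced by $h$, will determine $h(\mathbb{H})$.

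First I would compute $\mathrm{Im}\,h(x)$ on each such component. Using the principal branch of the logarithm (forced by analyticity in $\mathbb{H}$), $\mathrm{Im}\log(x-a)$ equals $\pi$ for $x<a$ and $0$ for $x>a$, while $B/(x-\rho_0)$ is real on $\mathbb{R}$. Hence $\mathrm{Im}\,h$ is piecewise constant on $\mathbb{R}\setminus(\{\rho_0\}\cup\{\lambda_n\})$; as $x$ increases, its value drops by $\pi(1-\sum_n A_n)>0$ at $\rho_0$ and rises by $-\pi A_n>0$ at each $\lambda_n$. Thus the images of the successive boundary intervals lie on countably many distinct horizontal lines.

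Next I would analyze $\mathrm{Re}\,h(x)$ at the singularities and at infinity. At each $\lambda_n$ the term $A_n\log|x-\lambda_n|$ with $A_n<0$ forces $\mathrm{Re}\,h(x)\to+\infty$ from either side. At $\rho_0$ the pole $B/(x-\rho_0)$ with $B>0$ dominates, giving $\mathrm{Re}\,h(x)\to+\infty$ as $x\to\rho_0^+$ and $\mathrm{Re}\,h(x)\to-\infty$ as $x\to\rho_0^-$. For $x\to\pm\infty$ the expansion $h'(x)=1/P_\mathbb{H}(x)=1/x+O(1/x^2)$ integrates to $h(x)=\log|x|+O(1)$, so $\mathrm{Re}\,h(x)\to+\infty$; this already settles the second assertion of the lemma. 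To make the asymptotic precise along the real axis (where the $\lambda_n$ may accumulate) one passes to subintervals $(\lambda_k,\lambda_{k+1})$ and uses that on each such interval $\mathrm{Re}\,h$ has a single minimum at a critical point $k_j$, whose value satisfies $\mathrm{Re}\,h(k_j)\sim\log k_j$.

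Finally, to pin down the image I use Proposition \ref{possible roots0}(3a), which places $\rho_0$ in an unbounded interval: after relabeling, $\rho_0<\lambda_n$ for every $n$. On $(-\infty,\rho_0)$ the derivative $h'=1/P_\mathbb{H}$ has no zeros (every $k_j$ lies to the right of $\rho_0$), so $h$ is strictly monotone; since $\mathrm{Re}\,h$ sweeps from $+\infty$ down to $-\infty$ while $\mathrm{Im}\,h$ is fixed at a constant $y_0$, the image of this interval is the \emph{entire} horizontal line $\{\mathrm{Im}\,w=y_0\}$. On every other component of $\mathbb{R}\setminus(\{\rho_0\}\cup\{\lambda_n\})$, $\mathrm{Re}\,h$ tends to $+\infty$ at both endpoints and $h'$ vanishes at exactly one intermediate $k_j$, so $h$ traces the horizontal half-line $[\mathrm{Re}\,h(k_j),+\infty)\times\{y_j\}$ back and forth. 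By univalence and orientation preservation (traversing $\mathbb{R}$ from left to right keeps $\mathbb{H}$ on the left, so its image lies below the topmost boundary line), $h(\mathbb{H})$ must be the open half-plane $\{\mathrm{Im}\,w<y_0\}$ with the countable family of horizontal half-lines $[\mathrm{Re}\,h(k_j),+\infty)\times\{y_j\}$ removed. The main obstacle I anticipate is the global bookkeeping for the infinite family of slits: one has to verify that the heights $y_j$ accumulate correctly (monotonically toward some limiting value strictly below $y_0$) and that no additional boundary component hides at infinity. Both facts rely on the summability of $(A_n)$ and on the explicit jump formulas derived above, but need some care because $(\lambda_n)$ itself may accumulate at $+\infty$.
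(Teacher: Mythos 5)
Your overall strategy (piecewise-constant $\mathrm{Im}\,h$ on $\mathbb{R}$, behaviour of $\mathrm{Re}\,h$ at the singularities, then univalence plus orientation to read off the image) is the same as the paper's, and your jump computations at $\rho_0$ and at the $\lambda_n$ are correct. However, the step in which you actually identify the half-plane contains a genuine error. Proposition \ref{possible roots0}(3a) does \emph{not} place $\rho_0$ in an unbounded interval: it explicitly allows $\rho_0$ to lie in a bounded interval, and that is the generic situation — indeed, if $(k_n)_{n\ge1}$ accumulates at both $+\infty$ and $-\infty$ (the case the paper treats without loss of generality), no unbounded interval exists at all, and no relabeling can arrange $\rho_0<\lambda_n$ for every $n$. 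In general $\rho_0$ shares a bounded interval $I_{n_0}=(k_{n_0},k_{n_0'})$ with the simple standard root $\lambda_{n_0}$, so the boundary component on which $\mathrm{Re}\,h$ sweeps from $+\infty$ down to $-\infty$ is the \emph{bounded} interval between $\lambda_{n_0}$ and $\rho_0$ (it contains no $k_j$, hence no zero of $h'=1/P_{\mathbb{H}}$, so $h$ is monotone there and its image is the full horizontal line bounding the half-plane). Your conclusion survives once this is corrected, but as written the final paragraph of your argument — including the claim that ``every $k_j$ lies to the right of $\rho_0$'' — rests on a false premise. Note also that the sign of $B$ in (\ref{koenigs3}) and the side of $\rho_0$ on which $\mathrm{Re}\,h\to-\infty$ depend on whether $\lambda_{n_0}<\rho_0$ or $\rho_0<\lambda_{n_0}$, which is why the paper fixes one configuration explicitly.

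The second assertion of the lemma is also not actually proved in your sketch. The expansion $h'(x)=1/x+O(1/x^2)$ is false along the real axis, precisely because the poles $\lambda_n$ of $1/P_{\mathbb{H}}$ accumulate at $\pm\infty$; and your fallback statement $\mathrm{Re}\,h(k_j)\sim\log k_j$ is exactly the claim that needs proof, so invoking it is circular. The difficulty is to bound the tail of $\sum_n A_n\log|x-\lambda_n|$ uniformly as $x\to\pm\infty$ while infinitely many $\lambda_n$ lie near $x$. The paper resolves this by substituting $x\mapsto\rho_0-1/x$ (sending $\pm\infty$ to $0$), rewriting the series in terms of the bounded sequence $T(\lambda_n)\to0$, and splitting it into a finite part plus a tail controlled by the summability of $(A_n)_{n\ge1}$; some such argument must be supplied for your proof to be complete.
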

\begin{proof}
We argue as in Lemma \ref{image of h2}. Again, the only accumulation points of the sequences  $(\lambda_n)_{n\ge1}$ and $(k_n)_{n\ge1}$ are $\pm\infty$.We, now, consider the function $g(x)=h(T^{-1}(x))=h(\frac{\rho_0x-1}{x})$, $x\in\mathbb{R}\setminus\{0\}$. It is easy to see that $\lim\limits_{x\to0}g(x)=\lim\limits_{x\to\pm\infty}h(x)$. After a series of calculations, we see that
    $$g(x)=\sum\limits_{n=1}^{+\infty}A_n\log\left(\frac{\rho_0-\lambda_n}{z_0-\lambda_n}\frac{x-T(\lambda_n)}{x}\right)+\left(1-\sum\limits_{n=1}^{+\infty}A_n\right)\log\left(-\frac{1}{x}\right)-Bx+\text{const.},$$
    where $B>0$ and $A_n<0$, for every $n\in\mathbb{N}$. Focusing on the real parts, we are led to
    \begin{align*}
    \text{Re}g(x)&=\sum\limits_{n=1}^{+\infty}A_n\log\left|\frac{\rho_0-\lambda_n}{z_0-\lambda_n}\frac{x-T(\lambda_n)}{x}\right|-\left(1-\sum\limits_{n=1}^{+\infty}A_n\right)\log|x|-Bx+\text{const.}\\
    &=\sum\limits_{n=1}^{+\infty}A_n\log\left|\frac{\rho_0-\lambda_n}{z_0-\lambda_n}(x-T(\lambda_n))\right|-\log|x|-Bx+\text{const.}
    \end{align*}

    \begin{figure}
        \centering
        \includegraphics[scale=0.45]{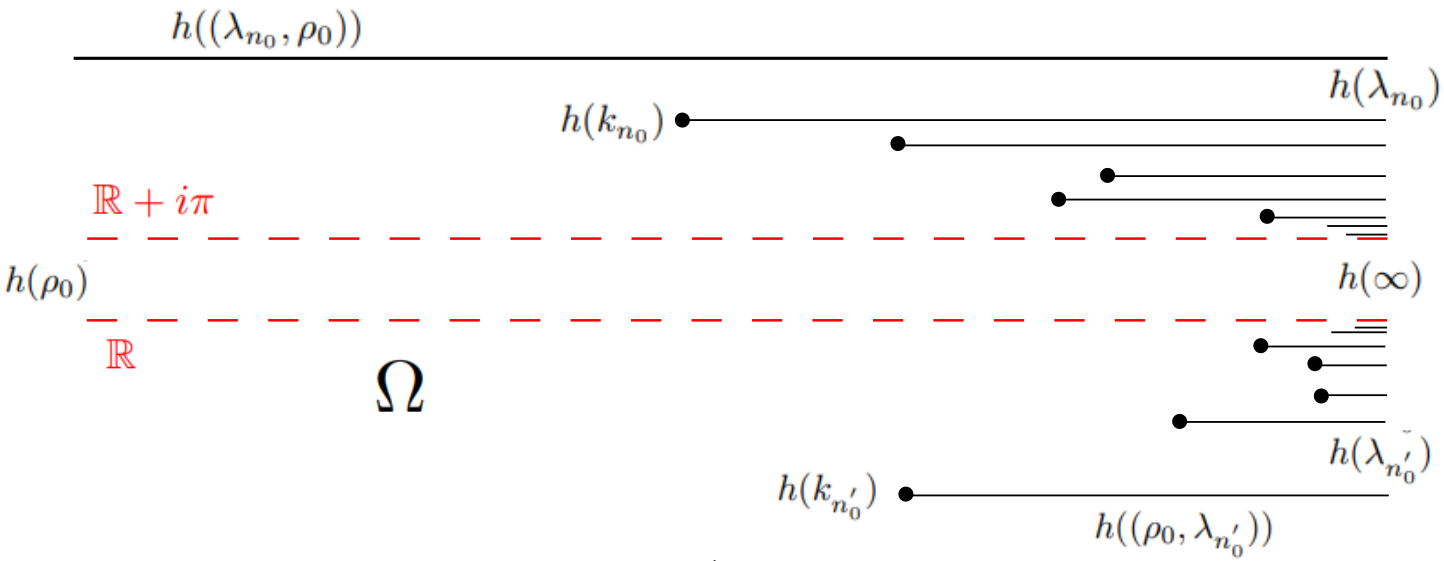}
        \caption{The image of $h$, when $k_{n_0}<\lambda_{n_0}<\rho_0<k_{n_0'}$, for some $n_0\ge1$.}
        \label{fig:positivestep}
    \end{figure}
    
    We focus on the infinite sum. Since the only possible limit points of $(\lambda_n)_{n\ge1}$ are $\pm\infty$ and since $(T(\lambda_n))_{n\ge1}$ is bounded and converges to $0$ as $n\to+\infty$, we have
    $$\sum\limits_{n=1}^{+\infty}A_n\log\left|\frac{\rho_0-\lambda_n}{z_0-\lambda_n}(x-T(\lambda_n))\right|\approx\sum\limits_{n=1}^{N}A_n\log\left|\frac{\rho_0-\lambda_n}{z_0-\lambda_n}(x-T(\lambda_n))\right|+\sum\limits_{n=N+1}^{+\infty}A_n\log|x|,$$
    for some large enough $N\in\mathbb{N}$, where $\approx$ denotes comparability for large $n$. As a result,
    $$\mathrm{Re}g(x)\approx\sum\limits_{n=1}^{N}A_n\log\left|\frac{\rho_0-\lambda_n}{z_0-\lambda_n}(x-T(\lambda_n))\right|-\left(1-\sum\limits_{n=N+1}^{+\infty}A_n\right)\log|x|-Bx+\text{const.}$$
    Keeping in mind that the sequence $(A_n)_{n\ge1}$ is summable with negative terms, we quickly see that $\lim\limits_{x\to0}\mathrm{Re}g(x)=+\infty$. Also, taking into account that $\mathrm{Re}h(\lambda_n)=+\infty$, we deduce that $\lim\limits_{x\to\pm\infty}\mathrm{Re}h(x)=+\infty$.

    Finally, to see the image of $h$, we find $h(\mathbb{R})$. Without loss of generality, we assume that $(k_n)_{n\ge1}$ accumulates at both $\pm\infty$ (and hence so does the sequence $(\lambda_n)_{n\ge1}$) and let also $k_{n_0}<\lambda_{n_0}<\rho_0<k_{n_o'}$. Now, it is direct to see that
    \begin{equation}\label{image of h3}
        \mathrm{Im}h(x)=\sum_{n=1}^{+\infty}A_n\mathrm{Arg}(x-\lambda_n)+(1-\sum_{n=1}^{+\infty}A_n)\mathrm{Arg}(x-\rho_0)+\text{const}.
    \end{equation}
    Again, for simplicity we take the constant to be zero. For any $m\ge1$, so that $\lambda_m<\lambda_{n_0}$, it is $\mathrm{Im}h(x)=(1-\sum_{\lambda_n\le\lambda_m}A_n)\pi$, for all $x\in(\lambda_m,\lambda_{m'})$. Similarly for $\lambda_{m}>\rho_0$, we get that for all $x\in(\lambda_m,\lambda_{m'})$, $\mathrm{Im}h(x)=\sum_{\lambda_n>\lambda_m}A_n\pi$. To conclude, we have that $\mathrm{Im}h(x)=(1-\sum_{\lambda_n\le\rho_0}A_n)\pi$, in $(\lambda_{n_0},\rho_0)$ and because $\mathrm{Re}h(x)\rightarrow-\infty$, as $x\rightarrow\rho_0-$, we deduce that $h$ maps the interval $(\lambda_{n_0},\rho_0)$ onto the line $\mathbb{R}+i\sum_{\lambda_n\le\rho_0}A_n\pi$, as in Figure \ref{fig:positivestep}.
\end{proof}
By the proof of the lemma above, we see that as $\lambda_m\rightarrow -\infty$, then $\mathrm{Im}h(k_m)=(1-\sum_{\lambda_n\le\lambda_m}A_n)\pi\rightarrow\pi$ and as $\lambda_m\rightarrow+\infty$, then $\mathrm{Im}h(k_m)=\sum_{\lambda_n>\lambda_m}A_n\pi\rightarrow0$. Also, $\mathrm{Re}h(k_m)\rightarrow+\infty$ and hence, we see that there is a horizontal strip of amplitude $\pi$ that contains no slits, whereas the tip points of the half-lines accumulate at the boundary of this strip from above and below, while also ``disappearing'' to the right as we see in Figure \ref{fig:positivestep}.

Following similar steps as before, the Loewner flow in this case is given by the conjugation formula
$$f(z,t)=h^{-1}\left(\frac{1}{2}\log(1-t)+h\left((1-t)^{-\frac{1}{2}}z\right)\right).$$
Once again we enquire about the convergence of the corresponding trajectories.
\begin{proposition}\label{tangential}
    For each $n\in\mathbb{N}$, the trace $\hat{\gamma}_n:=\{f(k_n\sqrt{1-t},t):t\in[0,1)\}$ is a smooth curve intersecting the real line tangentially at the root $\rho_0$. In particular, either all curves converge to $\rho_0$ by angle $0$ or all of them converge by angle $\pi$.
\end{proposition}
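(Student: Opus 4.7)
The plan is to exploit the explicit form of the Loewner flow to reduce everything to the behavior of a horizontal ray in the image domain $\Omega=h(\mathbb{H})$. Plugging $z=k_n\sqrt{1-t}$ into the conjugation formula gives
\begin{equation*}
h(\hat{\gamma}_n(t))=\tfrac{1}{2}\log(1-t)+h(k_n),
\end{equation*}
so that the image of $\hat{\gamma}_n$ is simply the horizontal ray based at $h(k_n)$ that marches leftward to $-\infty$ as $t\to1^-$. Since $h$ is univalent by Proposition \ref{univalenceofh3} and this ray is a smooth curve, smoothness of $\hat{\gamma}_n$ in $\mathbb{H}$ is immediate from the formula above.

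Next, I would locate the limit of $\hat{\gamma}_n(t)$. Reading off (\ref{koenigs3}), the log terms at each boundary singularity $\lambda_j$ force $\mathrm{Re}\, h\to+\infty$ (since $A_j<0$), so $\mathrm{Re}\, h(\hat{\gamma}_n(t))\to-\infty$ can only be attained if $\hat{\gamma}_n(t)$ approaches the pole $\rho_0$, where the dominant term $B/(z-\rho_0)$ allows real parts of either sign depending on the direction of approach. Univalence of $h$ on $\mathbb{H}$ then pins down the convergence $\hat{\gamma}_n(t)\to\rho_0$ as $t\to1^-$.

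For the angle of intersection, near $\rho_0$ the simple pole $B/(z-\rho_0)$ dominates the subordinate log and constant terms of $h$, yielding
\begin{equation*}
\hat{\gamma}_n(t)-\rho_0 \sim \frac{B}{\tfrac{1}{2}\log(1-t)+h(k_n)} \quad\text{as } t\to1^-.
\end{equation*}
Taking arguments, the denominator has argument tending to $\pi$ (its real part diverges to $-\infty$, while its imaginary part stays fixed), so $\arg(\hat{\gamma}_n(t)-\rho_0)\to\arg B-\pi\pmod{2\pi}$. Since $B=2/P_{\mathbb{H}}^{(2)}(\rho_0)$ is a nonzero real constant, this limit equals either $0$ or $\pi$ depending only on the sign of $B$, and therefore every $\hat{\gamma}_n$ meets $\mathbb{R}$ tangentially by the same angle, independent of $n$. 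The main technical point I anticipate is making the asymptotic rigorous with an honest error estimate, which reduces to the bound $|h(z)-B/(z-\rho_0)|=O(\log|z-\rho_0|)$ near $\rho_0$, dwarfed by the first-order pole. I do not expect to need harmonic measure here, since the explicit expression for $h\circ\hat{\gamma}_n$ already encodes the geometric information directly.
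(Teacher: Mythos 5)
Your argument is correct, but it follows a genuinely different route from the paper. The paper's proof is a harmonic-measure argument: it splits the prime ends of $\Omega=h(\mathbb{H})$ into $\partial\Omega^+$ and $\partial\Omega^-$, traps the leftward horizontal ray $h\circ\hat{\gamma}_n$ inside an auxiliary angular domain $U$ attached to the boundary line of the half-plane, shows $\lim_{t\to1}\omega(h(\hat{\gamma}_n(t)),\partial\Omega^+,\Omega)=1$, and then reads off the angle $0$ (or $\pi$) from Remark \ref{cara}; this is deliberately parallel to the treatment of the non-tangential and orthogonal cases. You instead invert the local expansion of $h$ at its pole: since $\psi$ of Lemma \ref{analyticityofh} is continuous at $\rho_0$ (note $\rho_0\notin\overline{(\lambda_n)_{n\ge1}}$ in case (3a)), one has $(z-\rho_0)h(z)\to B$ as $\overline{\mathbb{H}}\ni z\to\rho_0$, whence $\hat{\gamma}_n(t)-\rho_0=(B+o(1))/\bigl(\tfrac12\log(1-t)+h(k_n)\bigr)$ and the argument converges to $\arg B-\pi\pmod{2\pi}$, i.e.\ to $0$ or $\pi$ according to the sign of $B$, uniformly in $n$. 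Your approach buys more: an explicit rate $|\hat{\gamma}_n(t)-\rho_0|\sim 2|B|/|\log(1-t)|$ and an identification of which tangential angle occurs in terms of the sign of $B$ (equivalently of $P_{\mathbb{H}}^{(2)}(\rho_0)$), which the paper leaves as a dichotomy depending on the orientation of the half-plane $\Omega$. Two small points to tighten: (i) the step ``$\hat{\gamma}_n(t)\to\rho_0$'' should be justified before the asymptotic is invoked --- it follows because $\mathrm{Re}\,h\to+\infty$ at every $\lambda_j$ and at $\pm\infty$ (Lemma \ref{boundnessofh3}), so $\rho_0$ is the only boundary point of $\mathbb{H}$ at which $\mathrm{Re}\,h$ can tend to $-\infty$, and $h^{-1}(w(t))$ must leave every compact subset of $\mathbb{H}$; and (ii) integrating the term $\frac{2/P_{\mathbb{H}}^{(2)}(\rho_0)}{(z-\rho_0)^2}$ of (\ref{double root}) gives $B=-2/P_{\mathbb{H}}^{(2)}(\rho_0)$, not $+2/P_{\mathbb{H}}^{(2)}(\rho_0)$; this sign does not affect your conclusion, only which of the two angles is attained.
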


\begin{proof}
    Again, we will utilize the harmonic measure. Towards this goal, fix $n\in\mathbb{N}$ and consider the curve $\gamma_n:[0,1)\to\mathbb{H}$ with $\gamma_n(t)=f(k_n\sqrt{1-t},t)$. As before, $\lim_{t\to1^-}\gamma_n(t)=\rho_1$. This time, $h$ maps the upper half-plane $\mathbb{H}$ onto a horizontal half-plane minus a sequence of horizontal half-lines stretching to $\infty$ in the positive direction. To be more formal, set $L_j=\{z\in\mathbb{C}:\text{Re}z\ge\text{Re}h(k_j), \text{Im}z=\text{Im}h(k_j)\}$. In our first case, we assume that there exists some $a>\text{Im}h(k_j)$, for all $j\in\mathbb{N}$ such that $\Omega:=h(\mathbb{H})=\{z\in\mathbb{C}:\text{Im}z<a\}\setminus\bigcup_{j=1}^{+\infty}L_j$. By the previous lemma, we may see that there exists $R\in\mathbb{R}$ such that $\text{Re}h(k_j)>R$, for all $j\in\mathbb{N}$. Arguing as before, the curve $\gamma_n$ separates the prime ends of $\Omega$ into two connected components $\partial\Omega^+$ and $\partial\Omega^-$, where $\partial\Omega^+$ consists of the prime ends corresponding to the horizontal line $L:=\{z\in\mathbb{C}:\text{Im}z=a\}$, the prime ends corresponding to $\bigcup_{j\in\mathbb{N}}\{L_j:\text{Im}h(k_j)>\text{Im}h(k_n)\}$ and the prime ends corresponding to the half-line $L_n$ defined by crosscuts with imaginary parts larger than $\text{Im}h(k_n)$. Naturally, $\partial\Omega^-$ consists of all the remaining prime ends. 
    Our objective is to prove that $\lim_{t\to1^-}\omega(h(\gamma_n(t)),\partial\Omega^+,\Omega)=1$. An important note towards this direction is the fact that $h\circ\gamma_n([0,1))=\{z\in\mathbb{C}:\text{Re}z<\text{Re}h(k_n),\text{Im}z=\text{Im}h(k_n)\}$, with $\lim_{t\to1^-}\text{Re}h(\gamma_n(t))=-\infty$.

    \begin{figure}
        \centering
        \includegraphics[scale=0.55]{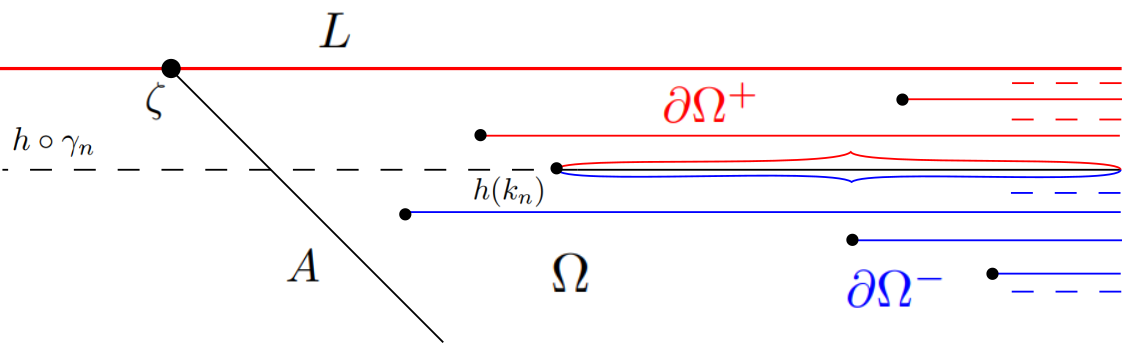}
        \caption{The sets of Proposition \ref{tangential}}
        \label{fig:tang}
    \end{figure}
    
    The existence of the real number $R$ that bounds the real parts of all half-lines, allows us to proceed to the following construction: we may find some point $\zeta\in L$ and a ray $A$ emanating from $\zeta$ such that $A\setminus\{\zeta\}\subset\Omega$ (an obvious example is a point $\zeta$ that rests sufficiently to the left and the half-line $A$ that is perpendicular to $L$ at $\zeta$). Set $L^+=\{z\in L:\text{Re}z\le\text{Re}\zeta\}$ and $L^-=L\setminus L^+$. Then, the angular simply connected domain $U$ bounded by $L^+$ and $A$ is contained inside $\Omega$ (for the whole construction see Figure \ref{fig:tang}). Moreover, it is easy to see that $U$ eventually contains the curve $h\circ\gamma_n$ or equivalently there exists some $t_0\in[0,1)$ such that $h(\gamma_n(t))\in U$, for all $t\in[t_0,1)$. Since $\partial\Omega^+\supset L^+$ and $\Omega\supset U$, we are led to
    $$\omega(h(\gamma_n(t)),\partial\Omega^+,\Omega)\ge\omega(h(\gamma_n(t)),L^+,\Omega)\ge\omega(\gamma_n(t),L^+,U),$$
    for all $t\in[t_0,1)$. By the construction of the ray $A$, there exists some $\beta\in(\pi,2\pi)$ such that
    \begin{eqnarray*}
        \lim\limits_{t\to1}\omega(h(\gamma_n(t)),\partial\Omega^+,\Omega)&\ge&\lim\limits_{t\to1}\omega(h(\gamma_n(t)),L^+,U)\\
        &\ge&\lim\limits_{t\to1}\omega(h(\gamma_n(t))-\zeta, \{w\in\mathbb{C}:\arg w=\pi\},U_{\pi,\beta})\\
        &=&\lim\limits_{t\to1}\frac{\beta-\arg(h(\gamma_n(t))-\zeta)}{\beta-\pi}\\
        &=&1,
    \end{eqnarray*}
    since $\lim_{t\to1}\arg h(\gamma_n(t))-\zeta=\lim_{t\to1}\arg h(\gamma_n(t))=\pi$. However, the harmonic measure has $1$ as an upper bound and as a result
    $$\lim\limits_{t\to1}\omega(h(\gamma_n(t)),\partial\Omega^+,\Omega)=1=\frac{\pi}{\pi}.$$
    Using again a Riemann map of $\Omega$ onto the unit disk, Remark \ref{cara} and a M\"{o}bius transformation to return to $\mathbb{H}$, we see that each $\gamma_n$ coverges to $\rho_1$ by angle $0$ and thus, tangentially.
    Finally, if at the start, our domain was of the form $\Omega:=\{z\in\mathbb{C}:\text{Im}z>a\}\setminus\bigcup_{j=1}^{+\infty}L_j$, then each curve $\gamma_n$ would converge to $\rho_1$ by angle $\pi$, again tangentially.
\end{proof}

\subsection{Orthogonal intersections.} In the final case, we assume that $P_{\mathbb{H}}$ has a triple root $\rho_0\in\mathbb{R}$, which means according to the preliminary analysis, that $\rho_0$ is a double root coinciding with some $\lambda_{n_0}$, hence triple. Integrating in $(\ref{differential})$ and using $(\ref{double root})$, we deduce that $h(v(z,t))=-\frac{1}{2}\log(1-t)+h(z)$, where
\begin{equation}
    \label{koenigs4}
   h(z)=\sum_{n\neq n_0}A_n\log\dfrac{z-\lambda_{n}}{z_0-\lambda_n}+(1-\sum_{n\neq n_0}\log(z-\rho_0)+\dfrac{B}{z-\rho_0}+\dfrac{C}{(z-\rho_0)^2}.
\end{equation}
where the parameters $A_n<0$, $B$ and $C>0$ are given by $(\ref{triple root})$. By Lemma \ref{analyticityofh} and Proposition \ref{univalenceofh3}, $h$ is analytic and univalent in the upper half-plane.

\begin{lemma}\label{boundnessofh4} Assume that $h$ is given by $(\ref{koenigs4})$. Then it maps the upper half-plane onto the complement of inifinitely many horizontal half-lines and we have that:
    \begin{enumerate}
        \item[\textup{(1)}] $\mathrm{Re}h(x)\rightarrow+\infty$, as $x\rightarrow\pm\infty$ and
        \item[\textup{(2)}] there exists some $Q>0$, so that $|\mathrm{Im}h(k_n)|<Q$, for all $n\in\mathbb{N}$.
    \end{enumerate}
\end{lemma}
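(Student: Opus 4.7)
The plan is to proceed in close analogy with the proof of Lemma \ref{boundnessofh3}, using the M\"{o}bius transform $T(z)=\frac{1}{\rho_0-z}:\mathbb{H}\to\mathbb{H}$ and the auxiliary function $g(x):=h(T^{-1}(x))=h\bigl(\frac{\rho_0 x-1}{x}\bigr)$, so that the limits $\lim_{x\to\pm\infty}h(x)$ become the limit $\lim_{x\to 0}g(x)$. Substituting $T^{-1}(x)-\rho_0=-\frac{1}{x}$ and $T^{-1}(x)-\lambda_n=\frac{(\rho_0-\lambda_n)x-1}{x}$ into \eqref{koenigs4}, the extra term $\frac{C}{(z-\rho_0)^2}$ contributes $Cx^2$ (which is bounded near $0$), the term $\frac{B}{z-\rho_0}$ contributes $-Bx$, and the remaining terms are structurally identical to those treated in the proof of Lemma \ref{boundnessofh3}. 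Hence the computation of $\operatorname{Re}g(x)$ yields an expression dominated by $-\log|x|$ as $x\to 0$, which proves that $\operatorname{Re}h(x)\to+\infty$ as $x\to\pm\infty$, establishing (1). In addition, since $C>0$, the term $\frac{C}{(x-\rho_0)^2}$ forces $\operatorname{Re}h(x)\to+\infty$ as $x\to\rho_0^\pm$ as well; note the contrast with Lemma \ref{boundnessofh3}, where one side gave $-\infty$.

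For part (2), the imaginary part of $h$ on $\mathbb{R}\setminus\{\rho_0,\lambda_n\}_{n\neq n_0}$ has exactly the same step-function form as in \eqref{image of h3}, namely
$$\operatorname{Im}h(x)=\sum_{n\neq n_0}A_n\operatorname{Arg}(x-\lambda_n)+\Bigl(1-\sum_{n\neq n_0}A_n\Bigr)\operatorname{Arg}(x-\rho_0)+\text{const},$$
because the two additional real-singular terms $\frac{B}{x-\rho_0}$ and $\frac{C}{(x-\rho_0)^2}$ are real on the real line. Since $(A_n)_{n\neq n_0}$ is absolutely summable, the partial sums $\sum_{\lambda_n>k_m}A_n\pi$ are uniformly bounded in $m$, so there is a constant $Q>0$ with $|\operatorname{Im}h(k_n)|<Q$ for every $n\in\mathbb{N}$, as claimed.

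For the image description, univalence of $h$ is already known from Proposition \ref{univalenceofh3}, so it suffices to read off $h(\mathbb{R})$ from the formulas for $\operatorname{Re}h$ and $\operatorname{Im}h$ above. On each interval $(\lambda_m,\lambda_{m'})$ not touching $\rho_0$, $\operatorname{Im}h$ is constant and $\operatorname{Re}h$ traces a half-line going to $+\infty$, with tip $h(k_j)$ for the corresponding $k_j\in(\lambda_m,\lambda_{m'})$. The only new point compared to Lemma \ref{boundnessofh3} is the behaviour across $\rho_0$: because $\operatorname{Re}h(x)\to+\infty$ on \emph{both} sides of $\rho_0$ (rather than $-\infty$ on one side and $+\infty$ on the other), the "boundary line" of the half-plane that appeared in the double-root case is no longer part of $h(\mathbb{R})$; instead, the two horizontal rays emanating from the limits on either side of $\rho_0$ join up with the rest of the half-line family, so that $h(\mathbb{H})$ is the complement of the full collection of horizontal half-lines in $\mathbb{C}$.

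The step I expect to require the most care is the uniform-in-$n$ analysis of the infinite sum $\sum_{n\neq n_0}A_n\log\!\bigl|\tfrac{\rho_0-\lambda_n}{z_0-\lambda_n}(x-T(\lambda_n))\bigr|$ near $x=0$, since the sequence $(T(\lambda_n))_{n\neq n_0}$ accumulates precisely at $0$. As in Lemma \ref{boundnessofh3}, this is handled by splitting the index set into a large finite part (treated term-by-term) and a tail (where $|x-T(\lambda_n)|\approx|x|$ uniformly), combined with the comparability observation that ultimately exhibits the dominant $-\log|x|$ singularity. Once this is in hand, everything else is routine bookkeeping.
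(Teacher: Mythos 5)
Your proposal is correct and follows essentially the same route as the paper, which itself simply defers to the argument of Lemma \ref{boundnessofh3} and records the one genuine difference: the term $\frac{C}{(z-\rho_0)^2}$ with $C>0$ forces $\mathrm{Re}\,h(x)\to+\infty$ on \emph{both} sides of $\rho_0$, so the boundary line of the half-plane disappears and the image becomes the complement of the full family of horizontal half-lines. Your treatment of the substitution $T^{-1}(x)-\rho_0=-\tfrac{1}{x}$, the dominant $-\log|x|$ singularity, and the uniform bound on the step values of $\mathrm{Im}\,h$ matches the paper's (abbreviated) reasoning.
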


\begin{figure}
    \centering
    \includegraphics[scale=0.4]{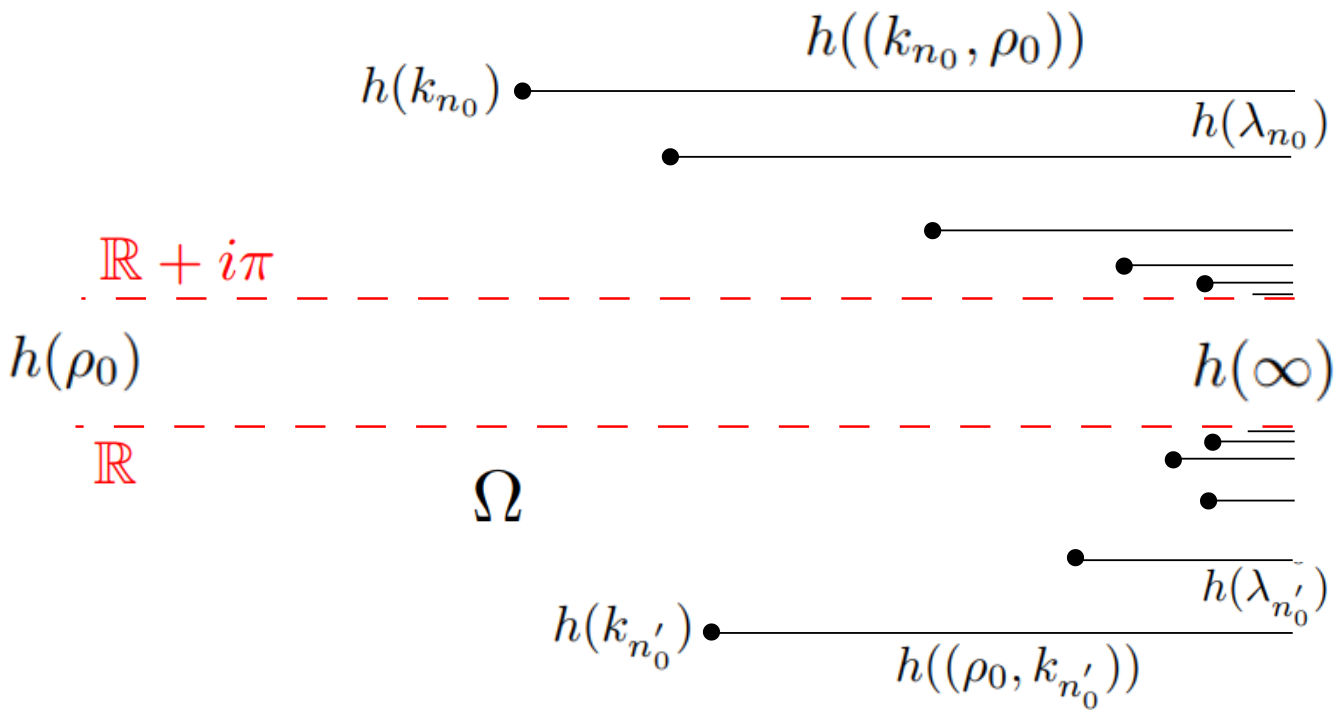}
    \caption{The image of $h$, when $\rho_0$ is a triple root.}
    \label{fig:zerostep}
\end{figure}

\begin{proof}
The proof of the lemma follows an identical procedure to the lemma of the preceding section, albeit with minor modifications. For the sake of brevity, we skip this proof, but we only note some adjustments. We write, without loss of generality, $k_{n_0}<\lambda_{n_0}=\rho_0<k_{n_0'}$. Through the fact that $\mathrm{Re}h(x)$ equals
     $$\sum_{n\neq n_0}A_n\log\left|\frac{x-\lambda_n}{z_0-\lambda_n}\right|+\dfrac{(1-\sum_{n\neq n_0}A_n)\log\abs{x-\rho_0}(x-\rho_0)^2+B(x-\rho_0)+C}{(x-\rho_0)^2}$$
     we see that $\mathrm{Re}h(x)\rightarrow+\infty$, as $x\rightarrow\rho_0$, while by $(\ref{image of h3})$ we see that $h((k_{n_0},\rho_0))$ and $h((\rho_0,k_{n_0'}))$ are the half-lines lying above and below all other half-lines, respectively, as we see in Figure \ref{fig:zerostep}.
    \end{proof}
    Again, we see that the strip $\{0<\mathrm{Im}z<\pi\}$ contains no half-lines, while the tip points accumulate on the boundary of the strip and the point at infinity towards the right. In addition, the Loewner flow is the same as in the previous case. Finally, the following proposition allows us to see that the trajectories of $f(k_n\sqrt{1-t},t)$ collide to the real line, orthogonally at the point $\rho_0$, for all $n\ge1$.

\begin{proposition}\label{orthogonal}
    For each $n\in\mathbb{N}$, the trace $\hat{\gamma}_n:=\{f(k_n\sqrt{1-t},t):t\in[0,1)\}$ is a smooth curve intersecting the real line orthogonally at the root $\rho_0$ (i.e. with angle $\frac{\pi}{2}$).
\end{proposition}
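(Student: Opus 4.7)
We follow the harmonic-measure strategy of Propositions \ref{non-tangential} and \ref{tangential}. Fix $n\in\mathbb{N}$ and recall from the conjugation formula preceding Proposition \ref{tangential} that
\[
h(\gamma_n(t)) = h(k_n) + \tfrac{1}{2}\log(1-t),
\]
so $h\circ\gamma_n$ traces the horizontal ray at height $s := \mathrm{Im}\,h(k_n)$ leftward, with $\arg h(\gamma_n(t))\to\pi$ and $|h(\gamma_n(t))|\to+\infty$ as $t\to 1^-$. By the explicit harmonic-measure formula of Subsection 2.2, $\omega(z,[\rho_0,\rho_0+1],\mathbb{H}) = \frac{1}{\pi}\arg\frac{z-\rho_0-1}{z-\rho_0} \to 1 - \arg(z-\rho_0)/\pi$ as $z\to\rho_0$ in $\mathbb{H}$, so orthogonal intersection of $\hat{\gamma}_n$ with $\mathbb{R}$ at $\rho_0$ reduces to showing
\[
\arg(\gamma_n(t)-\rho_0) \to \pi/2 \qquad\text{as } t\to 1^-.
\]

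The crucial ingredient is the local structure of $h$ at $\rho_0$ provided by (\ref{koenigs4}): the pole is of order $2$ with leading Laurent coefficient $C > 0$, while $B/(z-\rho_0)$, $(1-\sum_{n\neq n_0}A_n)\log(z-\rho_0)$ and the bounded piece are all of strictly lower order. Hence
\[
h(z) = \frac{C}{(z-\rho_0)^2}\bigl(1+o(1)\bigr) \qquad \text{as } z\to\rho_0,
\]
and inverting on the appropriate sheet yields
\[
\gamma_n(t) - \rho_0 = \sqrt{C/w_t}\,(1+o(1)), \qquad w_t := h(k_n) + \tfrac{1}{2}\log(1-t),
\]
where the branch of the square root is pinned down by the continuity of $\gamma_n$ inside $\mathbb{H}$. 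Since $\arg w_t \to \pi$, the only branch compatible with $\gamma_n(t)\in\mathbb{H}$ gives $\arg\sqrt{C/w_t}\to\pi/2$, which delivers the desired limit and hence the orthogonal intersection.

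Smoothness of $\hat{\gamma}_n$ follows because it is the image under the biholomorphism $h^{-1}$ of the real-analytic horizontal ray $\{h(k_n)+u : u\le 0\}$, which lies in $\Omega$ save for its endpoint at infinity. The main technical obstacle is verifying that the sub-dominant terms in (\ref{koenigs4}) do not shift the limiting argument away from $\pi/2$; this reduces to the elementary estimate $|h(z)-C(z-\rho_0)^{-2}| = O(|z-\rho_0|^{-1})$ as $z\to\rho_0$, together with a continuity argument to propagate the correct branch of $\sqrt{\cdot}$ along the trajectory of $\gamma_n$ from some starting value deep in $\mathbb{H}$ all the way to the boundary point $\rho_0$.
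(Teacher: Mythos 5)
Your argument is correct, but it takes a genuinely different route from the paper. The paper works entirely in the image domain $\Omega=h(\mathbb{H})=\mathbb{C}\setminus\bigcup_j L_j$: it splits the prime ends of $\Omega$ into $\partial\Omega^{\pm}$, bounds $\omega(h(\gamma_n(t)),\partial\Omega^{\pm},\Omega)$ from above by harmonic measures in the two Koebe-type comparison domains $\mathbb{C}\setminus L_{n_1}$ and $\mathbb{C}\setminus L_{n_2}$ (each computed explicitly via a square-root map), concludes that both limits equal $\tfrac{1}{2}$, and then invokes Remark \ref{cara} to convert this into the angle $\tfrac{\pi}{2}$. You instead read the angle off a local expansion of $h$ at the pole $\rho_0$: the dominant term of (\ref{koenigs4}) is $C(z-\rho_0)^{-2}$ with $C>0$ (indeed $C=-3/P_{\mathbb{H}}^{(3)}(\rho_0)$ and $P_{\mathbb{H}}^{(3)}<0$ on $\mathbb{R}$), while $w_t=h(k_n)+\tfrac{1}{2}\log(1-t)$ satisfies $|\arg w_t|\to\pi$, so inverting forces $\arg(\gamma_n(t)-\rho_0)\to\tfrac{\pi}{2}$. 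This is sound; in fact the branch selection is even easier than you make it, since for $t$ close to $1$ only one of the two square roots of $C w_t^{-1}(1+o(1))$ lies in $\mathbb{H}$, so no propagation of the branch from the interior is needed (and the asymptotic $h(z)\sim C(z-\rho_0)^{-2}$ is anyway only valid near $\rho_0$, not ``deep in $\mathbb{H}$''). Your method buys an explicit rate, $\gamma_n(t)-\rho_0\sim i\,(2C/\log\tfrac{1}{1-t})^{1/2}$, which the harmonic-measure argument does not give; what the paper's softer argument buys is a single technique that treats the non-tangential, tangential and orthogonal cases uniformly without ever inverting $h$ locally. One point to make explicit in either approach: the claim $\gamma_n(t)\to\rho_0$ as $t\to1^-$ (needed before the local expansion, or the prime-end decomposition, applies) is used without proof; it follows from Lemma \ref{boundnessofh4} because $\rho_0$ is the only point of $\partial\mathbb{H}\cup\{\infty\}$ near which $\mathrm{Re}\,h$ can tend to $-\infty$, and is worth recording.
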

\begin{proof}
    Fix $n\in\mathbb{N}$ and set $L_j=\{z\in\mathbb{C}:\text{Re}z\ge\text{Re}h(k_j),\text{Im}z=\text{Im}h(k_j)\}$. This time, (\ref{koenigs4}) dictates that $\Omega:=h(\mathbb{H})=\mathbb{C}\setminus\bigcup_{j=1}^{+\infty}L_j$. In a similar fashion as in the previous cases, we denote by $\partial\Omega^+$ the prime ends of $\Omega$ corresponding to $\bigcup_{j\in\mathbb{N}}\{L_j:\text{Im}h(k_j)>\text{Im}h(k_n)\}$ along with the prime ends corresponding to the half-line $L_n$ and defined by crosscuts with imaginary parts larger than $\text{Im}h(k_n)$. Once more, we denote by $\partial\Omega^+$ the set of the remaining prime ends. Recall that $h\circ\gamma_n([0,1))=\{z\in\mathbb{C}:\text{Re}z<\text{Re}h(k_n),\text{Im}z=\text{Im}h(k_n)\}$ and $\lim_{t\to1^-}\text{Re}h(\gamma_n(t))=-\infty$. Our aim is to prove that
    $$\lim\limits_{t\to1}\omega(h(\gamma_n(t)),\partial\Omega^+,\Omega)=\lim\limits_{t\to1}\omega(h(\gamma_n(t)),\partial\Omega^-,\Omega)=\frac{1}{2}.$$ 
    By the previous lemma, we see that there exist $n_1,n_2\in\mathbb{N}$ such that $\text{Im}h(k_{n_1})\le\text{Im}h(k_j)\le\text{Im}h(k_{n_2})$, for all $j\in\mathbb{N}$. So, denote by $L^+\subset\partial\Omega^+$ the prime ends corresponding to the half-line $L_{n_2}$ and defined by crosscuts with imaginary parts larger than $\text{Im}h(k_{n_2})$. On the other side. denote by $L^-\subset\partial\Omega^-$ the prime ends corresponding to $L_{n_1}$ and defined by crosscuts with imaginary parts smaller than $\text{Im}h(k_{n_1})$. As in the tangential case, there exists some $R\in\mathbb{R}$ such that $\text{Re}h(k_j)\ge R$, for all $j\in\mathbb{N}$. Therefore, through the conformal invariance of the harmonic measure and a mapping of $\Omega$ onto $\mathbb{H}$ that maps $h(k_n)$ to $0$ and fixes infinity, we may see that
    $$\lim\limits_{t\to1}\omega(h(\gamma_n(t)),\partial\Omega^+\setminus L^+,\Omega)=\lim\limits_{t\to1}\omega(h(\gamma_n(t)),\partial\Omega^-\setminus L^-,\Omega)=0.$$
    
    \begin{figure}
        \centering
        \includegraphics[scale=0.43]{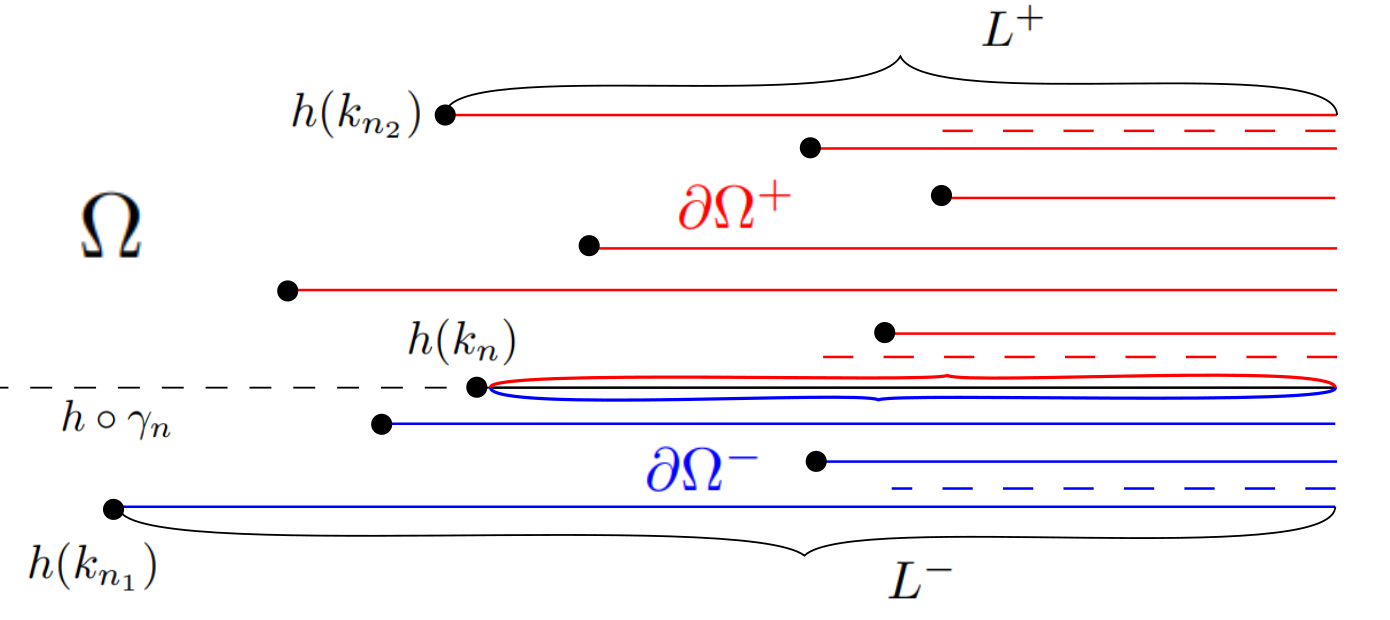}
        \caption{The sets of Proposition \ref{orthogonal}}
        \label{fig:orth}
    \end{figure}

    We will use two auxiliary ``Koebe-like'' domains to reach the desired conclusions. We set $\Omega_1=\mathbb{C}\setminus L_{n_1}$ and $\Omega_2=\mathbb{C}\setminus L_{n_2}$. Trivially, both of them are supersets of $\Omega$. In particular, in the sense of prime ends, $L^-\subset\partial\Omega\cap\partial\Omega_1$ and $L^+\subset\partial\Omega\cap\partial\Omega_2$ (see Figure \ref{fig:orth}). By the domain monotonicity property of harmonic measure,
    $$\lim\limits_{t\to1}\omega(h(\gamma_n(t)),L^-,\Omega_1)\ge\lim\limits_{t\to1}\omega(h(\gamma_n(t)),L^-,\Omega)=\lim\limits_{t\to1}\omega(h(\gamma_n(t)),\partial\Omega^-,\Omega).$$
    Through a chain of conformal mappings, we are going to estimate the harmonic measure with regard to the domain $\Omega_1$. As a matter of fact,
    \begin{eqnarray*}
        \lim\limits_{t\to1}\omega(h(\gamma_n(t)),L^-,\Omega_1)&=&\lim\limits_{t\to1}\omega(h(\gamma_n(t))-h(k_{n_1}),L^--h(k_{n_1}),\Omega_1-h(k_{n_1}))\\
        &=&\lim\limits_{t\to1}\omega(-h(\gamma_n(t))+h(k_{n_1}),-L^-+h(k_{n_1}),\mathbb{C}\setminus(-\infty,0])\\
        &=&\lim\limits_{t\to1}\omega(\sqrt{-h(\gamma_n(t))+h(k_{n_1})},\{z\in\mathbb{C}:\arg z=-\frac{\pi}{2}\},-i\mathbb{H})\\
        &=&\lim\limits_{t\to1}\frac{\arg\sqrt{-h(\gamma_n(t))+h(k_{n_1})}+\frac{\pi}{2}}{\pi}\\
        &=&\frac{1}{2}.
    \end{eqnarray*} 
    Combining with the inequality above, we get $\lim_{t\to1}\omega(h(\gamma_n(t)),\partial\Omega^-,\Omega)\le\frac{1}{2}$. Following an almost identical procedure, but this time with the Koebe domain $\Omega_2$, we may prove that $\lim_{t\to1}\omega(h(\gamma_n(t)),\partial\Omega^+,\Omega)\le\frac{1}{2}$ as well. However, since $\partial\Omega=\partial\Omega^+\cup\partial\Omega^-$, it is obvious that
    $$\omega(h(\gamma_n(t)),\partial\Omega^-,\Omega)+\omega(h(\gamma_n(t)),\partial\Omega^+,\Omega)=1,$$
    for all $t\in[0,1)$. So, it follows that
    $$\lim\limits_{t\to1}\omega(h(\gamma_n(t)),\partial\Omega^-,\Omega)=\lim\limits_{t\to1}\omega(h(\gamma_n(t)),\partial\Omega^+,\Omega)=\frac{1}{2}.$$
    Lastly, Remark \ref{cara} once again implies the desired result.
    \end{proof}

To conclude, the combination of all the lemmas and propositions of this section provides the proof of Theorem \ref{chordaltheorem}.

\section*{Acknowledgements}

We would like to thank Alan Sola for his advice and careful consideration during the preparation of the current work. We also thank Dimitrios Betsakos for the helpful conversations.

\end{document}